\author{Pramod N. Achar}
\address{Department of Mathematics\\
  Louisiana State University\\
  Baton Rouge, LA 70803\\
  U.S.A.}
\email{pramod.achar@math.lsu.edu}
\author{Tamanna Chatterjee}
\address{Department of Mathematics\\
  University of Notre Dame\\
  255 Hurley Bldg.\\
  Notre Dame, IN 46556\\
  U.S.A}
\email{tchatter@nd.edu}
\thanks{The first author was supported by NSF Grant No.~DMS-2202012.}
\subjclass{14M15; 32S60}
\title{Equivariant sheaves for classical groups acting on Grassmannians}
\newcommand{\Z}{\mathbb{Z}}
\newcommand{\C}{\mathbb{C}}
\newcommand{\Q}{\mathbb{Q}}
\newcommand{\bk}{\Bbbk}
\newcommand{\ubk}{\underline{\bk}}
\newcommand{\Gm}{\mathbb{G}_{\mathrm{m}}}
\newcommand{\cL}{\mathcal{L}}
\newcommand{\Hc}{\mathsf{H}_{\mathrm{c}}}
\newcommand{\cB}{\mathcal{B}}
\newcommand{\cF}{\mathcal{F}}
\newcommand{\cN}{\mathcal{N}}
\newcommand{\Lie}{\mathrm{Lie}}
\newcommand{\mc}{\mathcal}
\newcommand{\mb}{\mathbb}
\newcommand{\lag}{\langle}
\newcommand{\rag}{\rangle}
\newcommand{\eal}{\end{align*}}
\newcommand{\bal}{\begin{align*}}
\newcommand{\GL}{\mathrm{GL}}
\newcommand{\Sp}{\mathrm{Sp}}
\newcommand{\SO}{\mathrm{SO}}
\newcommand{\rO}{\mathrm{O}}
\newcommand{\Iso}{\mathrm{Iso}}
\newcommand{\Isoc}{\mathrm{Iso}^\circ}
\newcommand{\GIsoc}{\mathrm{GIso}^\circ}
\newcommand{\pr}{\mathrm{pr}}
\newcommand{\iso}{{\mathrm{iso}}}
\newcommand{\cO}{\mathcal{O}}
\newcommand{\cE}{\mathcal{E}}
\newcommand{\cC}{\mathcal{C}}
\newcommand{\Gr}{\mathrm{Gr}}
\newcommand{\tC}{\widetilde{\cC}}
\newcommand{\hC}{\widehat{\cC}}
\newcommand{\eC}{E}
\newcommand{\uk}{\underline{k}}
\newcommand{\ur}{\underline{r}}
\newcommand{\hX}{\widehat{X}}
\newcommand{\hF}{\widehat{F}}
\newcommand{\hmu}{\widehat{\mu}}
\newcommand{\D}{\mathbb{D}}
\newcommand{\cH}{\mathcal{H}}
\newcommand{\fgl}{\mathfrak{gl}}
\newcommand{\fp}{\mathfrak{p}}
\numberwithin{equation}{section}
\newtheorem{thm}{Theorem}[section]
\newtheorem{lem}[thm]{Lemma}
\newtheorem{prop}[thm]{Proposition}
\newtheorem{cor}[thm]{Corollary}
\theoremstyle{definition}
\theoremstyle{remark}
\newtheorem{rmk}[thm]{Remark}
\DeclareMathOperator{\ind}{ind}
 \DeclareMathOperator{\Stab}{Stab}
\DeclareMathOperator{\rhm}{R\mathscr{H}\text{\kern -3pt{\calligra\large om}}\, }
\DeclareMathOperator{\Hom}{Hom}
\DeclareMathOperator{\End}{End}
\DeclareMathOperator{\spn}{span}
\newcommand{\tra}{\dag}
\newcommand{\coh}{\mathsf{H}}
\newcommand{\Db}{D^{\mathrm{b}}}
\DeclareMathOperator{\rad}{rad}
\newcommand{\sgn}{\mathrm{sgn}}
\newcommand{\simto}{\overset{\sim}{\to}}
\newcommand{\id}{\mathrm{id}}
\begin{document}

\begin{abstract}
Let $V$ be a finite-dimensional complex vector space.  Assume that $V$ is a direct sum of subspaces each of which is equipped with a nondegenerate symmetric or skew-symmetric bilinear form.  In this paper, we introduce a stratification of the Grassmannian $\Gr_k(V)$ related to the action of the appropriate product of orthogonal and symplectic groups, and we study the topology of this stratification.  The main results involve sheaves with coefficients in a field of characteristic other than $2$.  We prove that there are ``enough'' parity sheaves, and that the hypercohomology of each parity sheaf also satisfies a parity-vanishing property.

This situation arises in the following context: let $x$ be a nilpotent element in the Lie algebra of either $G = \Sp_N(\C)$ or $G = \SO_N(\C)$, and let $V = \ker x \subset \C^N$.  Our stratification of $\Gr_k(V)$ is preserved by the centralizer $G^x$, and we expect our results to have applications in Springer theory for classical groups.
\end{abstract}

\maketitle

\section{Introduction}

\subsection{Overview and statement of the main results}
\label{ss:overview}

Parity sheaves, introduced by Juteau--Mautner--Williamson in~\cite{jmw:ps}, have become a powerful tool in modular geometric representation theory.  One especially important case is that of flag varieties and their variants (such as affine or partial flag varieties, including Grassmannians) equipped with the Schubert stratification.  In this setting, parity sheaves give rise to the notion of the \emph{$p$-canonical basis} of the Hecke algebra, and ultimately to a number of major advances of the past decade, including~\cite{amrw:kdkmg, ar:psag, rw:tmpcb, rw:stt}.

The goal of this paper is to study parity sheaves on Grassmannians equipped with various stratifications \emph{other} than the Schubert stratification.  These stratifications, defined in terms of the action of classical groups, are motivated by an anticipated future application to Springer theory.  This application will be discussed at the end of the introduction.

Let us introduce the notation needed to formulate the main results.  Let $B$ be a finite-dimensional complex vector space equipped with a decomposition
\[
B = B_1 \oplus \cdots \oplus B_m.
\]
(On a first reading, the reader may wish to assume that $m = 1$.)  Suppose that each $B_i$ is equipped with a nondegenerate bilinear form $\lag{-},{-}\rag_{B_i}$ that is either symmetric or skew-symmetric.  Let $\Iso(B_i) \subset \GL(B_i)$ be the subgroup that preserves $\lag{-},{-}\rag_{B_i}$, and let $\Isoc(B_i) \subset \Iso(B_i)$ be its identity component.  Thus
\[
\Isoc(B_i) = 
\begin{cases}
\SO(B_i) & \text{if $\lag{-},{-}\rag_{B_i}$ is symmetric,} \\
\Sp(B_i) & \text{if $\lag{-},{-}\rag_{B_i}$ is skew-symmetric.}
\end{cases}
\]
Let
\[
I_B = \Isoc(B_1) \times \cdots \times \Isoc(B_m) \subset \GL(B).
\]
In Section~\ref{sec:dsums}, we will introduce larger group $Q_B \subset \GL(B)$ that has the form
\[
Q_B = I_B \ltimes (\text{a normal connected unipotent group}).
\]
(If $m = 1$, the unipotent part is trivial, and we have $Q_B = I_B = \Isoc(B_1)$.)

Let $k$ be an integer with $0 \le k \le \dim B$, and consider the Grassmannian $\Gr_k(B)$ of $k$-dimensional subspaces of $V$.  The groups $Q_B$ and $I_B$ act on $\Gr_k(B)$.  We can consider the equivariant derived category
\[
\Db_{Q_B}(\Gr_k(B),\bk)
\]
of sheaves of $\bk$-modules, where $\bk$ is a field (more general rings will be allowed in the body of the paper).

The main results of the paper are summarized in items~\eqref{it:classify}--\eqref{it:alter-exist} below:
\begin{enumerate}
\item We classify the $Q_B$-orbits on $\Gr_k(B)$.\label{it:classify}
\item We determine the $Q_B$-equivariant fundamental group of each $\Gr_k(B)$-orbit.\label{it:classify-locsys}
\end{enumerate}
It turns out that each such fundamental group is a product of copies of $\Z/2\Z$.  As a consequence, if the characteristic of $\bk$ is not $2$, then the category of $Q_B$-equivariant $\bk$-local systems on each orbit is semisimple.
\begin{enumerate}
\setcounter{enumi}{2}
\item For each $Q_B$-orbit $\cO \subset \Gr_k(B)$ and each irreducible $Q_B$-equivariant $\bk$-local system $\cL$ on $\cO$, we construct an indecomposable parity sheaf\label{it:par-exist}
\[
\cE(\cO,\cL)
\]
that is supported on $\overline{\cO}$ and satisfies $\cE(\cO,\cL)|_\cO \cong \cL[\dim \cO]$.
\item We prove that the hypercohomology of parity sheaves satisfies:\label{it:par-cohom}
\[
\coh^i(\Gr_k(\cB), \cE(\cO,\cL)) = 0
\qquad\text{if $i \not\equiv \dim \cO \pmod 2$.}
\]
\end{enumerate}

For item~\eqref{it:par-exist}, once we have constructed $\cE(\cO,\cL)$, the general theory from~\cite{jmw:ps} tells us that it is unique up to isomorphism.  The construction requires some auxiliary results, including the following:
\begin{enumerate}\setcounter{enumi}{4}
\item For each $Q_B$-orbit $\cO \subset \Gr_k(B)$, we construct a resolution of singularities $\mu: X \to \overline{\cO}$ whose fibers have cohomology only in even degrees.\label{it:resoln-exist}
\end{enumerate}
In the terminology of~\cite{jmw:ps}, these are called ``even resolutions.''  These resolutions are already enough to prove the existence of parity sheaves $\cE(\cO,\ubk)$ associated to \emph{constant} local systems.  But when $\cO$ admits nontrivial local systems, more is needed.
\begin{enumerate}\setcounter{enumi}{5}
\item For each $Q_B$-orbit $\cO \subset \Gr_k(B)$, we construct a proper map $\hmu: \hX \to \overline{\cO}$ with the following properties: (i)~$\hX$ is smooth; (ii)~the fibers of $\hmu$ have cohomology only in even degrees; (iii)~$\hmu^{-1}(\cO)$ is simply connected; (iv)~$\hmu: \hmu^{-1}(\cO) \to \cO$ is a smooth morphism.\label{it:alter-exist}
\end{enumerate}
Parts (iii) and (iv) of this statement are the key to constructing $\cE(\cO,\cL)$ for nontrivial $\cL$.  One could ask whether part~(iv) could be improved to say that $\hmu: \hmu^{-1}(\cO) \to \cO$ is \'etale (so that $\hmu$ is an alteration).  We do not know whether such a statement is true.

\subsection{Motivation}

The present paper is motivated by an anticipated application to \emph{Mautner's cleanness conjecture}.  Before explaining this connection, we review some background on Springer theory.

In Lusztig's work~\cite{lus:icc} on the generalized Springer correspondence (with $\overline{\Q}_\ell$-coefficients), one key result states that \emph{cuspidal} simple perverse sheaves are \emph{clean}, i.e., their stalks vanish outside of one orbit.  The cleanness phenomenon plays a crucial role in the computation of stalks of perverse sheaves on the nilpotent cone~\cite{lus:cs5}, and in the theory of character sheaves.

About ten years ago, Mautner conjectured that these cuspidal perverse sheaves remain clean when they are reduced modulo $p$, with some possible exceptions for very small $p$.  Mautner's conjecture has been verified in type $A$ and the exceptional types, and for a few small-rank examples in types $B$, $C$, and $D$.  In general, it is currently open for classical groups outside of type $A$.  

Here is a sketch of the kind of computation that goes into checking Mautner's conjecture by hand.  Suppose $G = \Sp_{2N}$, and let $P$ be a parabolic subgroup with Levi factor $L = \GL_k \times \Sp_{2N-2k}$.  Suppose $\cF$ is a perverse sheaf on the nilpotent cone of $\Sp_{2N-2k}$, and identify the latter with a subset of the nilpotent cone of $L$.  Then we can consider the parabolic induction $\ind_{L \subset P}^G \cF$, a perverse sheaf on the nilpotent cone $\cN_G$ of $G$.  We might wish to compute the stalk
\[
(\ind_{L \subset P}^G \cF)|_x
\qquad\text{at a point $x \in \cN_G$.}
\]

To do this, we need some additional input.  Let $G^x \subset \Sp_{2N}$ be the stabilizer of $x$ under the adjoint action, and let $G^{x,\circ}$ be its identity component.  An explicit description of the reductive part of $G^x$ or $G^{x,\circ}$ can be found in the classic textbook~\cite{cm:nosla}, and in many other sources as well: we have
\[
G^{x,\circ} = I_B \ltimes (\text{a normal connected unipotent group})
\qquad
\text{where $B = \ker x$.}
\]
Here, the decomposition $B = B_1 \oplus \cdots \oplus B_m$ is obtained from the Jordan blocks of $m$, and one can show that 
\[
(\ind_{L \subset P}^G \cF)|_x \cong R\Gamma(\text{a certain $Q_B$-equivariant complex on $\Gr_k(B)$.})
\]
The results of the present paper can give us substantial control over both sides of this equation, and we hope to use them to prove Mautner's cleanness conjecture in a future paper.

\subsection{Organization of the paper}

In Section~\ref{sec:affpave}, we prove a few preparatory lemmas about the topology of ``isotropic Grassmannians'' with respect to possibly degenerate bilinear forms.

Sections~\ref{sec:orbits} and~\ref{sec:resoln} are devoted the $m = 1$ case.  That is, we consider a vector space $V$ equipped with a nondegenerate symmetric or skew-symmetric bilinear form, and we study the orbits of $\SO(V)$ or $\Sp(V)$, respectively, on $\Gr_k(V)$.  In these sections, we classify orbits and equivariant local systems; we exhibit resolutions of singularities for orbit closures; and we exhibit ``universal covering submersions'' for non-simply-connected orbits.

In Section~\ref{sec:dsums}, we lay the groundwork for the case of general $m$.  We define the groups $I_B$ and $Q_B$, and we classify orbits and local systems, completing items~\eqref{it:classify} and~\eqref{it:classify-locsys} from Section~\ref{ss:overview}.

In Section~\ref{sec:resoln2}, we exhibit a resolution of singularities for each orbit closure (item~\eqref{it:resoln-exist} above).  In Section~\ref{sec:locsys2}, we prove item~\eqref{it:alter-exist} from Section~\ref{ss:overview}.  Finally, the main results on parity sheaves (items~\eqref{it:par-exist} and~\eqref{it:par-cohom} above) are proved in Section~\ref{sec:parity}.

\subsection{General conventions}
\label{ss:conv}

Throughout the paper, we treat Grassmannians and other complex algebraic varieties as equipped with the analytic topology, and we consider sheaves and (singular) cohomology with respect to this topology.  

We will typically denote by $\bk$ the ring of coefficients for singular or equivariant cohomology, or for sheaves.  In the early sections of the paper, most statements allow $\bk$ to be an arbitrary commutative ring; sometimes, we also require $2$ to be invertible.  In Section~\ref{sec:parity}, we will impose the following assumption
\begin{equation}\label{eqn:jmw}
\begin{minipage}{4in}
The ring $\bk$ is either a field of characteristic${}\ne 2$, or a complete local prinicipal ideal domain in which $2$ is invertible.
\end{minipage}
\end{equation}

Regardless of the assumptions on $\bk$, we will use the phrase
\[
\text{``$\coh^\bullet(X,\bk)$ is even and free''}
\]
to mean
\[
\coh^i(X,\bk) =
\begin{cases}
\text{a free $\bk$-module of finite rank} & \text{if $i$ is even,} \\
0 & \text{if $i$ is odd.}
\end{cases}
\]
A ``$\bk$-local system of finite type'' means a locally constant sheaf of $\bk$-modules whose stalks are finitely generated $\bk$-modules.  A $\bk$-local system is said to be ``locally free'' if its stalks are free $\bk$-modules.

\section{Preliminaries on isotropic Grassmannians}
\label{sec:affpave}

Let $X$ be a variety.  Recall that an \emph{affine paving} of $X$ is a decomposition
\[
X = \bigsqcup_{\alpha \in I} S_\alpha
\]
into finitely many locally closed subvarieties such that the following conditions hold:
\begin{enumerate}
\item The indexing set $I$ can be equipped with a total order $I = (\alpha_1 < \cdots < \alpha_N)$ such that for $1 \le i \le N$, the subset
\[
S_{\alpha_1} \cup S_{\alpha_2} \cup \cdots \cup S_{\alpha_i}
\]
is closed.
\item Each $S_\alpha$ is isomorphic to an affine space $\mathbb{A}^{k_\alpha}$.
\end{enumerate}
If $X$ admits an affine paving, then $\Hc^\bullet(X,\bk)$ is even and free.

Let $V$ be a vector space equipped with a (possibly degenerate) symmetric or skew-symmetric bilinear form $\lag{-},{-}\rag$.  For any subspace $H \subset V$, we set
\begin{align*}
H^\perp &= \{ v \in V \mid \text{$\lag v, w \rag = 0$ for all $w \in H$} \} \\
&= \{ v \in V \mid \text{$\lag w, v \rag = 0$ for all $w \in H$} \}.
\end{align*}
We will usually call this the \emph{orthogonal complement} of $H$, although that is probably a misnomer when the form is degenerate.  A subspace $H \subset V$ is called \emph{isotropic} if $H \subset H^\perp$.  For an integer $k$ with $0 \le k \le \dim V$, the \emph{isotropic Grassmannian} is the variety
\[
\Gr_k(V)^\iso = \{ H \in \Gr_k(V) \mid H \subset H^\perp \}.
\]
The \emph{radical} of $V$ is
\[
\rad V = V^\perp.
\]

\begin{lem}\label{lem:gr-affpave}
Let $V$ be a finite-dimensional complex vector space equipped with a (possibly degenerate) symmetric or skew-symmetric bilinear form $\lag{-},{-}\rag$.  Let $M_1 \subset M_2 \subset \cdots \subset M_c$ be a sequence of isotropic subspaces of $V$.  For $0 \le k \le \dim V$, the variety $\Gr_k(V)^\iso$ admits an affine paving
\[
\Gr_k(V)^\iso = \bigsqcup_{\alpha \in I} S_\alpha
\]
such that for each piece $S_\alpha$ and each $i$ with $1 \le i \le c$, the function
\[
S_\alpha \to \Z
\qquad\text{given by}\qquad
H \mapsto \dim H \cap M_i
\]
is constant.
\end{lem}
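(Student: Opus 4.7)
The plan is to build the affine paving from the Schubert stratification of $\Gr_k(V)$ associated to a carefully chosen extension of $M_1 \subset \cdots \subset M_c$ to a full flag $F_\bullet$ of $V$ that is adapted to the bilinear form. Once such a full flag is chosen, the Schubert cells are automatically compatible with the function $H \mapsto \dim H \cap F_i$, and hence with $H \mapsto \dim H \cap M_j$, so the content of the lemma lies in (a) constructing $F_\bullet$ and (b) showing that each Schubert cell meets $\Gr_k(V)^\iso$ in either an affine space or the empty set.

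First I would choose an ordered basis $e_1, \ldots, e_n$ of $V$ such that each $M_j$ is spanned by an initial segment, such that this initial segment extends to a basis of a maximal isotropic subspace $L \subset V$ containing $\rad V$, and such that the remaining vectors lie in a fixed complement of $L$ arranged so that the Gram matrix $(\lag e_i, e_j \rag)$ vanishes outside the hyperbolic pairings between $L$ and its complement. Such a basis exists because the radical is automatically isotropic, any isotropic subspace enlarges to a maximal isotropic one, and the form restricts to a nondegenerate form on any complement of the radical, which admits a hyperbolic basis. Setting $F_i = \spn(e_1, \ldots, e_i)$, the standard Schubert cells $C_S$ of $\Gr_k(V)$ attached to $F_\bullet$ and indexed by $k$-subsets $S \subset \{1, \ldots, n\}$ then give an affine paving of $\Gr_k(V)$ on which $\dim H \cap M_j$ is constant.

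The main step is to show that $C_S \cap \Gr_k(V)^\iso$ is either empty or an affine space. I would parameterize $C_S$ by the free entries of the reduced row-echelon matrix with pivot columns $S$ and write out the isotropy equations $\lag v_i, v_j \rag = 0$ in these coordinates. Thanks to the chosen shape of the Gram matrix, each such equation has the form $\pm (\text{one free entry}) + (\text{polynomial in other free entries}) = 0$, with the distinguished leading entry determined by $S$ and by the hyperbolic involution $i \leftrightarrow n+1-i$. If the equations are ordered so that each leading entry is eliminated exactly once and never reappears, then $C_S \cap \Gr_k(V)^\iso$ is exhibited as the graph of a polynomial map from an affine space, and is therefore affine.

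The main obstacle is the bookkeeping in the last step: one must check that every isotropy equation contains a genuinely free leading entry, that distinct equations carry distinct leading entries, and that substitutions never reintroduce an already-eliminated entry. This is a purely combinatorial verification concerning the interaction of the pivot set $S$ with the involution $i \leftrightarrow n+1-i$ and with the block of indices corresponding to $\rad V$, on which the form vanishes identically. On the radical block every isotropy equation trivializes, so the free entries there form an unconstrained affine factor; once that factor is peeled off, the elimination is carried out on the nondegenerate block as above.
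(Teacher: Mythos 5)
Your proposal takes a genuinely different route from the paper.  The paper argues by induction on $\dim V$: it picks a line $L$ (in the radical, or in $M_1$), splits $\Gr_k(V)^\iso$ into three pieces $X_1, X_2, X_3$ according to the position of $H$ relative to $L$ and $L^\perp$, and reduces each piece to a smaller isotropic Grassmannian via an explicit affine-bundle structure (with the case $X_3$ requiring a delicate argument in the symmetric case).  Your approach instead tries to produce the paving in one shot from the $\GL$-Schubert cells of an adapted flag, with the heavy lifting delegated to a Gaussian-elimination analysis of the isotropy equations on each cell.  This is a sensible and classical idea for \emph{nondegenerate} forms, where Schubert cells of Lagrangian and orthogonal Grassmannians are well known to be affine.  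But as written there are two genuine gaps.

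First, the basis construction is not actually available in the form you describe once the form is degenerate.  You ask that each $M_j$ be an initial segment of the flag, that these extend to a basis of a maximal isotropic $L$, and that ``the Gram matrix vanishes outside the hyperbolic pairings between $L$ and its complement.''  When $\rad V \neq 0$ this last condition cannot hold: the pairing $L \times L' \to \C$ has kernel $\rad V$ on the $L$ side (and a nonzero kernel on the $L'$ side as well), and $\rad V$ need not be an initial segment of the flag --- it can be interleaved arbitrarily with the $M_j$'s.  So there is no clean ``involution $i \leftrightarrow n+1-i$'' on the full index set, and the remark ``on the radical block every isotropy equation trivializes'' presupposes a block decomposition of the basis that may be incompatible with the requirement that the $M_j$'s be initial segments.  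One can salvage this by allowing the radical indices to sit anywhere inside $\{1,\dots,\ell\}$ and taking $\tau$ to be an order-reversing involution of the \emph{non-radical} indices, but this refinement changes what has to be checked and must be spelled out.

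Second, and more seriously, the step you yourself flag as ``the main obstacle'' is in fact the entire content of the lemma, and it is left unverified.  Showing that each nonautomatic isotropy equation $\lag v_i, v_j \rag = 0$ on the cell $C_S$ has a genuinely free leading entry, that distinct equations own distinct leading entries, and that the eliminations can be ordered without circularity, is a real combinatorial argument --- one that interacts with the subtleties above (whether $n+1-s_i$ is itself a pivot, whether $s_i$ is a radical index, the odd-orthogonal middle coordinate, etc.).  Calling this ``purely combinatorial'' is accurate, but it is not routine and cannot be waved through; it is essentially of the same order of difficulty as the paper's $X_3$ analysis.  Until that verification is carried out, the proposal is a plausible sketch of an alternative proof, not a proof.
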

This lemma is certainly well known in the cases where the bilinear form $\lag{-},{-}\rag$ is either identically zero, or else nondegenerate: in these cases, it follows from the Bruhat decomposition.  

\begin{proof}
We proceed by induction on $\dim V$.  If $\dim V = 0$, then $k = 0$, and $\Gr_k(V)^\iso$ is a singleton.  The statement is obvious in this case.

Assume now that $\dim V > 0$.  If $\lag{-},{-}\rag$ is degenerate, choose a $1$-dimensional space $L \subset \rad V$.  Otherwise, let $L$ be any $1$-dimensional subspace of $M_1$.  In both cases, $L$ is automatically isotropic.  Choose a complement $W$ to $L$ in $L^\perp$, so that $L^\perp = L \oplus W$.  Let $p_1: L^\perp \to L$ and $p_2: L^\perp \to W$ be the projection maps.  If $H \subset L^\perp$, then 
\begin{multline*}
\dim p_2(H) \cap p_2(M_i) = \dim p_2^{-1}(p_2(H) \cap p_2(M_i)) -1 = \dim (H+L) \cap (M_i + L) -1 \\
= \dim (H+L) + \dim (M_i+L) - \dim (H+M_i+L) - 1.
\end{multline*}
Comparing this to $\dim H \cap M_i = \dim H + \dim M_i - \dim (H+M_i)$, we find that
\begin{equation}\label{eqn:hm-dim}
\dim H \cap M_i =
\begin{cases}
\dim p_2(H) \cap p_2(M_i) + 1& \text{if $L \subset H$ and $L \subset M_i$,} \\
\dim p_2(H) \cap p_2(M_i) & \text{if $L \not\subset H$ and $L \subset M_i$,} \\
\dim p_2(H) \cap p_2(M_i) & \text{if $L \subset H$ and $L \not\subset M_i$,} \\
\dim p_2(H) \cap p_2(M_i) -1 & \text{if $L \not\subset H$, $L \not \subset M_i$, and $L \subset H + M_i$,} \\
\dim p_2(H) \cap p_2(M_i) & \text{if $L \not\subset H + M_i$.}
\end{cases}
\end{equation}

Now let
\begin{align*}
X_1 &= \{H \in \Gr_k(V)^\iso \mid L \subset H \}, \\
X_2 &= \{H \in \Gr_k(V)^\iso \mid L \not\subset H,\ H \subset L^\perp\}, \\
X_3 &= \{H \in \Gr_k(V)^\iso \mid H \not\subset L^\perp\}.
\end{align*}
It is enough to show that each of these locally closed subsets separately admits a suitable affine paving.

For $X_1$, observe that an isotropic subspace $H$ that contains $L$ is necessarily contained in $L^\perp$.  The map $p_2$ induces an isomorphism
\[
X_1 \cong \Gr_{k-1}(W)^\iso.
\]
By induction, $\Gr_{k-1}(W)^\iso$ admits an affine paving such that the function $F \mapsto \dim F \cap p_2(M_i)$ is constant on each stratum.  We transfer this affine paving to $X_1$. Since $\dim p_2(H)\cap p_2(M_i)$ is constant on each stratum and $L$ and $L$ is fixed, so the condition in ~\eqref{eqn:hm-dim} is also constant. Therefore the function $H \mapsto \dim H \cap M_i$ is constant on each stratum in $X_1$.

Next, we turn to $X_2$.  Consider the variety
\[
E = \{ (F, f) \mid F \in \Gr_k(W)^\iso,\ f \in \Hom(F,L) \}.
\]
This is a vector bundle over $\Gr_k(W)^\iso$.  For $H \in X_2$, the map $p_2$ induces an isomorphism $H \cong p_2(H)$.  Let $f_H: p_2(H) \to L$ be the composition
\[
f_H: p_2(H) \xrightarrow[\sim]{\text{induced by $p_2$}} H \xrightarrow{p_1} L.
\]
Then we have a map
\[
\theta: X_2 \to E
\qquad\text{given by}\qquad H \mapsto (p_2(H), f_H).
\]
In fact, this is an isomorphism: the inverse map sends $(F,f)$ to $H = \{(f(v),v) \in L \oplus W \mid v \in F \}$.  Thus, $X_2$ is a vector bundle over $\Gr_k(W)^\iso$.  By induction, $\Gr_k(W)^\iso$ admits an affine paving such that $F \mapsto \dim F \cap p_2(M_i)$ is constant along strata.  Taking preimages along $X_2 \to \Gr_k(W)^\iso$, we obtain an affine paving of $X_2$.  By~\eqref{eqn:hm-dim}, the function $H \mapsto \dim H \cap M_i$ is constant on each stratum in $X_2$.

Finally, we consider $X_3$.  If $X_3$ is nonempty, then $L^\perp \ne V$, i.e., $L$ is not contained in the radical of $\lag{-},{-}\rag$.  Our set-up implies that this can happen only when $\lag{-},{-}\rag$ is nondegenerate.  Assume this from now.  We also have $L \subset M_1$, and since all the $M_i$ are isotropic, we have
\begin{equation}\label{eqn:lmc-perp}
L \subset M_1 \subset \cdots \subset M_c \subset L^\perp.
\end{equation}
In addition, the restriction of $\lag{-},{-}\rag$ to $W$ is nondegenerate, and $W^\perp$ is $2$-dimen\-sion\-al and contains $L$.  Choose an isotropic complement $R$ to $L$ in $W^\perp$, and then choose some nonzero vector $r_0 \in R$.

Let
\[
X_2' = \{ F \in \Gr_{k-1}(V)^\iso \mid L \not\subset F,\ F \subset L^\perp \}.
\]
This variety is analogous to $X_2$, and the same reasoning shows that $X_2'$ has an affine paving such that $F \mapsto \dim F \cap M_i$ is constant on strata. Let $u \in L^\perp/F$, we define a linear map $\lag u, {-} \rag: F \to \C$.  this map is well defined as $\lag u,v\rag=0$ for all $u, v\in F$, which is because $F\subset F^\perp$. The assignment $u \mapsto \lag u,-\rag$ is a linear map $L^\perp/F \to F^*$.   Consider the element $\lag -r_0, {-}\rag \in F^*$.  Its preimage in $L^\perp/F$, given concretely by
\[
\{ u \in L^\perp/F \mid \text{$\lag u,v \rag = \lag -r_0,v \rag$ for all $v \in F$} \}
\]
is an affine subspace of $L^\perp/F$.  We deduce that the variety
\[
E' = \{ (F,u) \mid \text{$F \in X_2'$, $u \in L^\perp/F$, and $\lag u,v \rag = \lag -r_0,v \rag$ for all $v \in F$} \}
\]
is an affine bundle over $X_2'$.  The affine paving of $X_2'$ induces an affine paving of $E'$.

For $H \in X_3$, we have $H \cap L^\perp \in X_2'$.  Moreover, the composition $H \to H/(H \cap L^\perp) \cong (H+L^\perp)/L^\perp$ is surjective. As $H$ is one dimensional and not contained in $L^\perp$, hence $H+L^\perp\cong V$. This implies $H/(H \cap L^\perp) \cong V/L^\perp$. Now we claim   $V/L^\perp\cong R$. The intersection  $R\cap L^\perp$ must be $0$. If not then $R\subset L^\perp$. But  $L^\perp \cap W^\perp$ is one dimensional and hence $L=L^\perp \cap W^\perp$. So $R$ can not be contained in both $W^\perp$ and $L^\perp$ and we are done with our claim. Therefore $R\hookrightarrow V/L^\perp$ and by dimension equality $V/L^\perp \cong R$. Finally we get a surjective map $H\to R$. so there is a vector $\tilde r_H \in H$ that maps to $r_0$. We have $\tilde r_H - r_0 \in L^\perp$.  Since $H$ is isotropic, for any $v \in H \cap L^\perp$, we have $\lag \tilde r_H, v \rag = 0$, so
\[
\lag \tilde r_H - r_0, v \rag = \lag -r_0, v\rag.
\]
We can thus consider the map
\[
\theta': X_3 \to E'
\qquad\text{given by}\qquad
\theta'(H) = (H \cap L^\perp, \tilde r_H - r_0 + (H \cap L^\perp)).
\]

The remainder of the argument is slightly different depending on whether $\lag{-},{-}\rag$ is symmetric or skew-symmetric.  In the skew-symmetric case, $\theta'$ is an isomorphism: the inverse map sends a pair $(F,u) \in E'$ to the space $H = F + \C(u + r_0)$.  Thus, $X_3$ has an affine paving induced by that $X_2'$.  In view of~\eqref{eqn:lmc-perp}, we have
\[
\dim H \cap M_i = \dim (H \cap L^\perp) \cap M_i,
\]
and thus the function $H \mapsto \dim H \cap M_i$ is constant on strata.

It remains to consider the symmetric case.  We begin by describing $E'$ another way.  For $F \in X_2'$, the projection maps $p_1: L^\perp \to L$ and $p_2: L^\perp \to W$ induce an isomorphism
\[
L^\perp/F \cong L \oplus W/p_2(F).
\]
For $u \in L^\perp/F$, write $u = u_1 + u_2$ with $u_1 \in L$ and $u_2 \in W/p_2(F)$.  Then $\lag u, v\rag = \lag u_2, v\rag$ for all $v \in F$, so we can identify
\[
E' = \left\{ (F,u_1,u_2) \,\Big|\,
\begin{array}{c}
\text{$F \in X_2'$, $u_1 \in L$, $u_2 \in W/p_2(F)$,} \\
\text{and $\lag u_2,v \rag = \lag -r_0,v \rag$ for all $v \in F$}
\end{array}
\right\}.
\]
In this description, there is no condition on $u_1$.  Consider the subset
\[
E'' = \{ (F, u_1, u_2) \in E' \mid \lag u_1, r_0\rag = \textstyle -\frac{1}{2}\lag u_2 + r_0, u_2 + r_0\rag \}.
\]
We need to check that the expression ``$\lag u_2+r_0, u_2+r_0\rag$'' makes sense, because $W/p_2(F) \oplus R$ does \emph{not} have an induced bilinear form.  Since we are starting with a point in $E'$, for $v \in F$, we have
\[
\lag u_2 + v +r_0, u_2 +v+r_0\rag = \lag u_2 + r_0, u_2+r_0\rag + 2\lag u_2+r_0,v\rag + \lag v, v\rag.
\]
On the right-hand side, the middle term vanishes by the definition of $E'$, and the last term vanishes because $F$ is isotropic.  We conclude that $E''$ is well-defined.  Its definition says simple that the $u_1$ component is determined by $u_2$.  Thus, $E''$ is isomorphic to the variety
\[
\{ (F, u_2) \in E' \mid \text{$F \in X_2'$, $u_2 \in W/p_2(F)$, and $\lag u_2,v \rag = \lag -r_0,v \rag$ for all $v \in F$} \}.
\]
In particular, $E''$ is also an affine space bundle over $X_2'$.

Note that
\[
\lag u_1 + u_2 + r_0, u_1 + u_2 + r_0\rag = 2\lag u_1, r_0\rag + \lag u_2 + r_0, u_r + r_0\rag.
\]
In view of this, the definition of $E''$ can be rewritten as
\[
E'' = \{ (F,u) \in E' \mid \lag u + r_0, u+ r_0\rag = 0 \}.
\]
In the symmetric case, it is clear that the image of $\theta'$ is contained in $E''$, since $\tilde r_H$ is an isotropic vector.  In fact, $\theta'$ induces an isomorphism $X_3 \cong E''$: the inverse map again sends $(F,u) \in E'$ to $H = F + \C(u + r_0)$.  We conclude that $X_3$ has an affine paving with the desired properties.
\end{proof}

\begin{cor}\label{cor:gr-oddvan}
For $V$ as in Lemma~\ref{lem:gr-affpave} and for any commutative ring $\bk$, we have that $\coh^\bullet(\Gr_k(V)^\iso, \bk)$ is even and free.
\end{cor}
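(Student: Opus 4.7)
The plan is to bootstrap Corollary~\ref{cor:gr-oddvan} directly from Lemma~\ref{lem:gr-affpave} together with the observation, already stated in this section, that compactly supported cohomology of a variety with an affine paving is even and free.

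First, I would invoke Lemma~\ref{lem:gr-affpave}, applied for instance to the empty chain of isotropic subspaces (or to any convenient chain, since no compatibility with the $M_i$ is needed for the corollary). This gives an affine paving $\Gr_k(V)^\iso = \bigsqcup_{\alpha \in I} S_\alpha$ with $S_\alpha \cong \mathbb{A}^{k_\alpha}$. By the standard long exact sequence argument (applied inductively along the closed filtration provided by the paving, using $\Hc^\bullet(\mathbb{A}^{k_\alpha}, \bk)$ concentrated in degree $2k_\alpha$ and free), one obtains that $\Hc^\bullet(\Gr_k(V)^\iso, \bk)$ is even and free — this is precisely the general statement recalled at the beginning of this section.

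Second, I would note that $\Gr_k(V)^\iso$ is compact. Indeed, $\Gr_k(V)$ is a projective complex algebraic variety, and the condition $H \subset H^\perp$ is a closed algebraic condition, so $\Gr_k(V)^\iso$ is a closed subvariety of $\Gr_k(V)$, hence compact in the analytic topology. Therefore compactly supported cohomology agrees with ordinary singular cohomology:
\[
\coh^\bullet(\Gr_k(V)^\iso, \bk) \cong \Hc^\bullet(\Gr_k(V)^\iso, \bk).
\]
Combining the two steps yields the corollary. There is no real obstacle here; the only thing to verify is the passage from $\Hc^\bullet$ to $\coh^\bullet$, and this is immediate once one observes that $\Gr_k(V)^\iso$ is projective.
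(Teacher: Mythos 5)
Your proof is correct and matches the paper's intended (unwritten) argument: apply Lemma~\ref{lem:gr-affpave} to get an affine paving, invoke the recalled fact that affine pavings yield even and free compactly supported cohomology, and then identify $\Hc^\bullet$ with $\coh^\bullet$ since $\Gr_k(V)^\iso$ is a closed subvariety of the projective variety $\Gr_k(V)$ and hence compact. The paper states the corollary without proof precisely because this is the immediate deduction; you have simply spelled out the details, including the one non-trivial observation (projectivity) needed to pass from compactly supported to ordinary cohomology.
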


The next lemma will be a useful technical tool for inductively constructing affine pavings later in the paper.

\begin{lem}\label{lem:gr-pave}
Let $V$ be a finite-dimensional complex vector space equipped with a (possibly degenerate) symmetric or skew-symmetric bilinear form $\lag{-},{-}\rag$.  Let $M_1 \subset M_2 \subset \cdots \subset M_c$ be a sequence of isotropic subspaces of $V$.  Let $0 \le r \le k \le \dim V$, and consider the variety
\[
Y(r,k) = \{ (R,H) \in \Gr_r(M_1 \cap \rad V) \times \Gr_k(V)^\iso \mid R \subset H \}.
\]
Then $Y(r,k)$ admits a partition into locally closed subvarieties
\[
Y(r,k) = \bigsqcup_{\alpha \in I} \tilde S_\alpha
\]
with the following two properties:
\begin{enumerate}
\item For each $\alpha$ and each $i$ with $1 \le i \le c$, the function $\tilde S_\alpha \to \Z$ given by $(R,H) \mapsto \dim H \cap M_i$ is constant.
\item For each $\alpha$, the projection map $\tilde S_\alpha \to \Gr_r(M_1 \cap \rad V)$ is a locally trivial fibration whose fibers are affine spaces.
\end{enumerate}
\end{lem}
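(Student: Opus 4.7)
The plan is to analyze a single fiber of the projection $\pi:Y(r,k)\to\Gr_r(M_1\cap\rad V)$ using Lemma~\ref{lem:gr-affpave}, and then spread that analysis across the base by means of a transitive group of isometries of $V$. Set $N=M_1\cap\rad V$ and $M'_i=M_i\cap\rad V$, and pick a basepoint $R_0\in\Gr_r(N)$. Since $R_0\subset\rad V$, the form descends to $V/R_0$, the images of $M_1,\ldots,M_c$ remain isotropic, and $H\mapsto H/R_0$ defines an isomorphism
\[
F_{R_0}:=\{H\in\Gr_k(V)^\iso\mid R_0\subset H\}\;\simto\;\Gr_{k-r}(V/R_0)^\iso.
\]
Applying Lemma~\ref{lem:gr-affpave} to $V/R_0$ with the chain $M_1/R_0\subset\cdots\subset M_c/R_0$ yields an affine paving $F_{R_0}=\bigsqcup_\alpha S^0_\alpha$ along which $H\mapsto\dim H\cap M_i=\dim(H/R_0)\cap(M_i/R_0)+r$ is constant on each stratum.

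Next I build an algebraic subgroup $G\subset\GL(V)$, isomorphic to $\GL(N)$, consisting of isometries that preserve every $M_i$ and act transitively on $\Gr_r(N)$. I choose a complement $C$ to $N$ in $\rad V$ and a complement $W$ to $\rad V$ in $V$ satisfying the filtration-compatibility conditions $M'_i=N+(M'_i\cap C)$ and $M_i=M'_i+(M_i\cap W)$ for all $i$; such complements exist by splitting the nested chains $N\subset M'_2\subset\cdots\subset M'_c\subset\rad V$ and $M_1/M'_1\subset\cdots\subset M_c/M'_c\subset V/\rad V$ inductively. Any $g\in\GL(N)$ is then extended to $V$ by the identity on $C$ and on $W$. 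A short linear-algebra check, using that $\rad V$ pairs trivially with all of $V$ and that $W$ and $C$ are adapted to $M_\bullet$, shows the extension is an isometry preserving each $M_i$. The resulting $G$ acts diagonally on $Y(r,k)$, commutes with $\pi$, and preserves each function $(R,H)\mapsto\dim H\cap M_i$.

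To assemble the partition, choose an affine paving $\Gr_r(N)=\bigsqcup_\beta C_\beta$ by Schubert cells relative to some full flag of $N$, and for each $\beta$ pick an algebraic section $\sigma_\beta:C_\beta\to G$ of the orbit map $g\mapsto g\cdot R_0$; such sections exist because $G\to\Gr_r(N)$ is Zariski-locally trivial over each Schubert cell. The formula $(R,H)\mapsto(R,\sigma_\beta(R)H)$ gives an isomorphism $C_\beta\times F_{R_0}\simto\pi^{-1}(C_\beta)$ trivializing $\pi$ over $C_\beta$. Define $\tilde S_{\beta,\alpha}$ to be the image of $C_\beta\times S^0_\alpha$ under this isomorphism. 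Each $\tilde S_{\beta,\alpha}$ is locally closed in $Y(r,k)$ and, via the trivialization, isomorphic to the affine space $C_\beta\times S^0_\alpha$; its projection to $\Gr_r(N)$ has image $C_\beta$ and is a trivial affine-space fibration onto it, establishing property~(2). For property~(1), on $\tilde S_{\beta,\alpha}$ one has $H=\sigma_\beta(R)\tilde H$ with $\tilde H\in S^0_\alpha$, and since $\sigma_\beta(R)\in G$ preserves $M_i$, $\dim H\cap M_i=\dim\tilde H\cap M_i$, which is constant on $S^0_\alpha$.

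The main obstacle is the middle step: producing enough isometries of $V$ to realize all of $\GL(N)$ while simultaneously preserving the entire chain $M_1\subset\cdots\subset M_c$. Once the two complements $C$ and $W$ have been selected in a filtration-compatible way this reduces to a routine verification, but making those choices---in particular lining up nested lifts of $M_i/M'_i$ inside a single complement $W$ and similarly arranging $C$---is the one place where some care is required.
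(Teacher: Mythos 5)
Your proof is correct, and it takes a genuinely different route from the paper's. The paper's own argument is brief: it applies Lemma~\ref{lem:gr-affpave} to get an affine paving $\Gr_k(V)^\iso=\bigsqcup S_\alpha$, then sets
\[
\tilde S_\alpha=\{(R,H)\in\Gr_r(M_1\cap\rad V)\times S_\alpha\mid R\subset H\},
\]
observes that condition~(1) is inherited directly, and for~(2) asserts that $\tilde S_\alpha$, being a closed subspace of the trivial fibration $\Gr_r(M_1\cap\rad V)\times S_\alpha$, projects as a locally trivial affine-space fibration. That last step is stated rather than argued; it relies implicitly on the particular shape of the cells $S_\alpha$ produced in the proof of Lemma~\ref{lem:gr-affpave}.

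You instead analyze the basepoint fiber $F_{R_0}\cong\Gr_{k-r}(V/R_0)^\iso$ (valid because $R_0\subset\rad V$ makes the form descend and the $M_i/R_0$ remain isotropic), apply Lemma~\ref{lem:gr-affpave} downstairs, and then transport the resulting paving across the base using a subgroup $G\cong\GL(N)\subset\Iso(V)$ that preserves the entire chain $M_\bullet$ and acts transitively on $\Gr_r(N)$. The construction of $G$ is the crux, and your filtration-compatible choices of $C$ and $W$ do work: the nested-complements argument produces $C\subset\rad V$ with $M'_i=N\oplus(M'_i\cap C)$, and the analogous nested lifts $W_1\subset\cdots\subset W_c\subset W$ satisfy $M_i\cap W=W_i$ by a dimension count (since $W\cap\rad V=0$ forces $\dim(M_i\cap W)\le\dim M_i-\dim M'_i$, with the reverse inclusion built in). The algebraic local sections $\sigma_\beta$ over Schubert cells exist because $\GL(N)\to\Gr_r(N)$ is a Zariski-locally trivial parabolic bundle, trivial over each cell. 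With those in hand, the pieces $\tilde S_{\beta,\alpha}$ visibly satisfy both conditions, and the $G$-equivariance of $\dim H\cap M_i$ handles~(1). What your approach buys is a fully explicit verification of the local triviality in~(2), at the cost of the extra machinery (the quotient $V/R_0$ and the group $G$); the paper's route, when completed, is shorter because it never leaves the ambient $\Gr_k(V)^\iso$.

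Two small points worth flagging for precision. First, your pieces $\tilde S_{\beta,\alpha}$ project onto the Schubert cell $C_\beta$, not onto all of $\Gr_r(N)$; this matches what the paper's construction also produces, and is how the statement must be read (a locally trivial affine-space fibration over its image, which is itself locally closed). Second, when you say the images of $M_1,\ldots,M_c$ in $V/R_0$ ``remain isotropic,'' it is also worth noting explicitly that $R_0\subset M_1\cap\rad V\subset M_i$ for every $i$, so that $M_i/R_0$ even makes sense; you use this silently but it is the reason the chain descends.
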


\begin{proof}
	We can define $\tilde S_\alpha=\{(R,H)\in \Gr_r(M_1\cap \rad V)\times S_\alpha \mid R\subset H\}$. From Lemma \ref{lem:gr-affpave},  (1) is obvious.
	For (2), the map, $\Gr_r(M_1\cap \rad V)\times S_\alpha \to \Gr_r(M_1\cap \rad V)$ is a locally trivial fibration with fibre isomorphic to $S_\alpha$, which is affine. The space, $\tilde S_\alpha$ is a closed subspace of  
	$\Gr_r(M_1\cap \rad V)\times S_\alpha $. Therefore the proof follows.
\end{proof}

We conclude this section with the following lemma on the nondegenerate case.

\begin{lem}\label{lem:griso-conn}
Let $V$ be a finite-dimensional complex vector space equipped with a nondegenerate symmetric or skew-symmetric bilinear form $\lag{-},{-}\rag$.  Let $0 \le k \le \frac{1}{2}\dim V$.
\begin{enumerate}
\item The variety $\Gr_k(V)^\iso$ is smooth, and each connected component is simply connected.
\item If $\lag{-},{-}\rag$ is symmetric and $\dim V = 2k$, then $\Gr_k(V)^\iso$ has exactly two connected components.  In all other cases, $\Gr_k(V)^\iso$ is connected.
\end{enumerate}
\end{lem}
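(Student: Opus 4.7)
The plan is to realize each connected component of $\Gr_k(V)^\iso$ as a homogeneous space for the connected group $\Isoc(V)$ by means of Witt's extension theorem, and then to deduce simple connectedness from the affine paving produced by Lemma~\ref{lem:gr-affpave}.

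\emph{Step 1 (orbits of $\Isoc(V)$).} By Witt's theorem, any linear isomorphism between isotropic subspaces of $V$ extends to an isometry, so the full isometry group $\Iso(V)$ acts transitively on $\Gr_k(V)^\iso$. To pass from $\Iso(V)$ to $\Isoc(V)$ I examine stabilizers. In the symplectic case $\Iso(V) = \Isoc(V)$, and there is nothing to do. In the orthogonal case with $k < \tfrac{1}{2}\dim V$ or $\dim V$ odd, the induced form on $H^\perp/H$ is nondegenerate of positive dimension, so there is an anisotropic vector $v \in H^\perp \setminus H$; the reflection $s_v \in \rO(V) \setminus \SO(V)$ fixes $H$ pointwise, so the $\rO(V)$-stabilizer of $H$ meets both cosets of $\SO(V)$, and $\SO(V)$ still acts transitively. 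In the orthogonal case $\dim V = 2k$, no such $v$ exists, and the classical dichotomy of maximal isotropic subspaces (two families distinguished by the parity of $k - \dim(H \cap H_0)$ for a fixed reference maximal isotropic $H_0$) yields exactly two $\SO(V)$-orbits, swapped by any element of $\rO(V) \setminus \SO(V)$.

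\emph{Step 2 (smoothness and components).} Each $\Isoc(V)$-orbit is a quotient $\Isoc(V)/P$, where $P$ is the stabilizer of a chosen isotropic subspace, a parabolic subgroup of $\Isoc(V)$; in particular $P$ is connected, so each orbit is smooth, connected, and irreducible, all of the same dimension. Since the orbits partition $\Gr_k(V)^\iso$ and each has the full dimension of $\Gr_k(V)^\iso$, every orbit is open, so the orbits coincide with the connected components. This yields the smoothness of $\Gr_k(V)^\iso$ and the stated component count.

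\emph{Step 3 (simple connectedness).} Lemma~\ref{lem:gr-affpave} supplies an affine paving of $\Gr_k(V)^\iso$. Each cell $S_\alpha \cong \bA^{k_\alpha}$ is connected and so lies in a single component; hence each connected component inherits its own affine paving and therefore carries a finite CW structure with cells only in even real dimensions. In any connected CW complex of this form, the $1$-skeleton coincides with the discrete $0$-skeleton, so every $2$-cell must be attached by a constant map; this forces a single $0$-cell and identifies the $2$-skeleton with a wedge of $2$-spheres, after which attaching cells of real dimension $\ge 4$ leaves $\pi_1$ unchanged. Therefore each connected component is simply connected.

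\emph{Main obstacle.} The most delicate point is the orthogonal middle-dimensional case, where one must check both that the reflection trick of Step~1 genuinely breaks down and that the two $\SO(V)$-orbits are distinct (rather than coalescing under some hidden symmetry). As an alternative to the CW argument of Step~3, one could instead use the long exact homotopy sequence of the fibration $P \to \Isoc(V) \to \Isoc(V)/P$ together with the surjectivity of $\pi_1(T) \to \pi_1(\Isoc(V))$ for a maximal torus $T \subset P$ to obtain $\pi_1(\Isoc(V)/P) = 0$.
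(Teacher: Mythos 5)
Your Steps~1 and~2 are sound and essentially reconstruct what the paper does in Lemma~\ref{lem:v4-orbit} and Propositions~\ref{prop:orbit-symp}, \ref{prop:orbit-orth}: Witt's theorem gives transitivity of $\Iso(V)$, the reflection trick reduces $\rO(V)$-transitivity to $\SO(V)$-transitivity when $k < \tfrac12 \dim V$, and the two-family dichotomy handles the Lagrangian orthogonal case. (The paper itself defers the proof of Lemma~\ref{lem:griso-conn}, calling it well known and noting it follows from the orbit analysis of Section~3, so a careful direct argument is welcome.)

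Step~3, however, contains a genuine gap. You assert that a connected component with an affine paving ``therefore carries a finite CW structure with cells only in even real dimensions.'' An affine paving, in the sense of this paper, is merely a filtration by closed subsets whose successive differences are isomorphic to affine spaces; it does \emph{not} automatically furnish CW attaching maps. A CW structure requires each cell $S_\alpha \cong \C^{k_\alpha}$ to be the image of the interior of a closed disk $D^{2k_\alpha}$ under a continuous characteristic map sending the boundary sphere into the lower skeleton, and nothing in the definition of affine paving (or in the inductive construction in Lemma~\ref{lem:gr-affpave}) produces such a compactification. For Schubert cells one can build these maps using Bott--Samelson-type resolutions or a Bia{\l}ynicki-Birula $\Gm$-action, but you would need to say explicitly that the paving in question is of that form. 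As written, the implication ``affine paving $\Rightarrow$ CW structure'' is unjustified.

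Fortunately, the conclusion is easy to rescue, and your own ``main obstacle'' paragraph already names one correct route: for a parabolic $P \subset \Isoc(V)$ (connected) and the fibration $P \to \Isoc(V) \to \Isoc(V)/P$, the long exact homotopy sequence together with the surjectivity of $\pi_1(T) \to \pi_1(\Isoc(V))$ for a maximal torus $T \subset P$ gives $\pi_1(\Isoc(V)/P) = 0$; this is the standard argument and is fully rigorous once one notes $\pi_0(P)=0$. A second repair, closer in spirit to your Step~3 but avoiding the CW claim, is a transversality argument: since each component $Y$ of $\Gr_k(V)^\iso$ is a smooth (by Step~2) connected projective variety, the top cell of its affine paving is a dense Zariski-open subset isomorphic to $\C^d$, and its complement has real codimension $\ge 2$; general position for a smooth manifold then shows $\pi_1(\C^d) \to \pi_1(Y)$ is surjective, forcing $\pi_1(Y)=0$. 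Either of these replaces the unjustified CW assertion, and with that substitution the proof is complete.
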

The assertions in Lemma~\ref{lem:griso-conn} are well known.  Alternatively, they follow from Propositions~\ref{prop:orbit-symp} and~\ref{prop:orbit-orth} below: in the notation of those propositions, we have $\Gr_k(V)^\iso = \cC_{k,0}(V)$.

\section{Symplectic and orthogonal group orbits on Grassmannians}
\label{sec:orbits}

In this section, we work in the following setting: let $V$ be a complex vector space of dimension $n < \infty$, and assume that it is equipped with a nondegenerate bilinear form
\[
\lag{-},{-}\rag: V \times V \to \C,
\]
assumed to be either symmetric or skew-symmetric.  Then one may consider the isotropy group $\Iso(V)$ and its identity component $\Isoc(V)$.

The goal of this section is to classify the orbits of $\Isoc(V)$ (i.e., either $\SO(V)$ or $\Sp(V)$) on the Grassmannian $\Gr_k(V)$.  This classification is certainly known, but we are unaware of a reference that includes enough details about stabilizers and equivariant cohomology for our needs.  For this reason, we give complete proofs here.

\subsection{Stratification by rank}
\label{ss:orbit-gen}

We begin with some preliminary lemmas that apply in both the symmetric and skew-symmetric cases.

\begin{lem}\label{lem:v4-decomp}
Let $H \subset V$ be a subspace of dimension $k$, and let $r = k - \dim H \cap H^\perp$.  There exist subspaces $M_1, M_2, M_3, M_4 \subset V$ with the following properties:
\begin{gather}
V = M_1 \oplus M_2 \oplus M_3 \oplus M_4, \label{eqn:v4sum}\\
\dim M_1 = k- r, \quad
\dim M_2 = r, \quad
\dim M_3 = n-2k+r, \quad
\dim M_4 = k - r, \notag\\
H \cap H^\perp = M_1, 
\qquad H = M_1 \oplus M_2, \qquad H^\perp = M_1 \oplus M_3. \notag
\end{gather}
Moreover, the restriction of $\lag{-},{-}\rag$ to the summands in~\eqref{eqn:v4sum} satisfies
\begin{multline}\label{eqn:v4sum-pair}
\lag{-},{-}\rag: M_i \times M_j \to \C 
\qquad\text{is}\qquad \\
\begin{cases}
\text{perfect} & \text{if $\{i,j\} = \{1,4\}$, or if $i = j = 2$, or if $i = j = 3$,} \\
0 & \text{otherwise.}
\end{cases}
\end{multline}
\end{lem}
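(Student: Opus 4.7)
The plan is to construct $M_1, M_2, M_3$ directly from $H$ and $H^\perp$, and then obtain $M_4$ as a Lagrangian complement in a carefully chosen nondegenerate summand. First I would set $M_1 := H \cap H^\perp$, and choose arbitrary vector-space complements: $M_2$ to $M_1$ in $H$, and $M_3$ to $M_1$ in $H^\perp$. The stated dimensions of $M_1, M_2, M_3$ follow from the nondegeneracy identity $\dim H^\perp = n - k$, and $\dim M_4 = k-r$ is then forced.

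Next I would record the automatic pairing vanishings between distinct summands in $\{M_1, M_2, M_3\}$: since $M_1 \subset H^\perp$ we have $\langle M_1, H\rangle = 0$, giving $\langle M_1, M_2\rangle = 0$; since $M_1 \subset H$ we have $\langle M_1, M_3\rangle = 0$; and $M_2 \subset H$ together with $M_3 \subset H^\perp$ gives $\langle M_2, M_3\rangle = 0$. The restriction of the form to $M_2$ is then nondegenerate: a vector $v \in M_2$ with $\langle v, M_2\rangle = 0$ also satisfies $\langle v, M_1\rangle = 0$, hence $v \in H \cap H^\perp = M_1$, which forces $v = 0$. The same reasoning applies to $M_3$. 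In particular, both restrictions are perfect.

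The construction of $M_4$ is the heart of the proof. A dimension count shows $M_1^\perp = H + H^\perp = M_1 \oplus M_2 \oplus M_3$. Set $N := (M_2 \oplus M_3)^\perp$; since the form restricts nondegenerately to $M_2 \oplus M_3$, it also restricts nondegenerately to $N$, and $\dim N = 2(k-r)$. The subspace $M_1 \subset N$ is isotropic of dimension $\tfrac{1}{2}\dim N$, hence a Lagrangian in $N$. I would then invoke the standard fact that every Lagrangian in a nondegenerate symmetric or symplectic form over $\mathbb{C}$ admits a complementary Lagrangian---for instance by extending a basis of $M_1$ to a hyperbolic basis of $N$ (symmetric case) or a symplectic basis (skew case)---and take $M_4$ to be such a complement.

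Finally I would verify the remaining properties. The decomposition $V = M_1 \oplus M_2 \oplus M_3 \oplus M_4$ follows from $V = (M_2 \oplus M_3) \oplus N$ together with $N = M_1 \oplus M_4$. The pairings $\langle M_4, M_2\rangle$ and $\langle M_4, M_3\rangle$ vanish because $M_4 \subset N$, and $M_4$ is isotropic by choice. For the pairing $M_1 \times M_4 \to \mathbb{C}$: if $v \in M_4$ satisfies $\langle M_1, v\rangle = 0$ then $v \in M_1^\perp \cap N$, which equals $M_1$ (both sides have dimension $k-r$ inside the $2(k-r)$-dimensional nondegenerate space $N$), whence $v \in M_1 \cap M_4 = 0$; since $\dim M_1 = \dim M_4$, the pairing is then perfect. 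The only nonroutine ingredient is the existence of the Lagrangian complement, and since that is a standard fact, the main obstacle is simply organizing the orthogonality bookkeeping cleanly; everything else is a dimension count.
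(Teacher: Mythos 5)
Your proof is correct, and it follows the same outline as the paper's: set $M_1 = H \cap H^\perp$, take arbitrary complements $M_2 \subset H$ and $M_3 \subset H^\perp$, check the automatic vanishings and the nondegeneracy on $M_2$ and $M_3$, then produce $M_4$ and verify the $M_1 \times M_4$ pairing by a dimension count. The one place you do more work is in the construction of $M_4$, and you are right to: the paper's proof simply says to take \emph{any} complement of $M_1 \oplus M_2 \oplus M_3$ and asserts the pairing properties in \eqref{eqn:v4sum-pair} are then straightforward, but that is not true for an arbitrary complement. For instance, if $\langle{-},{-}\rangle$ is symplectic on $\C^4$ and $H$ is Lagrangian, then $M_2 = M_3 = 0$ and $M_4$ must be a complement to $M_1 = H$; a generic such complement (e.g.\ $\mathrm{span}(e_1 + e_3, e_4)$ against $M_1 = \mathrm{span}(e_1,e_2)$ in a hyperbolic basis with $\langle e_1,e_4\rangle = \langle e_2,e_3\rangle = 1$) is not isotropic, so $\langle M_4, M_4\rangle \neq 0$. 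Your step of passing to $N = (M_2 \oplus M_3)^\perp$, observing that $M_1$ is a Lagrangian of the nondegenerate space $N$, and taking $M_4$ to be a complementary Lagrangian in $N$ is exactly what is needed, and the subsequent verification (that $\langle M_4, M_2 \oplus M_3\rangle = 0$ since $M_4 \subset N$, that $M_4$ is isotropic by choice, and that $M_1^\perp \cap N = M_1$ so the $M_1 \times M_4$ pairing has trivial kernel) is all sound.
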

\begin{proof}
Set $M_1 = H \cap H^\perp$, and choose $M_2$ to be any complement to $M_1$ in $H$.  Choose $M_3$ to be any complement to $M_1$ in $H^\perp$.  Finally, choose $M_4$ to be any complement to $H+ H^\perp = M_1 \oplus M_2 \oplus M_3$.  It is straightforward to check that the restriction of $\lag{-},{-}\rag$ to these summands has the properties claimed in~\eqref{eqn:v4sum-pair}.
\end{proof}

\begin{lem}\label{lem:v4-orbit}
Let $0 \le k \le n$, and let 
\[
\cC_{k,r}(V) = \{ H \in \Gr_k(V) \mid \dim H - \dim H \cap H^\perp = r \}.
\]
If $\cC_{k,r}(V)$ is nonempty, then it is a single $\Iso(V)$-orbit.  Moreover, it is a single $\Isoc(V)$-orbit if one of the following holds:
\begin{itemize}
\item The form $\lag{-},{-}\rag$ is skew-symmetric.
\item The form $\lag{-},{-}\rag$ is symmetric, and either $r > 0$ or $n - 2k +r > 0$.
\end{itemize}
\end{lem}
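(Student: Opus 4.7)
The strategy is to use Lemma~\ref{lem:v4-decomp} to reduce to a ``standard form'' classification. Given $H, H' \in \cC_{k,r}(V)$, apply Lemma~\ref{lem:v4-decomp} to obtain decompositions $V = M_1 \oplus M_2 \oplus M_3 \oplus M_4$ and $V = M_1' \oplus M_2' \oplus M_3' \oplus M_4'$ adapted to $H$ and $H'$ respectively. The summand dimensions depend only on $(n,k,r)$, so they match pairwise. The plan is to construct a linear isomorphism $g: V \to V$ that carries $M_i$ to $M_i'$, preserves $\lag{-},{-}\rag$, and hence lies in $\Iso(V)$ and satisfies $g(H) = H'$.

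To build $g$, note that $\lag{-},{-}\rag$ restricts to a nondegenerate form on $M_2$ of the same kind (symmetric/skew-symmetric) and dimension as on $M_2'$; over $\C$, two such forms are isometric (by Witt's classification — or simply: any complex nondegenerate symmetric form is diagonalizable to the identity matrix, and any nondegenerate skew-symmetric form is standard symplectic). The same applies to $M_3$ versus $M_3'$. For the remaining pair, the restriction of $\lag{-},{-}\rag$ to $M_1 \oplus M_4$ is identically zero on $M_1$ and on $M_4$ and gives a perfect pairing $M_1 \times M_4 \to \C$, so any linear isomorphism $\varphi_1: M_1 \to M_1'$ determines a unique $\varphi_4: M_4 \to M_4'$ via $\lag \varphi_1(m_1), \varphi_4(m_4)\rag = \lag m_1, m_4\rag$, and $\varphi_1 \oplus \varphi_4$ is an isometry of the hyperbolic piece. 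Assembling these four isometries yields the desired $g \in \Iso(V)$ with $g(H) = g(M_1 \oplus M_2) = M_1' \oplus M_2' = H'$.

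For the identity-component claim, the skew-symmetric case is trivial because $\Sp(V)$ is connected, so $\Iso(V) = \Isoc(V)$. In the symmetric case, $\Iso(V)/\Isoc(V) \cong \Z/2\Z$ via the determinant, so it suffices to exhibit an element of $\Stab_{\Iso(V)}(H)$ of determinant $-1$, since then $\Isoc(V) \cdot H = \Iso(V) \cdot H = \cC_{k,r}(V)$. If $r > 0$, choose any reflection $s \in \rO(M_2)$ (which exists because $\dim M_2 = r > 0$ and the restricted form is nondegenerate), and extend it by the identity on $M_1 \oplus M_3 \oplus M_4$; the result lies in $\Iso(V)$, has determinant $-1$, and preserves $H = M_1 \oplus M_2$. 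If instead $n - 2k + r > 0$, do the same with a reflection in $\rO(M_3)$, extended by the identity on $M_1 \oplus M_2 \oplus M_4$; again this preserves $H$ and has determinant $-1$.

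The main obstacle is ensuring the four isometries in the second paragraph really exist and glue correctly — this is where the precise structure established in Lemma~\ref{lem:v4-decomp} (especially the vanishing pattern~\eqref{eqn:v4sum-pair}) is essential, since otherwise the off-diagonal pairings would obstruct the gluing. Everything else is essentially bookkeeping, and the symmetric-case excluded range ($r = 0$ and $n = 2k$, where $M_2 = M_3 = 0$ and $H$ is maximal isotropic) is consistent with the known two-component phenomenon recorded in Lemma~\ref{lem:griso-conn}.
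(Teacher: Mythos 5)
Your proposal is correct and follows essentially the same route as the paper: apply Lemma~\ref{lem:v4-decomp} to both $H$ and $H'$, build a piecewise isometry $g \in \Iso(V)$ from $\GL(M_1)$, $\Iso(M_2)$, $\Iso(M_3)$, and the induced adjoint on $M_4$, and in the symmetric case adjust by a determinant-$(-1)$ element of the stabilizer coming from $\rO(M_2)$ or $\rO(M_3)$. The only cosmetic difference is that the paper phrases the last step as composing $g$ with a reflection when $\det g = -1$, while you phrase it as exhibiting a determinant-$(-1)$ stabilizer element; these are the same argument.
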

\begin{proof}
Suppose $H$ and $H'$ are two points in $\cC_{k,r}(V)$.  Invoke Lemma~\ref{lem:v4-decomp} twice to obtain two decompositions
\begin{align*}
V &= M_1 \oplus M_2 \oplus M_3 \oplus M_4, \\
V &= M_1' \oplus M_2' \oplus M_3' \oplus M_4'.
\end{align*}
Choose any linear isomorphism $g_1: M_1 \simto M'_1$.  Thanks to the pairing properties in~\eqref{eqn:v4sum-pair}, we have an adjoint map $g_1^\tra: M_4' \simto M_4$.  Next, since $M_2$ and $M_2'$ are vector spaces of the same dimension, both equipped with nondegenerate bilinear forms of the same type (either symmetric or skew-symmetric), there exists a pairing-preserving isomorphism $g_2: M_2 \simto M_2'$.  We similarly get a pairing-preserving isomorphism $g_3: M_3 \simto M_3'$.  Let
\[
g= g_1 \oplus g_2 \oplus g_3 \oplus (g_1^\tra)^{-1}: V \to V.
\]
This is a linear automorphism of $V$ that preserves the bilinear form, i.e., an element of $\Iso(V)$.  By construction, $g(H) = H'$, so $H$ and $H'$ are in the same $\Iso(V)$-orbit.

If $\lag{-},{-}\rag$ is skew-symmetric, then $\Iso(V) = \Isoc(V) = \Sp(V)$, so $H$ and $H'$ are in the same $\Isoc(V)$-orbit.

Suppose now that $\lag{-},{-}\rag$ is symmetric, and that $r > 0$.  If $\det g = 1$, then $g \in \SO(V)$, and we already have that $H$ and $H'$ are in the same $\Isoc(V)$-orbit.  Now suppose that $\det g = -1$.  Since $\dim M_2 = r > 0$, there exists an element $s \in \rO(M_2) \smallsetminus \SO(M_2)$, i.e., an automorphism of $M_2$ with determinant $-1$.  Let
\[
g' = g \circ (\id_{M_1} \oplus s \oplus \id_{M_3} \oplus \id_{M_4}).
\]
Then $g'$ also sends $H$ to $H'$, but it has determinant $1$, so $H$ and $H'$ are in the same orbit for $\Isoc(V) = \SO(V)$.

Finally,  if $\lag{-},{-}\rag$ is symmetric and $n - 2k + r > 0$, similar reasoning applies using an element of $\rO(M_3) \smallsetminus \SO(M_3)$.
\end{proof}

Suppose we have chosen a decomposition of $V$ as in~\eqref{eqn:v4sum}.  Given a linear map $\theta: V \to V$, we can write it as a $4 \times 4$ matrix with respect to this decomposition. Let $\theta^\tra: V \to V$ be the adjoint map to $\theta$, i.e., the map such that $\lag \theta({-}),{-}\rag = \lag {-},\theta^\tra({-})\rag$.  Thanks to~\eqref{eqn:v4sum-pair}, the adjoint can be computed ``entrywise,'' as follows:
\begin{equation}\label{eqn:v4-adjoint}
\text{if}\quad
\theta = 
\left[\begin{smallmatrix}
a & b & c & d \\
e & f & g & h \\
i & j & k & l \\
m & n & p & q
\end{smallmatrix}\right]
\qquad\text{then}\qquad
\theta^\tra = 
\left[\begin{smallmatrix}
q^\tra & h^\tra & l^\tra & d^\tra \\
n^\tra & f^\tra & j^\tra & b^\tra \\
p^\tra & g^\tra & k^\tra & c^\tra \\
m^\tra & e^\tra & i^\tra & a^\tra
\end{smallmatrix}\right].
\end{equation}

\begin{lem}\label{lem:v4-stab}
Let $H \subset V$ be a subspace of dimension $k$, and choose a decomposition of $V$ as in Lemma~\ref{lem:v4-decomp}.  Then the stabilizer of $H$ in $\Iso(V)$ is given by
\[
\Stab_{\Iso(V)}(H) \cong (\GL(M_1) \times \Iso(M_2) \times \Iso(M_3)) \ltimes A,
\]
where $A \subset \GL(V)$ is the unipotent group given by
\[
A = 
\left\{
\left[\begin{smallmatrix}
1 & b & c & d \\
& 1 & & -b^\tra  \\
& & 1 & -c^\tra  \\
& & & 1
\end{smallmatrix}
\right]
\, \Bigg|\, d + d^\tra = -bb^\tra - cc^\tra \right\}.
\]
\end{lem}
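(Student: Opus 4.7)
The plan is to compute the stabilizer directly in the block-matrix form afforded by the decomposition $V = M_1 \oplus M_2 \oplus M_3 \oplus M_4$, imposing in turn the orbit condition $\theta(H)=H$ and the isometry condition $\theta^\tra\theta=\id$. First I observe that any $\theta\in\Iso(V)$ sending $H$ to $H$ automatically sends $H^\perp$ to $H^\perp$, and hence also $M_1=H\cap H^\perp$ to $M_1$. Combined with $\theta(M_1\oplus M_2)\subset M_1\oplus M_2$ and $\theta(M_1\oplus M_3)\subset M_1\oplus M_3$, this forces the seven entries $e,g,i,j,m,n,p$ of the matrix in~\eqref{eqn:v4-adjoint} to vanish, leaving $\theta$ upper block-triangular with free entries $a,b,c,d,f,h,k,l,q$.

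Next I impose $\theta^\tra\theta=\id$, using~\eqref{eqn:v4-adjoint} to read off $\theta^\tra$ and then multiplying. The four diagonal entries give $q^\tra a=a^\tra q=\id$, $f^\tra f=\id$, and $k^\tra k=\id$, so $a\in\GL(M_1)$ is arbitrary with $q=(a^\tra)^{-1}$, while $f\in\Iso(M_2)$ and $k\in\Iso(M_3)$. The off-diagonal entries $(1,2)$ and $(1,3)$ read $q^\tra b+h^\tra f=0$ and $q^\tra c+l^\tra k=0$, which let me solve uniquely for $h$ and $l$ in terms of $b,c$ and the diagonal data; in the special case $a=f=k=q=\id$ they specialize to $h=-b^\tra$ and $l=-c^\tra$. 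The $(1,4)$ entry reads $q^\tra d+h^\tra h+l^\tra l+d^\tra q=0$, which in the same special case becomes exactly the relation $d+d^\tra=-bb^\tra-cc^\tra$ defining $A$.

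To assemble the semidirect product, I will take $L\subset\Stab_{\Iso(V)}(H)$ to be the block-diagonal subgroup $\{\mathrm{diag}(a,f,k,(a^\tra)^{-1})\}$, identified with $\GL(M_1)\times\Iso(M_2)\times\Iso(M_3)$, and $A$ to be the subgroup with all diagonal blocks equal to $\id$, which by the previous paragraph is exactly the set described in the lemma. Every $\theta\in\Stab$ decomposes uniquely as $\ell u$ with $\ell$ its diagonal part and $u=\ell^{-1}\theta\in A$, and $L\cap A=\{\id\}$. For normality of $A$ I will invoke its intrinsic characterization: $A$ is precisely the kernel of the action of $\Stab$ on the canonical $\Stab$-stable subquotients $M_1$, $H/M_1$, $H^\perp/M_1$, and $V/(H+H^\perp)$. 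The main obstacle is the routine but error-prone bookkeeping of tracking which entry lies in which $\Hom(M_j,M_i)$ and how the pairing identities of~\eqref{eqn:v4sum-pair} and~\eqref{eqn:v4-adjoint} (notably the double-transpose relation $(g^\tra)^\tra=g$, which holds in both the symmetric and skew-symmetric cases) produce the precise signs and transposes appearing in the definition of $A$.
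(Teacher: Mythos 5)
Your proposal is correct and follows essentially the same route as the paper: reduce to block upper-triangular matrices by noting that preserving $H$ forces preservation of $H^\perp$ and $M_1 = H \cap H^\perp$, then impose the isometry condition to pin down the diagonal blocks as $\GL(M_1)\times\Iso(M_2)\times\Iso(M_3)$ (with the fourth block determined by the first) and solve for the off-diagonal entries. The only organizational difference is that the paper computes $\theta^{-1}$ explicitly and equates it with $\theta^\tra$, then exhibits the factorization $\theta = \ell u$ by explicit matrix multiplication and a change of variables $b \mapsto a^{-1}b$, etc., whereas you set $\theta^\tra\theta = \id$ and read off the constraints entry by entry; both are equivalent bookkeeping. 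Your intrinsic characterization of $A$ as the kernel of the action of $\Stab_{\Iso(V)}(H)$ on the canonical subquotients $M_1$, $H/M_1$, $H^\perp/M_1$, $V/(H+H^\perp)$ is a slightly cleaner way to see normality than the paper's ``appropriate change of coordinates'' remark, and your appeal to $(g^\tra)^\tra = g$ (valid in both the symmetric and skew cases, since the sign $\varepsilon$ enters twice) is exactly the fact needed to turn $h^\tra = -b$ into $h = -b^\tra$.
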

\begin{proof}
Let $H = M_1 \oplus M_2$.  Then
\[
H^\perp = M_1 \oplus M_3
\qquad\text{and}\qquad H \cap H^\perp = M_1.
\]
An element of $\Iso(V)$ that preserves $H$ must also preserve $H^\perp$ and $H \cap H^\perp$.  If $\theta \in \GL(V)$ preserves $H$, $H^\perp$, and $H \cap H^\perp$, then it has the form
\[
\theta = \left[\begin{smallmatrix}
a & b & c & d \\
 & e & & f \\
 & & g & h \\
 &&& k
 \end{smallmatrix}\right]
\]
where $a$, $e$, $g$, and $k$ are invertible.   Then $\theta^\tra$ and $\theta^{-1}$ are given by
\[
\left[\begin{smallmatrix}
k^\tra & f^\tra & h^\tra & d^\tra \\
 & e^\tra & & b^\tra \\
 & & g^\tra & c^\tra \\
 &&& a^\tra
\end{smallmatrix}\right]
\quad\text{and}\quad
\left[\begin{smallmatrix}
a^{-1} & -a^{-1}be^{-1} & -a^{-1}cg^{-1} & a^{-1}be^{-1}fk^{-1} + a^{-1}cg^{-1}hk^{-1} - a^{-1}dk^{-1}\\
 & e^{-1} & & -e^{-1}f k^{-1} \\
 & & g^{-1} & -g^{-1}hk^{-1} \\
 &&& k^{-1}
 \end{smallmatrix}\right]
\]
respectively.  For $\theta$ to lie in $\Iso(V)$, we must have $\theta^\tra = \theta^{-1}$.  We deduce that
\[
\Stab_{\Iso(V)}(H) = 
\left\{
\left[\begin{smallmatrix}
a & b & c & d \\
& e & & -eb^\tra (a^\tra)^{-1} \\
& & g & -gc^\tra (a^\tra)^{-1} \\
& & & (a^\tra)^{-1}
\end{smallmatrix}
\right]
\, \Bigg|\,
\begin{array}{@{}c@{}}
\text{$a \in \GL(M_1)$, $e \in \Iso(M_2)$, $g \in \Iso(M_3)$,} \\
\text{and $d^\tra = - a^{-1} b b^\tra - a^{-1} c c^\tra - a^{-1} d a^\tra$}
\end{array}
\right\}
\]
This can be rewritten as
\[\left\{\left[ \begin{smallmatrix}
	a &&&\\ &e&&\\ && g& \\&&& (a^{\tra})^{-1}  
\end{smallmatrix}\right]
\left[\begin{smallmatrix}
1 & a^{-1}b & a^{-1}c & a^{-1}d \\
& 1 & & -b^\tra (a^\tra)^{-1} \\
& & 1 & -c^\tra (a^\tra)^{-1} \\
& & & 1
\end{smallmatrix}
\right]
\, \Bigg|\,
\begin{array}{@{}c@{}}
\text{$a \in \GL(M_1)$, $e \in \Iso(M_2)$, $g \in \Iso(M_3)$,} \\
\text{and $d^\tra = - a^{-1} b b^\tra - a^{-1} c c^\tra - a^{-1} d a^\tra$}
\end{array}
\right\},
\]
and after an appropriate change of coordinates, this matches the statement of the lemma.
\end{proof}

\begin{rmk}\label{rmk:giso-defn}
Later in the paper, we will occasionally need to consider the group
\[
\GIsoc(V) = \C^\times \times \Isoc(V).
\]
Let $\GIsoc(V)$ act on $V$ by having the $\C^\times$ factor act by scaling.  Of course, in the induced action on $\Gr_k(V)$, the $\C^\times$-factor acts trivially.  It is easy to see that:
\begin{enumerate}
\item The $\GIsoc(V)$-orbits on $\Gr_k(V)$ coincide with the $\Isoc(V)$-orbits.
\item For any $H \in \Gr_k(V)$, the natural map
\[
\Stab_{\Isoc(V)}(H)/\Stab_{\Isoc(V)}(H)^\circ \to \Stab_{\GIsoc(V)}(H)/\Stab_{\GIsoc(V)}(H)^\circ
\]
is a bijection.  As a consequence, any $\Isoc(V)$-equivariant local system on an $\Isoc(V)$-orbit $\cC$ is also $\GIsoc(V)$-equivariant.
\end{enumerate}
\end{rmk}

\subsection{Tangent spaces}
\label{ss:m1-tangent}

In this subsection, we use Lemma~\ref{lem:v4-decomp} to carry out some computations in the tangent space to a point of $\Gr_k(V)$.

\begin{lem}\label{lem:gr-tan}
Let $V$ be a finite-dimensional vector space, and let $H \in \Gr_k(V)$.  Let $T_H\Gr_k(V)$ be the tangent space to $\Gr_k(V)$ at $H$.
\begin{enumerate}
\item There is a canonical isomorphism\label{it:gr-tan}
\[
T_H\Gr_k(V) \cong \Hom(H,V/H).
\]
\item There is a (noncanonical) open immersion\label{it:gr-open}
\[
\Hom(H,V/H) \hookrightarrow \Gr_k(V)
\]
that sends $0 \in \Hom(H,V/H)$ to $H \in \Gr_k(V)$, and that induces the isomorphism of tangent spaces from part~\eqref{it:gr-tan}.
\end{enumerate}
\end{lem}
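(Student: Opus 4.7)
The plan is to prove part (2) first, by the classical ``graph of a linear map'' construction, and then derive part (1) from it. Namely, choose any linear complement $H' \subset V$ to $H$, so that $V = H \oplus H'$ and the projection $V \twoheadrightarrow V/H$ restricts to an isomorphism $H' \simto V/H$. I will then define
\[
\Psi_{H'}: \Hom(H,V/H) \to \Gr_k(V), \qquad \phi \mapsto \{\, v + \tilde\phi(v) \mid v \in H\,\},
\]
where $\tilde\phi: H \to H'$ denotes $\phi$ followed by the inverse of $H' \simto V/H$. The image of $\Psi_{H'}$ is the subset $U_{H'} = \{K \in \Gr_k(V) \mid K \cap H' = 0\}$, which is open in $\Gr_k(V)$ (since nontrivial intersection is a closed condition). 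A straightforward computation in coordinates (picking a basis adapted to the decomposition $V = H \oplus H'$) shows that $\Psi_{H'}$ is a morphism of algebraic varieties and that the inverse $U_{H'} \to \Hom(H,V/H)$, which sends a subspace $K$ to the unique linear map whose graph is $K$, is also a morphism. This gives the desired open immersion, and by construction it sends $0$ to $H$.

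For part (1), the open immersion $\Psi_{H'}$ identifies
\[
T_H \Gr_k(V) \cong T_0 \Hom(H,V/H) = \Hom(H,V/H).
\]
The only thing to verify is that this identification is independent of the choice of complement $H'$. Given two complements $H'$ and $H''$, both $\Psi_{H'}$ and $\Psi_{H''}$ send $0$ to $H$, and on the intersection of their images one can check directly that the differential at $0$ of the transition map $\Psi_{H''}^{-1} \circ \Psi_{H'}$ equals $\id_{\Hom(H,V/H)}$: the tangent vector computation reduces to extracting the first-order term of $v + \tilde\phi(v)$, and the projection $V \twoheadrightarrow V/H$ kills the ambiguity in how that vector is decomposed across the two complements.

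Alternatively, and perhaps more cleanly for exposition, I would give an intrinsic description of part (1) via $\Spec \C[\epsilon]/(\epsilon^2)$-valued points: a tangent vector at $H$ corresponds to a free $\C[\epsilon]/(\epsilon^2)$-submodule $\mathcal{H} \subset V \otimes \C[\epsilon]/(\epsilon^2)$ of rank $k$ with $\mathcal{H}/\epsilon \mathcal{H} = H$, and one sends such an $\mathcal{H}$ to the map $H \to V/H$ defined by lifting $v \in H$ to $\tilde v = v + \epsilon w \in \mathcal{H}$ and taking $v \mapsto w \bmod H$; independence of the lift is immediate. One then verifies that $\Psi_{H'}$ realizes this intrinsic isomorphism on tangent spaces, which gives part (2) refining part (1).

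There is no serious obstacle: the only mild subtlety is checking that the identification in part (1) does not depend on the complement used in part (2), and this is handled either by the transition-map computation above or by giving the intrinsic $\C[\epsilon]/(\epsilon^2)$-description first. The rest is routine linear algebra and a standard coordinate chart computation for the Grassmannian.
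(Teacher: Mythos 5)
Your proposal is correct and its part~(2) uses essentially the same graph/open-cell chart as the paper does; the difference is in how part~(1) is handled. The paper proves~(1) first and intrinsically: it observes that $\GL(V)$ acts transitively on $\Gr_k(V)$, so $T_H\Gr_k(V) \cong \fgl(V)/\fp$ where $\fp = \Lie\bigl(\Stab_{\GL(V)}(H)\bigr)$ is the parabolic of endomorphisms preserving $H$, and then notes that $\fp$ is exactly the kernel of the natural map $\End(V) \to \Hom(H,V/H)$ (restrict to $H$, project to $V/H$). This makes the isomorphism manifestly canonical with no need to verify independence of any auxiliary choices; the complement $H'$ appears only as computational bookkeeping. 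You instead prove~(2) first and then deduce~(1), which forces you to check that the resulting identification of $T_H\Gr_k(V)$ with $\Hom(H,V/H)$ is independent of the chosen complement $H'$---either by a transition-map computation or via the $\C[\epsilon]/(\epsilon^2)$-points description you sketch. Both of your workarounds are sound, and your dual-numbers alternative is arguably the cleanest intrinsic argument of all; but the paper's $\fgl(V)/\fp$ route gets canonicity for free and is shorter, which is probably why it was chosen. Either way the substance is the same and there is no gap.
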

\begin{proof}
\eqref{it:gr-tan}~The group $\GL(V)$ acts transitively on $\Gr_k(V)$.  Let $P$ be the stabilizer of $H$ in $\GL(V)$; this is a parabolic subgroup.  Then $T_H\Gr_k(V)$ is identified with the quotient of $\fgl(V)$ by the subspace $\fp = \Lie(P)$.  Choose a complement $H'$ to $H$ in $V$.  We of course have identifications
\[
\fgl(V) = \End(V) = \Hom(H \oplus H',V) = \Hom(H,V) \oplus \Hom(H',V).
\]
The subspace $\fp \subset \fgl(V)$ is the subspace of $\End(V)$ consisting of maps that preserve $H$: thus,
\[
\fp = \Hom(H,H) \oplus \Hom(H',V).
\]
The cokernel of $\fp \hookrightarrow \fgl(V)$ is thus identified with the cokernel of $\Hom(H,H) \hookrightarrow \Hom(H,V)$, which is $\Hom(H,V/H)$.

\eqref{it:gr-open}~As above, let $H'$ be a complement to $H$.  Identifying $V = H \oplus H'$, we have two projection maps $p: V \to H$ and $p': V \to H'$.  Let
\begin{align*}
W &= \{M \in \Gr_k(V) \mid M \cap H' = 0 \} \\
&= \{ M \in \Gr_k(V) \mid \text{$p|_M: M \to H$ is an isomorphism} \}.
\end{align*}
This is an open subset of $\Gr_k(V)$ that contains $H$.  For $M \in W$, let $\theta_M: H \to H'$ be the composition
\[
H \xrightarrow[\sim]{(p|_M)^{-1}} M \xrightarrow{p'|_M} H'.
\]
It is easily checked that $M \mapsto \theta_m$ defines an isomorphism $W \cong \Hom(H,H') = \Hom(H,V/H)$.
\end{proof}

In the setting of Lemma~\ref{lem:v4-decomp}, the subspace $H \subset V$ has a specified complement $M_3 \oplus M_4$, so Lemma~\ref{lem:gr-tan} lets us identify
\begin{equation}\label{eqn:gr-tan-decomp}
T_H\Gr_k(V) \cong \Hom(H, M_3 \oplus M_4) = \Hom(M_1 \oplus M_2, M_3 \oplus M_4).
\end{equation}
We can therefore write elements of this tangent space as $2 \times 2$ matrices.

\begin{lem}\label{lem:gr-tan-orbit}
Let $H \subset V$ be a subspace of dimension $k$, and choose a decomposition of $V$ as in Lemma~\ref{lem:v4-decomp}.  Let $\cC$ be the $\Iso(V)$-orbit of $H$ in $\Gr_k(V)$.  The tangent space to $\cC$ at $H$ is given by
\[
T_H\cC = 
\left\{
\begin{bmatrix}
a & b \\
c & d
\end{bmatrix} \in \Hom(M_1 \oplus M_2, M_3 \oplus M_4)
\,\Big|\,
c^\tra = - c
\right\}
\]
\end{lem}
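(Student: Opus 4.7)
The plan is to identify $T_H\cC$ as the image of the differential of the orbit map $\Iso(V) \to \cC$, $g \mapsto g \cdot H$, and then to read off the constraint directly from the adjoint formula~\eqref{eqn:v4-adjoint}. The Lie algebra of $\Iso(V)$ is $\mathfrak{iso}(V) = \{\theta \in \fgl(V) \mid \theta^\tra = -\theta\}$, and unwinding the identification of Lemma~\ref{lem:gr-tan}\eqref{it:gr-tan} shows that the differential sends $\theta$ to the map $H \to V/H$ induced by $\theta|_H$. So $T_H\cC$ is the image of $\mathfrak{iso}(V) \to \Hom(H, V/H)$ given by $\theta \mapsto \theta|_H \bmod H$.

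Next, I would use the decomposition of Lemma~\ref{lem:v4-decomp} to write $\theta$ as a $4 \times 4$ block matrix with entries $\theta_{ij} \colon M_j \to M_i$. Since $H = M_1 \oplus M_2$ and the complement $M_3 \oplus M_4$ identifies $V/H$ with $M_3 \oplus M_4$ as in~\eqref{eqn:gr-tan-decomp}, the map $\theta \mapsto \theta|_H \bmod H$ simply extracts the lower-left $2 \times 2$ block $\bigl[\begin{smallmatrix}\theta_{31} & \theta_{32}\\ \theta_{41} & \theta_{42}\end{smallmatrix}\bigr] = \bigl[\begin{smallmatrix}a & b\\ c & d\end{smallmatrix}\bigr]$. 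So the content of the lemma is a statement about which such blocks arise from matrices satisfying $\theta^\tra = -\theta$.

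I would then use~\eqref{eqn:v4-adjoint} to compare $\theta^\tra$ with $-\theta$ entry by entry. The $(3,1)$-entry gives $\theta_{43}^\tra = -\theta_{31}$, i.e., it determines $\theta_{43}$ in terms of $a$ without constraining $a$; analogously, the $(3,2)$- and $(4,2)$-entries determine $\theta_{23}$ and $\theta_{21}$ in terms of $b$ and $d$ without constraining $b$ or $d$. The key observation is that the $(4,1)$-entry gives $\theta_{41}^\tra = -\theta_{41}$, i.e., $c^\tra = -c$. No other entry produces a further constraint on $(a,b,c,d)$. This shows the image is contained in the set described in the lemma.

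For the reverse inclusion I would exhibit a lift: given $(a,b,c,d)$ with $c^\tra = -c$, define $\theta$ by setting $\theta_{31} = a$, $\theta_{32} = b$, $\theta_{41} = c$, $\theta_{42} = d$, together with the forced values $\theta_{43} = -a^\tra$, $\theta_{23} = -b^\tra$, $\theta_{21} = -d^\tra$, and all remaining blocks zero. A direct check against~\eqref{eqn:v4-adjoint} shows $\theta^\tra = -\theta$, so $\theta \in \mathfrak{iso}(V)$ and it maps to $(a,b,c,d)$. The only step that requires any care is ensuring that $c^\tra$ actually lies in $\Hom(M_1, M_4)$ (so that the equation $c^\tra = -c$ is meaningful), which follows from~\eqref{eqn:v4-adjoint} together with the fact that the pairing $\lag{-},{-}\rag_{|M_1 \times M_4}$ is perfect; beyond that, the proof is a bookkeeping exercise with no real obstacle.
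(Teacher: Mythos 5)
Your argument is correct and follows essentially the same route as the paper: identify $T_H\cC$ with the image of $\Lie(\Iso(V))\to\Hom(H,V/H)$, use the decomposition from Lemma~\ref{lem:v4-decomp} to write this as extraction of the lower-left $2\times 2$ block, and read off from~\eqref{eqn:v4-adjoint} that the only surviving constraint is $c^\tra=-c$. The explicit lift you give for the reverse inclusion is precisely what the paper's displayed description of $\Lie(\Iso(V))$ records implicitly.
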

\begin{proof}
From the proof of Lemma~\ref{lem:gr-tan}, the map $\fgl(V) \to T_H\Gr_k(V)$ is given by
\begin{equation}\label{eqn:tansp-map}
\left[\begin{smallmatrix}
a & b & c & d \\
e & f & g & h \\
i & j & k & l \\
m & n & p & q
\end{smallmatrix}\right]
\mapsto 
\left[\begin{smallmatrix}
i & j \\
m & n 
\end{smallmatrix}\right]
\end{equation}
To compute $T_H\cC$, we must compute the image of $\Lie(\Iso(V)) \subset \fgl(V)$ under this map.  Since $\Lie(\Iso(V)) = \{ \theta \in \fgl(V) \mid \theta^\tra = -\theta \}$, we see from~\eqref{eqn:v4-adjoint} that
\[
\Lie(\Iso(V))
=
\left\{
\left[\begin{smallmatrix}
a & b & c & d \\
-n^\tra & f & -j^\tra & -b^\tra \\
i & j & k & -c^\tra \\
m & n & -\tra & -a^\tra
\end{smallmatrix}
\right]
\, \Bigg|\,
\begin{array}{@{}c@{}}
\text{$a$, $b$, $c$, $i$, $j$, $n$ arbitrary, and} \\
\text{$d^\tra = -d$, $f^\tra = -f$, $k^\tra = -k$, and $m^\tra = -m$}
\end{array}
\right\}.
\]
The image of this under~\eqref{eqn:tansp-map} is the space described in the statement of the lemma.
\end{proof}

\begin{cor}\label{cor:gr-tan-slice}
Let $H \subset V$ be a subspace of dimension $k$, and choose a decomposition of $V$ as in Lemma~\ref{lem:v4-decomp}. Let $\cC$ be the $\Iso(V)$-orbit of $H$ in $\Gr_k(V)$.  There is a transverse slice $S$ to $\cC$ and $H$ and an isomorphism
\[
S \cong \{ c \in \Hom(M_1, M_4) \mid c^\tra = c \}
\]
that identifies $H \in S$ with $0 \in \Hom(M_1,M_4)$.

Moreover, there is a cocharacter $\psi: \Gm \to \Iso(V)$ such that the induced $\Gm$-action on $\Gr_k(V)$ preserves $S$, and acts on it linearly with strictly positive weights.
\end{cor}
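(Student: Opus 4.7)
The plan is to build the slice $S$ as a well-chosen subvariety of the open neighborhood of $H$ supplied by Lemma~\ref{lem:gr-tan}\eqref{it:gr-open}, and then produce the cocharacter by exploiting the pairing relations~\eqref{eqn:v4sum-pair} between the summands.

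First I would use the complement $M_3 \oplus M_4$ to $H = M_1 \oplus M_2$ to identify $V/H \cong M_3 \oplus M_4$, and apply Lemma~\ref{lem:gr-tan}\eqref{it:gr-open} to obtain an open immersion
\[
\iota : \Hom(M_1 \oplus M_2, M_3 \oplus M_4) \hookrightarrow \Gr_k(V),
\qquad \iota(\theta) = \{v + \theta(v) \mid v \in H\},
\]
sending $0$ to $H$ and realizing the identification~\eqref{eqn:gr-tan-decomp} on tangent spaces. I would then define
\[
S = \iota(N), \qquad N = \{c \in \Hom(M_1, M_4) \mid c^\tra = c\},
\]
regarding $N$ as a subspace of $\Hom(H, V/H)$ by extension by zero on the other three components. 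The isomorphism $S \cong \{c \in \Hom(M_1,M_4) \mid c^\tra = c\}$ sending $H$ to $0$ is then tautological.

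Next I would verify transversality. By Lemma~\ref{lem:gr-tan-orbit}, under the identification~\eqref{eqn:gr-tan-decomp}, $T_H\cC$ is cut out by the single condition $c^\tra = -c$ on the $(M_1,M_4)$-entry, with the other three entries arbitrary. Since $T_H S$ corresponds to maps whose only nonzero entry is a symmetric $c : M_1 \to M_4$, and since symmetric and skew-symmetric matrices intersect in $0$ and span $\Hom(M_1,M_4)$ (as $2$ is invertible), we get $T_H S \oplus T_H\cC = T_H \Gr_k(V)$.

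Finally, for the cocharacter I would take
\[
\psi(t) = t^{-1}\,\id_{M_1} \oplus \id_{M_2} \oplus \id_{M_3} \oplus t\,\id_{M_4}.
\]
The pairing table~\eqref{eqn:v4sum-pair} shows that only the $M_1 \times M_4$ and $M_4 \times M_1$ pairings (and the self-pairings on $M_2$ and $M_3$) are nonzero, and on all of these the scaling factors cancel, so $\psi(t)$ preserves $\lag{-},{-}\rag$; since $\Gm$ is connected, its image lies in $\Isoc(V)$. The induced action fixes $H$, and for $c \in N$ the change of variable $v_1' = t^{-1} v_1$ yields $\psi(t)\cdot\iota(c) = \iota(t^2 c)$, so $\psi$ preserves $S$ and acts linearly on it with strictly positive weight $2$. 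The only mild obstacle is picking the right sign in $\psi$ to make the weights positive rather than negative; everything else is formal from the preceding lemmas.
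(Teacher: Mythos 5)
Your proof is correct and follows essentially the same route as the paper: build $S$ from the affine chart of Lemma~\ref{lem:gr-tan}\eqref{it:gr-open}, verify transversality using the description of $T_H\cC$ from Lemma~\ref{lem:gr-tan-orbit}, and use exactly the cocharacter $\psi(t) = t^{-1}\id_{M_1}\oplus\id_{M_2}\oplus\id_{M_3}\oplus t\,\id_{M_4}$. The only cosmetic differences are that you verify $\psi(t)\in\Iso(V)$ directly from the pairing table~\eqref{eqn:v4sum-pair} rather than citing Lemma~\ref{lem:v4-stab}, and your remark about $2$ being invertible is automatically satisfied since everything is over $\C$.
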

\begin{proof}
It is clear from~\eqref{eqn:gr-tan-decomp} and Lemma~\ref{lem:gr-tan-orbit} that $\{ c \in \Hom(M_1, M_4) \mid c^\tra = c \}$ is a linear complement to $T_H \cC$ in $T_H \Gr_k(V)$.  Via Lemma~\ref{lem:gr-tan}\eqref{it:gr-open}, we can identify this subspace with a locally closed subvariety $S \subset \Gr_k(V)$ that meets $\cC$ transversely.

Next, let $\psi: \Gm \to \GL(V)$ be the map given by
\[
\psi(z)v = 
\begin{cases}
z^{-1}v & \text{if $v \in M_1$,} \\
zv & \text{if $v \in M_4$,} \\
v & \text{if $v \in M_2$ or $v \in M_3$.}
\end{cases}
\]
This map actually takes values in $\Iso(V) \subset \GL(V)$, and even in $\Stab_{\Iso(V)}(H)$ (by Lemma~\ref{lem:v4-stab}).  The induced action on $\Hom(M_1,M_4)$ is given by $z \cdot a = z^2a$.
\end{proof}

\subsection{The symplectic case}
\label{ss:orbit-symp}

We now specialize to the symplectic case.  In this subsection, assume that
\[
\lag{-},{-}\rag: V \times V \to \C
\]
is skew-symmetric.

\begin{prop}\label{prop:orbit-symp}
Suppose $V$ is equipped with nondegenerate skew-symmetric bilinear form.  For $0 \le k \le n$, let $\cC_{k,r}(V) \subset \Gr_k(V)$ be as in Lemma~\ref{lem:v4-orbit}.
\begin{enumerate}
\item The set $\cC_{k,r}(V)$ is nonempty if and only if\label{it:orbit-sympne}
\[
r \equiv 0 \pmod 2 \qquad\text{and}\qquad 
\max\{0, 2k - n\} \le r \le k.
\]
\item If $\cC_{k,r}(V)$ is nonempty, it is a single $\Sp(V)$-orbit.\label{it:orbit-single}
\item The stabilizer in $\Sp(V)$ of any point in $\cC_{k,r}(V)$ is connected.\label{it:orbit-stab}
\item For any commutative ring $\bk$, $\coh^\bullet_{\Sp(V)}(\cC_{k,r}(V),\bk)$ is even and free.\label{it:orbit-eqcoh}
\end{enumerate}
\end{prop}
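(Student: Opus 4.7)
The plan is to combine the structural results already established in this section. For part~\eqref{it:orbit-sympne}, I would first establish necessity: any $H \in \cC_{k,r}(V)$ has $H/(H \cap H^\perp)$ inheriting a nondegenerate skew-symmetric form, which forces $\dim H/(H \cap H^\perp) = r$ to be even. The containments $H \cap H^\perp \subseteq H$ and $H \cap H^\perp \subseteq H^\perp$ give $k - r \ge 0$ and $k - r \le n - k$ respectively, producing the stated range. For sufficiency, I would construct an explicit decomposition $V = M_1 \oplus M_2 \oplus M_3 \oplus M_4$ with dimensions $k-r,\ r,\ n-2k+r,\ k-r$ and pairing properties as in Lemma~\ref{lem:v4-decomp}; note that $n$ is necessarily even in the symplectic case, so $n - 2k + r$ is even whenever $r$ is, guaranteeing that $M_2$ and $M_3$ can carry nondegenerate skew-symmetric forms. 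Taking $H = M_1 \oplus M_2$ then produces a point of $\cC_{k,r}(V)$.

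Parts~\eqref{it:orbit-single} and~\eqref{it:orbit-stab} are essentially direct quotations of earlier lemmas. Part~\eqref{it:orbit-single} is the skew-symmetric case of Lemma~\ref{lem:v4-orbit}. For part~\eqref{it:orbit-stab}, Lemma~\ref{lem:v4-stab} identifies the stabilizer with $(\GL(M_1) \times \Sp(M_2) \times \Sp(M_3)) \ltimes A$; each factor in the Levi part is connected and $A$ is unipotent hence connected, so the whole stabilizer is connected.

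For part~\eqref{it:orbit-eqcoh}, combining~\eqref{it:orbit-single} and~\eqref{it:orbit-stab} gives $\cC_{k,r}(V) \cong \Sp(V)/K$ for some connected closed subgroup $K$, whence
\[
\coh^\bullet_{\Sp(V)}(\cC_{k,r}(V),\bk) \cong \coh^\bullet(BK,\bk).
\]
The unipotent radical $A$ of $K$ is topologically contractible, so the inclusion of the reductive quotient $L = \GL(M_1) \times \Sp(M_2) \times \Sp(M_3)$ into $K$ is a homotopy equivalence and $BK \simeq BL$. The integral cohomology $\coh^\bullet(BL,\Z)$ is the tensor product of the Chern-class polynomial ring for the $B\GL$ factor with the symplectic polynomial rings for the $B\Sp$ factors, all concentrated in even degree and torsion-free; by the universal coefficient theorem it remains even and finitely generated free in each degree after base change to $\bk$.

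The main obstacle is probably part~\eqref{it:orbit-eqcoh}, whose cleanest treatment requires invoking the standard computations of $\coh^\bullet(B\GL_n,\Z)$ and $\coh^\bullet(B\Sp_{2n},\Z)$ and the fact that passing to the unipotent radical does not alter the homotopy type of the classifying space; the remaining parts are formal consequences of Lemmas~\ref{lem:v4-decomp}--\ref{lem:v4-stab} together with a parity-matching construction.
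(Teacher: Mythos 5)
Your proposal is correct and follows the same overall structure as the paper's proof. Parts~\eqref{it:orbit-single} and~\eqref{it:orbit-stab} are handled identically (direct quotations of Lemmas~\ref{lem:v4-orbit} and~\ref{lem:v4-stab}), and your necessity argument for~\eqref{it:orbit-sympne} is exactly the paper's. For sufficiency you build a decomposition $V = M_1 \oplus M_2 \oplus M_3 \oplus M_4$ abstractly and set $H = M_1 \oplus M_2$; the paper instead names a specific subspace in a symplectic basis, but the two constructions are the same up to packaging (both amount to choosing a standard symplectic basis adapted to the required block structure). One small difference: the paper first reduces to $k \le n/2$ via $H \mapsto H^\perp$, which you avoid at the cost of having to handle a slightly wider range of parameters in the existence argument; as you observe, the parity check $n - 2k + r \equiv r \pmod 2$ (since $n$ is even) makes that harmless.

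The only genuinely different route is part~\eqref{it:orbit-eqcoh}. You compute directly: $\coh^\bullet_{\Sp(V)}(\cC_{k,r}) \cong \coh^\bullet(BK,\bk)$, use contractibility of the unipotent radical to reduce to $BL$ with $L = \GL(M_1)\times\Sp(M_2)\times\Sp(M_3)$, and then cite the known integral cohomology of $B\GL_n$ and $B\Sp_{2n}$ (polynomial rings in even degrees, torsion-free) together with the universal coefficient theorem. The paper instead appeals to the abstract theory of torsion primes, citing \cite[Theorem~6.7.9]{a:book} and the fact from \cite[\S4.4]{ss:cc} that $\GL\times\Sp\times\Sp$ has no torsion primes. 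The two arguments rest on the same underlying facts — ``no torsion primes'' for these groups is essentially the statement that $\coh^\bullet(BG,\Z)$ is torsion-free — so the difference is one of packaging: your version is more self-contained and explicit, whereas the paper's is shorter and uniform with the treatment of the orthogonal case (where the torsion prime $2$ does appear). Both are valid.
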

\begin{proof}
For brevity, we write $\cC_{k,r}$ instead of $\cC_{k,r}(V)$ in this proof.  There is an obvious $\Sp(V)$-equivariant isomorphism $\Gr_k(V) \overset{\sim}{\to} \Gr_{n -k}(V)$ given by $H \mapsto H^\perp$.  This restricts to an isomorphism between $\cC_{k,r}$ and $\cC_{n-k,n-2k+r}$. By replacing $k$ by $n - k$ if necessary, we may as well assume that $k \le n/2$.  We will prove parts~\eqref{it:orbit-sympne}--\eqref{it:orbit-eqcoh} under this assumption.

\eqref{it:orbit-sympne}~When $k \le n/2$, this statement says that $\cC_{k,r}$ is nonempty if and only if $r$ is even and $0 \le r \le k$.  If $\cC_{k,r}$ is nonempty, then for any $H \in \cC_{k,r}$, the quotient $H/(H \cap H^\perp)$ inherits a nondegenerate symplectic form, so its dimension $r$ must be even. It is obvious that $0 \le r \le k$. 

For the opposite implication, choose a basis $\{x_1, \ldots, x_n\}$ for $V$ such that
\[
\lag x_i, x_j \rag =
\begin{cases}
1 & \text{if $i < j$ and $i + j = n + 1$,}\\
-1 & \text{if $i > j$ and $i + j = n + 1$,} \\
0 & \text{otherwise.}
\end{cases}
\]
Given an even number $r$ such that $0 \le r \le k$, we see that
\[
\spn\{x_1, \ldots, x_{k-r}\} \cup \{ x_{\frac{\dim V - r}{2}+1}, x_{\frac{\dim V - r}{2}+2}, \ldots, x_{\frac{\dim V + r}{2}} \}
\]
lies in $\cC_{k,r}$, so $\cC_{k,r}$ is nonempty.

\eqref{it:orbit-single}~This holds by Lemma~\ref{lem:v4-orbit}.

\eqref{it:orbit-stab}~By Lemma~\ref{lem:v4-stab}, we have
\[
\Stab_{\Sp(V)}(H) \cong (\GL(M_1) \times \Sp(M_2) \times \Sp(M_3)) \ltimes (\text{a unipotent group}),
\]
and this is connected.


\eqref{it:orbit-eqcoh}~We have
\[
\coh^i_{\Sp(V)}(\cC_{k,r},\bk)
\cong \coh^i_{\Stab(H)}(\mathrm{pt},\bk)
\cong \coh^i_{\GL(M_1) \times \Sp(M_2) \times \Sp(M_3)}(\mathrm{pt},\bk),
\]
It is well known that the equivariant cohomology of a point vanishes in odd degrees as long as the torsion primes for the group are invertible in $\bk$: see, for instance,~\cite[Theorem~6.7.9]{a:book}.  But $\GL(M_1) \times \Sp(M_2) \times \Sp(M_3)$ has no torsion primes, by~\cite[\S4.4]{ss:cc}.
\end{proof}

\begin{rmk}\label{rmk:orbit-pi1}
Using the fact that $\Sp(V)$ is simply connected, one can deduce from Proposition~\ref{prop:orbit-symp}\eqref{it:orbit-stab} that each orbit $\cC_{k,r}(V)$ is also simply connected.
\end{rmk}


\subsection{The orthogonal case}
\label{ss:orbit-orth}

In this subsection, assume that
\[
\lag{-},{-}\rag: V \times V \to \C
\]
is symmetric.  In contrast with the symplectic case, not all $\SO(V)$-orbits on the Grassmannian are simply connected: some admit $2$-to-$1$ covering maps.  For any commutative ring $\bk$, let
\[
\sgn = \sgn_\bk
\]
denote the $\bk[\Z/2\Z]$-module whose underlying $\bk$-module is just $\bk$ itself, and on which the nontrivial element of $\Z/2\Z$ acts by $-1$.  If $2 \ne 0$ in $\bk$, then $\sgn_\bk$ and the trivial $\bk[\Z/2\Z]$-module are not isomorphic.  In addition, if $2$ is invertible in $\bk$, then the regular representation decomposes as
\begin{equation}\label{eqn:reg-decomp}
\bk[\Z/2\Z] \cong \bk \oplus \sgn.
\end{equation}

\begin{prop}\label{prop:orbit-orth}
Suppose $V$ is equipped with nondegenerate symmetric bilinear form.  For $0 \le k \le n$, let $\cC_{k,r}(V) \subset \Gr_k(V)$ be as in Lemma~\ref{lem:v4-orbit}.
\begin{enumerate}
\item The set $\cC_{k,r}(V)$ is nonempty if and only if
$\max\{0, 2k - n\} \le r \le k$.\label{it:orbit-orthne}
\item If $\cC_{k,r}$ is nonempty, it is a single $\rO(V)$-orbit.  Moreover, if $n -2k + r > 0$ or $r > 0$, then $\cC_{k,r}(V)$ is a single $\SO(V)$-orbit.  If $n - 2k + r = r = 0$, then $\cC_{k,r}(V)$ is a union of two $\SO(V)$-orbits.\label{it:orbit-12}
\item Let $H \in \cC_{k,r}(V)$.  We have\label{it:orbit-orthstab}
\[
\Stab_{\SO(V)}(H)/ \Stab_{\SO(V)}(H)^\circ \cong
\begin{cases}
\Z/2\Z & \text{if $\max \{0,2k-n\} < r$,} \\
1 & \text{if $\max \{0,2k-n\} = r$.}
\end{cases}
\]
\item Let $\cC$ be an $\SO(V)$-orbit on $\Gr_k(V)$, and let $\bk$ be a commutative ring in which $2$ is invertible.  Then $\coh^\bullet_{\SO(V)}(\cC,\bk)$ is even and free.\label{it:orbit-ortheqcoh}
In the case where points of $\cC$ have disconnected stabilizers, $\coh^\bullet_{\SO(V)}(\cC, \sgn)$ is also even and free.
\end{enumerate}
\end{prop}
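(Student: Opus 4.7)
The proof follows the pattern of Proposition~\ref{prop:orbit-symp}. Via the $\rO(V)$-equivariant isomorphism $H \mapsto H^\perp$ identifying $\cC_{k,r}(V)$ with $\cC_{n-k,n-2k+r}(V)$, it suffices to treat $k \le n/2$, where the condition in~\eqref{it:orbit-orthne} becomes $0 \le r \le k$. Necessity is immediate from $H \cap H^\perp \subset H^\perp$, which forces $k - r \le n - k$. For sufficiency, decompose $V = W \oplus W^\perp$ where $W$ is a hyperbolic subspace of dimension $2(k-r)$ and $W^\perp$ carries an orthonormal basis $\{f_1,\ldots,f_{n-2(k-r)}\}$; then take $H$ to be the direct sum of a Lagrangian of $W$ and $\spn\{f_1,\ldots,f_r\}$. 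Unlike the symplectic case, no parity constraint on $r$ arises, since $\C$-valued nondegenerate symmetric forms admit anisotropic subspaces of every dimension. For~\eqref{it:orbit-12}, Lemma~\ref{lem:v4-orbit} handles all cases except $r = 0$ and $n - 2k + r = 0$, i.e., $k = n/2$ and $\cC_{n/2,0}(V) = \Gr_{n/2}(V)^\iso$. By Lemma~\ref{lem:griso-conn}, this space has exactly two connected components, each $\SO(V)$-stable (by connectedness of $\SO(V)$) and swapped by any element of $\rO(V) \setminus \SO(V)$; combined with $\rO(V)$-transitivity, each component must be a single $\SO(V)$-orbit.

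For~\eqref{it:orbit-orthstab}, Lemma~\ref{lem:v4-stab} gives $\Stab_{\rO(V)}(H) \cong (\GL(M_1) \times \rO(M_2) \times \rO(M_3)) \ltimes A$ with $A$ connected unipotent. Using~\eqref{eqn:v4-adjoint}, a block-diagonal element with blocks $a, e, g, (a^\tra)^{-1}$ has determinant $\det(e)\det(g)$, so intersecting with $\SO(V)$ produces
\[
\Stab_{\SO(V)}(H) \cong \bigl(\GL(M_1) \times \{(e,g) \in \rO(M_2) \times \rO(M_3) : \det(e)\det(g) = 1\}\bigr) \ltimes A.
\]
Because $\GL(M_1)$ and $A$ are connected, the component group is that of the middle factor, which is $\Z/2\Z$ exactly when both $\rO(M_2)$ and $\rO(M_3)$ are disconnected, i.e., when $M_2, M_3 \ne 0$. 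Translating to $r$: this happens iff $r > 0$ and $n - 2k + r > 0$, equivalently $r > \max\{0, 2k-n\}$. In the complementary range one of $M_2, M_3$ vanishes, reducing the middle factor to an $\SO$-factor, hence connected.

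For~\eqref{it:orbit-ortheqcoh}, we use $\coh^\bullet_{\SO(V)}(\cC, \bk) \cong \coh^\bullet_{\Stab(H)}(\mathrm{pt}, \bk)$. The identity component $\Stab(H)^\circ$ is homotopy equivalent to its reductive Levi $\GL(M_1) \times \SO(M_2) \times \SO(M_3)$, whose only torsion prime is $2$; since $2$ is invertible in $\bk$, \cite[Theorem~6.7.9]{a:book} yields that $\coh^\bullet_{\Stab(H)^\circ}(\mathrm{pt}, \bk)$ is even and free, which settles the connected case. When $\pi_0(\Stab(H)) = \Z/2\Z$, consider the two-fold $\SO(V)$-equivariant cover $\SO(V)/\Stab(H)^\circ \to \cC$: its pushforward decomposes as $\ubk \oplus \cL_{\sgn}$ in characteristic $\ne 2$, giving
\[
\coh^\bullet_{\Stab(H)^\circ}(\mathrm{pt}, \bk) \cong \coh^\bullet_{\Stab(H)}(\mathrm{pt}, \bk) \oplus \coh^\bullet_{\Stab(H)}(\mathrm{pt}, \sgn),
\]
so both summands are concentrated in even degrees. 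The main obstacle is showing each summand is \emph{free} over $\bk$: one has to pin down the $\Z/2\Z$-action on the polynomial generators of $H^\bullet(B\Stab(H)^\circ, \bk)$ -- Chern classes of $M_1$, Pontryagin classes of $M_2$ and $M_3$, and (where relevant) their Euler classes -- and argue that the nontrivial element permutes an explicit monomial basis up to signs, so that the invariant and the sign-isotypic parts are each free on identifiable subsets of monomials.
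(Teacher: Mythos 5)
Your proof follows essentially the same strategy as the paper's: reduce to $k \le n/2$ via $H \mapsto H^\perp$; produce explicit representatives for nonemptiness; invoke Lemma~\ref{lem:v4-orbit} and Lemma~\ref{lem:v4-stab} for the orbit/stabilizer structure; and handle the equivariant cohomology via the index-two cover $\SO(V)/\Stab(H)^\circ \to \cC$. A few comments on where you diverge.

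For part~\eqref{it:orbit-12} in the special case $r = n - 2k + r = 0$, you argue from Lemma~\ref{lem:griso-conn} that $\Gr_{n/2}(V)^\iso$ has two connected components, each $\SO(V)$-stable. This is valid so long as Lemma~\ref{lem:griso-conn} is taken as a classical fact (as the paper's footnote allows); but beware that the paper also remarks the lemma can alternatively be \emph{deduced} from Propositions~\ref{prop:orbit-symp} and~\ref{prop:orbit-orth}, so if you later wanted to establish that lemma via this proposition you would have a circle. The paper avoids this by deriving the two-component statement internally: having already computed that $\Stab_{\rO(V)}(H)$ is connected in this case, $\cC_{k,0}(V) = \rO(V)/\Stab_{\rO(V)}(H)$ has $\pi_0(\rO(V)) = \Z/2\Z$ components, and each must be a single $\SO(V)$-orbit. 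Either route is fine, but the paper's is self-contained.

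For part~\eqref{it:orbit-ortheqcoh}, your approach is the same as the paper's: decompose $p_*\bk \cong \ubk \oplus \cL_\sgn$, so that both $\coh^\bullet_{\SO(V)}(\cC,\bk)$ and $\coh^\bullet_{\SO(V)}(\cC,\sgn)$ are graded direct summands of $\coh^\bullet_{\SO(V)}(E,\bk) \cong \coh^\bullet_{\Stab(H)^\circ}(\pt,\bk)$, which is even and free. Your final paragraph flags that freeness of the summands requires an argument, and proposes to identify the $\Z/2\Z$-action on the polynomial generators (Chern classes of $M_1$, Pontryagin classes of $M_2$, $M_3$, and Euler classes when present, which get negated). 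You do not actually carry this out, so as written the final step is incomplete. Note, however, that the paper silently uses the implication (direct summand of even-and-free is even-and-free), which is automatic once $\bk$ is a field or a PID; and the paper's later use of this proposition is under Assumption~\eqref{eqn:jmw}, where $\bk$ is a field or complete local PID, so the issue is moot there. If you want the statement for an arbitrary commutative ring with $2$ invertible, your proposed monomial-basis computation (or a universal-coefficients reduction to $\bk = \Z[\tfrac12]$) is a reasonable way to close the gap; just be aware the paper does not actually perform it.
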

When $n$ is even, we (arbitrarily) label the two orbits in $\cC_{n/2,0}(V)$ by 
\[
\cC_{n/2,0'}(V) \qquad\text{and}\qquad \cC_{n/2,0''}.
\]
\begin{proof}
Let us write $\cC_{k,r}$ for $\cC_{k,r}(V)$.  There is an obvious $\SO(V)$-equivariant isomorphism $\Gr_k(V) \overset{\sim}{\to} \Gr_{n -k}(V)$ given by $H \mapsto H^\perp$.  This restricts to an isomorphism between $\cC_{k,r}$ and $\cC_{n-k,n-2k+r}$. By replacing $k$ by $n - k$ if necessary, we may as well assume that $k \le n/2$.  We will prove parts~\eqref{it:orbit-orthne}--\eqref{it:orbit-ortheqcoh} under this assumption.

\eqref{it:orbit-orthne}~When $k \le n/2$, this statement says that $\cC_{k,r}$ is nonempty if and only if $0 \le r \le k$.  The ``only if'' direction is obvious.  For the ``if'' direction, choose an orthonormal basis $\{x_1, \ldots, x_n\}$ for $V$, i.e., a basis satisfying
\[
\lag x_i, x_j\rag = 
\begin{cases}
1 & \text{if $i = j$,} \\
0 & \text{otherwise.}
\end{cases}
\]
For $0 \le r \le k$, the subspace
\begin{multline*}
\spn \{x_1 + \sqrt{-1} x_2, x_3 + \sqrt{-1} x_4, \ldots, x_{2k-2r-1} + \sqrt{-1} x_{2k-2r}\} \\ {}\cup \{ x_{2k-2r+1}, x_{2k-2r+2}, \ldots, x_{2k-r} \}
\end{multline*}
lies in $\cC_{k,r}$, so $\cC_{k,r}$ is nonempty.

\eqref{it:orbit-12}~All but the case where $n-2k+r = r = 0$ is covered by Lemma~\ref{lem:v4-orbit}.  We will return to this case after proving part~\eqref{it:orbit-orthstab}.

\eqref{it:orbit-orthstab}~By Lemma~\ref{lem:v4-stab}, we have
\[
\Stab_{\rO(V)}(H) \cong (\GL(M_1) \times \rO(M_2) \times \rO(M_3)) \ltimes (\text{a unipotent group}).
\]
Write an element of this group as a quadruple $(g_1,g_2,g_3,v)$.  The set of connected components of $\Stab_{\rO(V)}(H)$ is in bijection with the set of possible values of the pair $(\det(g_2),\det(g_3))$, and thus depends on whether $M_2$ and $M_3$ are zero or not.  A connected component lies in $\SO(V)$ if $\det(g_2)\det(g_3) = 1$.  Recall that $\dim M_2 = r$ and $\dim M_3 = n-2k+r$.  The following table summarizes the possible cases.
\[
\begin{array}{c@{\quad}c@{\quad}c}
& \text{Number of components in}
& \text{Number of components in} \\
\text{Condition} & \Stab_{\rO(V)}(H) & \Stab_{\SO(V)}(H) \\
\hline
\max\{0,2k-n\} < r & 4 & 2 \\
2k-n < 0 = r & 2 & 1 \\
2k-n = r = 0 & 1 & 1
\end{array}
\]
(The case $0 < 2k-n = r$ cannot occur under the assumption that $k \le n/2$.)  This table shows that the group $\Stab_{\SO(V)}(H)/\Stab_{\SO(V)}(H)^\circ$ is as claimed in the proposition.


\eqref{it:orbit-12} for $r  = 0$ and $2k-n= 0$.  We must show that $\cC_{k,r}$ consists of two $\SO(V)$-orbits in this case.  We have already seen that it is a single $\rO(V)$-orbit, so it consists of at most two $\SO(V)$-orbits.  On the other hand, we saw above that $\Stab_{\rO(V)}(H)$ is connected for $H \in \cC_{k,r}$, so $\cC_{k,r}$ has two connected components, and therefore must consist of two $\SO(V)$-orbits.

%
%
%

\eqref{it:orbit-ortheqcoh}~Let $\cC$ be an $\SO(V)$-orbit, and let $H \in \cC$. Suppose first that $\Stab_{\SO(V)}(H)$ is connected. Then
\[
\coh^\bullet_{\SO(V)}(\cC,\bk)
\cong \coh^\bullet_{(\GL(M_1) \times \rO(M_2) \times \rO(M_3))^\circ}(\mathrm{pt},\bk).
\]
By~\cite[\S4.4]{ss:cc}, the only torsion prime for the reductive group $(\GL(M_1) \times \rO(M_2) \times \rO(M_3))^\circ$ is $2$.  By assumption, $2$ is invertible in $\bk$, so the cohomology groups $\coh^\bullet_{(\GL(M_1) \times \rO(M_2) \times \rO(M_3))^\circ}(\mathrm{pt},\bk)$ are even and free.

Suppose now that $\Stab_{\SO(V)}(H)$ is disconnected, and let
\[
\eC = \SO(V)/\Stab_{\SO(V)}(H)^\circ.
\]
Then we have a $2$-to-$1$ covering map $p: \eC \to \cC$.  Since $2$ is invertible in $\bk$, we have $p_*\bk \cong \bk \oplus \sgn$, so it is enough to show that $\coh^\bullet_{\SO(V)}(E,\bk)$ is even and free.  But
\[
\coh^\bullet_{\SO(V)}(\eC, \bk) 
\cong \coh^\bullet_{(\GL(M_1) \times \rO(M_2) \times \rO(M_3))^\circ}(\mathrm{pt},\bk),
\]
and this is even and free by the same reasoning as in the previous paragraph.
\end{proof}

\begin{rmk}\label{rmk:orbit-orthp1}
In contrast with the setting of Remark~\ref{rmk:orbit-pi1}, the fundamental group of $\cC_{k,r}(V)$ can be more complicated.  One can show that
\[
\pi_1(\cC_{k,r}(V),H) \cong
\begin{cases}
\Z/2\Z & \text{if $n \ge 3$ and $\max \{0,2k-n\} < r$,} \\
\Z & \text{if $n = 2$ and $\max \{0,2k-n\} < r$,} \\
1 & \text{if $\max \{0,2k-n\} = r$.}
\end{cases}
\]
We will not need these calculations in this paper, however.
\end{rmk}

\section{Some results on orbit closures}
\label{sec:resoln}

This section contains two results related to the geometry of orbit closures in the setting of Section~\ref{ss:orbit-symp} or~\ref{ss:orbit-orth}.  The first exhibits a resolution of singularities.  The second deals with non-simply-connected orbits in the orthogonal case.

\begin{prop}\label{prop:m1-resoln}
Let $V$ be an $n$-dimensional complex vector space equipped with a nondegenerate symmetric or skew-symmetric bilinear form.  Let $0 \le k \le n$, and let $r$ be such that $\max\{0, 2k-n\} \le r \le k$.  In the skew-symmetric case, assume also that $r$ is even.  Let
\[
\tC_{k,r}(V) = \{ P \subset H \subset P^\perp \subset V \mid \text{$\dim P = k -r$ and $\dim H = k$} \}.
\]
Let $\varpi: \tC_{k,r}(V) \to \Gr_k(V)$ be the map given by
\[
\varpi(P,H) = H.
\]
Then the image of $\varpi$ is $\overline{\cC_{k,r}(V)}$, and the map $\varpi: \tC_{k,r}(V) \to \overline{\cC_{k,r}(V)}$ is a resolution of singularities.
\end{prop}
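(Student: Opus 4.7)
The plan is to verify three things in sequence: $\tC_{k,r}(V)$ is smooth and projective; $\varpi$ restricts to an isomorphism over an open dense subset $U \subset \tC_{k,r}(V)$ that maps onto $\cC_{k,r}(V)$; and $\varpi$ has image equal to $\overline{\cC_{k,r}(V)}$.

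For smoothness, I view $\tC_{k,r}(V)$ as a Grassmannian bundle. The first projection $(P,H) \mapsto P$ lands in $\Gr_{k-r}(V)^\iso$ (since $P \subset H \subset P^\perp$ forces $P$ to be isotropic), with fiber over $P$ canonically identified with $\Gr_r(P^\perp/P)$, a Grassmannian of $r$-planes in an $(n - 2k + 2r)$-dimensional space. Globally, $\tC_{k,r}(V)$ is the relative Grassmannian $\Gr_r(\mathcal{P}^\perp/\mathcal{P})$, where $\mathcal{P}$ is the tautological subbundle on $\Gr_{k-r}(V)^\iso$. Our hypothesis $r \ge \max\{0, 2k-n\}$ gives $k - r \le n/2$, so Lemma~\ref{lem:griso-conn} makes the base smooth (and, in the one symmetric case $k - r = n/2$, a disjoint union of two smooth varieties). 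The Grassmannian bundle of a vector bundle over a smooth base is smooth, so $\tC_{k,r}(V)$ is smooth. It is also closed in $\Gr_{k-r}(V) \times \Gr_k(V)$, hence projective.

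For birationality, let $U = \{(P,H) \in \tC_{k,r}(V) : P = H \cap H^\perp\}$. Since $P \subset H \cap H^\perp$ holds automatically and $\dim P = k - r$, a point $(P,H)$ lies in $U$ precisely when $\dim(H \cap H^\perp) = k - r$, or equivalently, when the form on $P^\perp/P$ restricts nondegenerately to the $r$-plane $H/P$. This is a Zariski-open condition on each fiber $\Gr_r(P^\perp/P)$ (nonvanishing Gram determinant), and it is nonempty in the allowed range, provided $r$ is even in the skew-symmetric case. Hence $U$ is open and dense in $\tC_{k,r}(V)$, and $\varpi|_U$ is an isomorphism onto $\cC_{k,r}(V)$, with inverse $H \mapsto (H \cap H^\perp, H)$.

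Since $\varpi$ is proper, its image is closed, so $\varpi(\tC_{k,r}(V)) = \overline{\varpi(U)} = \overline{\cC_{k,r}(V)}$. Combined with smoothness of the source and the isomorphism over $\cC_{k,r}(V)$, this makes $\varpi$ a resolution of singularities. (When $k = n/2$ and $r = 0$ in the symmetric case, both source and target split into two components, matched up by $\varpi$, and the argument applies component by component.) The main, rather mild, obstacle is the density of $U$: one has to know that a generic $r$-plane in a nondegenerate symmetric or symplectic space of dimension $n - 2k + 2r$ inherits a nondegenerate form, and this is exactly where the parity hypothesis on $r$ in the skew-symmetric case intervenes.
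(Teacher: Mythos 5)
Your proof is correct and follows essentially the same route as the paper's: you view $\tC_{k,r}(V)$ as a Grassmannian bundle over $\Gr_{k-r}(V)^\iso$ for smoothness, observe properness from the projective embedding, and check birationality via the open set $U = \varpi^{-1}(\cC_{k,r}(V))$ where $P = H \cap H^\perp$. You spell out the density of $U$ and the two-component case more explicitly than the paper does, which is a small but welcome clarification.
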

\begin{proof}
	If $H$ is in the image of $\varpi$, then it satisfies the condition $P\subset H\subset P^\perp$ with $\dim P= k-r$. This implies $P\subset H^\perp\cap H$. Hence $k-r\leq \dim H\cap H^\perp $ and $H\in \overline{\cC_{k,r}(V)}$.
	The map $\varpi|_{\varpi^{-1}(\cC_{k,r}(V))}: \varpi^{-1}(\cC_{k,r}(V))\to \cC_{k,r}(V)$ is clearly an isomorphism. Now consider the map, $\tC_{k,r}\to \Gr_{k-r}(V)^\iso$ by sending $(P,H)$ to $P$. The fibers of this map are isomorphic to $\{H \mid P\subset H \subset P^\perp\}\cong \Gr_r(P^\perp/P)$, which is smooth. Also $k-r< k-r/2 \leq n/2$, hence by Lemma \ref{lem:griso-conn}, $\Gr_{k-r}(V)^\iso$ is smooth. Therefore $\tC_{k,r}(V)$ is smooth. We know the projection map $\Gr_{k-r}(V)^\iso \times \Gr_r(V)\to \Gr_k(V)$ is proper and $\tC_{k,r}(V)$ is a closed subset of $\Gr_{k-r}(V)^\iso \times \Gr_r(V) $. Hence  the map, $\varpi$ is proper. Combining these facts, we deduce that $\varpi$ is a resolution of singularities.
\end{proof}

For the remainder of this section, we work in the symmetric case, i.e., in the setting of Section~\ref{ss:orbit-orth}.  Suppose $\max \{0, 2k-n\} < r$, so that $\cC_{k,r}(V)$ is not simply connected.  We have already seen in the proof of Proposition~\ref{prop:orbit-orth}\eqref{it:orbit-ortheqcoh} how to construct a certain $2$-to-$1$ covering map $p: \eC \to \cC_{k,r}(V)$.

It would have been useful later in the paper to have a ``compactification'' of $p$, i.e., a proper map $p': \eC' \to \overline{\cC_{k,r}(V)}$ with smooth domain $\eC'$ such that the restriction $p'|_{(p')^{-1}(\cC_{k,r}(V))}: (p')^{-1}(\cC_{k,r}(V)) \to \cC_{k,r}(V)$ is identified with the covering map $p$.  Unfortunately, we do not know how to construct such a compactification of the $2$-to-$1$ covering $p$ explicitly.

As a substitute, we will construct a compactification of a ``$\pi_1$-injective fibration of index $2$,'' by which we mean a locally trivial fibration $F \to \cC_{k,r}$ in which $F$ is connected, and the induced map
\[
\pi_1(F) \to \pi_1(\cC_{k,r})
\]
is injective.  More precisely, we will construct a proper map
\[
\widehat{\varpi}: \hC_{k,r}(V) \to \overline{\cC_{k,r}(V)}
\]
whose domain is smooth, and whose restriction to the preimage of $\cC_{k,r}(V)$ is a $\pi_1$-injective fibration.

We begin by defining $\hC_{k,r}(V)$.  The definition is different depending on whether $r$ is even or odd.  If $r$ is even, we set  
\[
\hC_{k,r}(V) = \left\{ 
\begin{array}{@{}l@{}}
P \subset Q \subset H \\ {}\quad\subset P^\perp \subset V 
\end{array}\,\Big|\,
\begin{array}{@{}c@{}}
\text{$\dim P = k - r$, $\dim Q = k - \frac{r}{2}$,}\\
\text{$\dim H = k$, and $Q \subset Q^\perp$}
\end{array} \right\}
\qquad\text{($r$ even).}
\]
Before defining it in the odd case, we need some additional notation.  Let
\[
V^+ = V \oplus \C,
\]
and equip it with the nondegenerate symmetric bilinear form $\lag{-},{-}\rag_+$ given by
\[
\lag (v,z), (v',z')\rag_+ = \lag v,v'\rag + zz'
\qquad\text{for $v, v' \in V$ and $z, z' \in \C$.}
\]
There is an obvious injective map
\[
\cC_{k,r}(V)  \to \cC_{r+1,k+1}(V^+)
\qquad\text{given by}\qquad H \mapsto H^+ = H \oplus \C.
\]
We now set
\[
\hC_{k,r}(V) = \left\{ 
\begin{array}{@{}c@{}}
P \subset H \subset P^\perp \subset V,\\ P \subset Q \subset H^+ \subset V^+
\end{array}\,\Big|\,
\begin{array}{@{}c@{}}
\text{$\dim P = k - r$, $\dim Q = k - \frac{r-1}{2}$,}\\
\text{$\dim H = k$, and $Q \subset Q^\perp_+$}
\end{array} \right\}
\qquad\text{($r$ odd).}
\]
Here $Q^\perp_+$ is the orthogonal complement of $Q$ with respect to $\lag{-},{-}\rag_+$.

Before studying $\hC_{k,r}$ in detail, we define another variety $\cB_{k,r}(V)$ as follows:
\begin{align*}
\cB_{k,r}(V) &= \left\{ P \subset Q \subset V \,\Big|\,
\begin{array}{@{}c@{}}
\text{$\dim P = k - r$, $\dim Q = k - \frac{r}{2}$,}\\
\text{and $Q \subset Q^\perp$}
\end{array} \right\}&&
\text{if $r$ even,}
\\
\cB_{k,r}(V) &= \left\{ \begin{array}{@{}c@{}}Q \subset V^+, \\ P \subset Q \cap V \subset V \end{array} \,\Big|\,
\begin{array}{@{}c@{}}
\text{$\dim P = k - r$, $\dim Q = k - \frac{r-1}{2}$,}\\
\text{and $Q \subset Q^\perp_+$}
\end{array} \right\}&&
\text{if $r$ odd.}
\end{align*}

There are obvious maps
\[
\hC_{k,r}(V) \to \cB_{k,r}(V),
\qquad
\hC_{k,r}(V) \to \cC_{k,r}(V)
\]
given by forgetting $H$ or $Q$, respectively.  Let
\[
\widehat{\varpi}: \hC_{k,r}(V) \to \overline{\cC_{k,r}(V)}
\]
denote the composition $\hC_{k,r}(V) \to \tC_{k,r}(V) \xrightarrow{\varpi} \overline{\cC_{k,r}(V)}$.  Next, let
\[
\hC_{k,r}^\circ(V) = (\widehat{\varpi})^{-1}(\cC_{k,r}(V))
\qquad\text{and}\qquad
\widehat{\varpi}^\circ = \widehat{\varpi}|_{\hC_{k,r}^\circ(V)}: \hC_{k,r}^\circ(V) \to \cC_{k,r}(V).
\]

\begin{prop}\label{prop:fund-ses}
Let $V$ be an $n$-dimensional vector space equipped with a nondegenerate symmetric bilinear form, and let $0 \le k \le n$.  Let $r$ be an integer such that $\max \{0,2k-n\} < r \le k$.
\begin{enumerate}
\item The variety $\cB_{k,r}(V)$ is smooth, connected, and simply connected.\label{it:barbkr}
\item For any commutative ring $\bk$, $\coh^\bullet(\cB_{k,r}(V),\bk) = 0$ is even and free.\label{it:bkr-coh}
\item The variety $\hC_{k,r}$ is smooth, connected, and simply connected.\label{it:hckr}
\item The map $\widehat{\varpi}^\circ: \hC_{k,r}^\circ(V) \to \cC_{k,r}$ is a locally trivial fibration.\label{it:hfib}
\item Let $\bk$ be a commutative ring in which $2$ is invertible.  There is an isomorphism of local systems \label{it:push-forward}
\[
\cH^0(\widehat{\varpi}_*^\circ\bk) \cong \bk \oplus \sgn.
\]
\end{enumerate}
\end{prop}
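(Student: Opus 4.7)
The plan is to address the five parts in order, using fibration arguments for the topological statements, an explicit fiber description for~(4), and a monodromy computation for~(5).

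First, for~(1)--(3), I would realize both $\cB_{k,r}(V)$ and $\hC_{k,r}(V)$ as iterated Zariski-locally trivial fibrations whose bases and fibers are ordinary Grassmannians or isotropic Grassmannians $\Gr_j(W)^\iso$ with $2j<\dim W$. For $r$ even, the forgetful map $\cB_{k,r}(V)\to\Gr_{k-r/2}(V)^\iso$ sending $(P,Q)\mapsto Q$ has ordinary Grassmannian fibers $\Gr_{k-r}(Q)$; for $r$ odd, the forgetful map $\cB_{k,r}(V)\to\Gr_{k-r}(V)^\iso$ sending $(P,Q)\mapsto P$ has fibers $\Gr_{(r+1)/2}(P^\perp_+/P)^\iso$, which are smooth, connected and simply connected because $\dim(P^\perp_+/P)$ is odd, so the two-component case of Lemma~\ref{lem:griso-conn}(2) does not arise. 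In both parities, the strict inequality $r>\max\{0,2k-n\}$ similarly rules out the two-component case for the base. The map $\hC_{k,r}(V)\to\cB_{k,r}(V)$ forgetting $H$ is then a locally trivial fibration with ordinary Grassmannian fibers (in the odd-$r$ case, a brief case analysis on whether $Q\subset V$ shows that both cases yield Grassmannian fibers of the same dimension). Chaining these fibrations proves~(1) and~(3). For~(2), lifting the affine paving of the isotropic base supplied by Lemma~\ref{lem:gr-affpave} and crossing with the Schubert paving of the ordinary Grassmannian fiber yields an affine paving of $\cB_{k,r}(V)$, whence Corollary~\ref{cor:gr-oddvan} gives even and free cohomology.

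Next, for~(4), I would fix $H\in\cC_{k,r}(V)$ and identify the fiber $(\widehat{\varpi}^\circ)^{-1}(H)$ explicitly. The conditions $P\subset H$ and $H\subset P^\perp$ force $P\subset H\cap H^\perp$, and since $\dim(H\cap H^\perp)=k-r=\dim P$ holds on $\cC_{k,r}(V)$ by definition, the subspace $P=H\cap H^\perp$ is rigidly determined. Passing to the quotient $H/P$ (for $r$ even) or $H^+/P$ (for $r$ odd), which in each case inherits a nondegenerate symmetric form of dimension $r$ or $r+1$ respectively, the remaining datum $Q$ becomes an isotropic subspace of the middle dimension, so the fiber is identified with $\Gr_{r/2}(H/P)^\iso$ or $\Gr_{(r+1)/2}(H^+/P)^\iso$. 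By Lemma~\ref{lem:griso-conn}(2), this middle-dimensional isotropic Grassmannian has exactly two connected components. Local triviality of $\widehat{\varpi}^\circ$ is immediate from the global construction: on $\cC_{k,r}(V)$ the tautological bundle $\cH$ admits $\cH\cap\cH^\perp$ as a rank-$(k-r)$ subbundle, and $\hC_{k,r}^\circ(V)$ is the relative middle-dimensional isotropic Grassmannian of the associated rank-$r$ quotient bundle (or its direct sum with a trivial rank-$1$ symmetric bundle in the odd case).

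Finally, for~(5), the rank-$2$ local system $\cH^0(\widehat{\varpi}^\circ_*\bk)$ is determined by its monodromy representation $\pi_1(\cC_{k,r}(V))\to S_2$ acting on the two components of the fiber. Because $\widehat{\varpi}^\circ$ is $\SO(V)$-equivariant and $\SO(V)$ is connected, this representation factors through the boundary map $\pi_1(\cC_{k,r}(V))\twoheadrightarrow\pi_0(\Stab_{\SO(V)}(H))$ of the fibration $\Stab_{\SO(V)}(H)\hookrightarrow\SO(V)\twoheadrightarrow\cC_{k,r}(V)$, followed by the natural action of the component group on the set of connected components of the fiber. By Proposition~\ref{prop:orbit-orth}(iii) the component group is $\Z/2\Z$, and its nontrivial class is represented (in the notation of Lemma~\ref{lem:v4-stab}) by an element $(g_2,g_3)\in\rO(M_2)\times\rO(M_3)$ with $\det g_2=\det g_3=-1$; this element acts on $H/P\cong M_2$ (resp.\ on $(H/P)\oplus\C$ for $r$ odd, trivially on the appended line) via $g_2$ of determinant $-1$, which is precisely the sort of orthogonal transformation that exchanges the two families of maximal isotropic subspaces. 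Hence the monodromy is the nontrivial transposition, and $\cH^0(\widehat{\varpi}^\circ_*\bk)$ is the pushforward of the constant sheaf along a connected double cover of $\cC_{k,r}(V)$. Since $2$ is invertible in $\bk$, diagonalizing the swap into its $\pm1$ eigenspaces yields $\cH^0(\widehat{\varpi}^\circ_*\bk)\cong\bk\oplus\sgn$. The main obstacle I expect is the bookkeeping in the odd-$r$ case, which involves the auxiliary space $V^+$ throughout and requires a case split for the fibers of $\hC_{k,r}(V)\to\cB_{k,r}(V)$; a secondary subtlety is verifying that the monodromy of $\cH^0(\widehat{\varpi}^\circ_*\bk)$ really does factor through $\pi_0(\Stab_{\SO(V)}(H))$ and that the nontrivial class is hit rather than absorbed by $\pi_1(\SO(V))$ in the long exact sequence.
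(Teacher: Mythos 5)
The overall architecture of your proof matches the paper's: iterated locally trivial fibrations with isotropic-Grassmannian building blocks for parts~(1)--(4), and then a monodromy analysis for~(5). There are some genuine differences worth flagging.

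For part~(1) with $r$ even, you use the forgetful map $(P,Q)\mapsto Q$ to $\Gr_{k-r/2}(V)^\iso$ with ordinary Grassmannian fibers $\Gr_{k-r}(Q)$, whereas the paper uniformly uses $(P,Q)\mapsto P$ to $\Gr_{k-r}(V)^\iso$ with isotropic Grassmannian fibers $\Gr_{r/2}(P^\perp/P)^\iso$ (resp.\ $\Gr_{(r+1)/2}((P^\perp\oplus\C)/P)^\iso$). Both work, and your version has the mild advantage that the fibers are ordinary Grassmannians in the even case.

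For part~(2), the paper avoids affine pavings entirely: it just applies the Serre spectral sequence to the fibration from part~(1), using that $\coh^\bullet$ of both base and fiber is even and free (Corollary~\ref{cor:gr-oddvan}) and that the base is simply connected. Your affine-paving route is plausible for $r$ even, but it quietly requires a trivialization of the bundle over each affine piece of the base, which amounts to Quillen--Suslin applied to the tautological bundle; and you do not address $r$ odd at all, where under either fibration the fiber is an isotropic Grassmannian rather than an ordinary one, so ``crossing with the Schubert paving'' would need the quadratic form on the vertical bundle to trivialize over affine pieces (a different and less elementary claim). The Serre spectral sequence argument sidesteps all of this and is substantially shorter.

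For part~(5), your route is genuinely different from the paper's. You compute the monodromy of $\cH^0(\widehat\varpi^\circ_*\bk)$ explicitly: using $\SO(V)$-equivariance to reduce to $\pi_0(\Stab_{\SO(V)}(H))\cong\Z/2\Z$, and then exhibiting, via the explicit stabilizer description of Lemma~\ref{lem:v4-stab}, an element of determinant $(-1,-1)$ that swaps the two families of maximal isotropic subspaces in $H/\rad H$. This is correct, and you are right that the subtle point is showing the nontrivial class is actually realized. The paper instead avoids any explicit monodromy computation: it uses part~(3) to observe that $\hC^\circ_{k,r}(V)$ is Zariski open in the smooth connected variety $\hC_{k,r}(V)$, hence connected; then the long exact homotopy sequence of the fibration (with $\pi_1(F)=1$ and $\pi_0(F)=\Z/2\Z$) forces $\pi_1(\hC^\circ_{k,r})\hookrightarrow\pi_1(\cC_{k,r})$ with image of index~$2$; finally $\cH^0\circ\widehat\varpi^\circ_*$ is identified as the right adjoint (coinduction) to restriction along this inclusion, giving the regular representation of $\bk[\Z/2\Z]$ automatically. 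The paper's argument buys you the result without ever naming the nontrivial stabilizer element, at the cost of depending on part~(3); yours is more concrete and logically independent of part~(3) once you carry out the determinant calculation carefully.
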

\begin{proof}
\eqref{it:barbkr}  There is a map
\begin{equation}\label{eqn:barbkr}
p: \cB_{k,r}(V) \to \Gr_{k-r}(V)^\iso
\end{equation}
given by $(P,Q) \mapsto P$.  Since $2k -n < r$, we have $k-r < k - r/2 < n/2$, and so $\Gr_{k-r}(V)^\iso$ is smooth, connected, and simply connected by Lemma~\ref{lem:griso-conn}.  

Suppose for now that $r$ is even. Let $\mc{F}l(k-r,k-r/2)$ be the variety of partial flags $P \subset Q \subset V$ with $\dim P = k-r$ and $\dim Q = k-r/2$.  The projection $\mc{F}l(k-r,k-r/2) \to \Gr_{k-r}(V)$ is a locally trivial fibration, and the map $p$ in~\eqref{eqn:barbkr} is the restriction of this map to the preimage of $\Gr_{k-r}(V)^\iso \subset \Gr_{k-r}(V)$. Hence $p$ is a locally trivial fibration whose fiber over a point $P \in \Gr_{k-r}(V)^\iso$ is the variety
\[
\{ Q \mid \text{$\dim Q = k - r/2$ and $P \subset Q \subset Q^\perp \subset P^\perp$} \} \cong \Gr_{r/2}(P^\perp / P)^\iso.
\]
Since $2k -n < r$, we have $\dim P^\perp /P = n - 2(k - r) = n - 2k + 2r > r$, so $\Gr_{r/2}(P^\perp/P)^\iso$ is again smooth, connected, and simply connected.  We conclude that $\cB_{k,r}(V)$ is smooth, connected, and simply connected.

The case where $r$ is odd is similar, but the fiber of $p$ over $P \in \Gr_{k-r}(V)^\iso$ is given by
\[
\{ Q \mid \text{$\textstyle\dim Q = k - \frac{r-1}{2}$ and $P \subset Q \subset Q^\perp_+ \subset P^\perp \oplus \C$} \} \cong \Gr_{\frac{r+1}{2}}(P^\perp / P \oplus \C)^\iso.
\]
Since $\dim (P^\perp/P \oplus \C )> r+1$, the variety $\Gr_{\frac{r+1}{2}}(P^\perp / P \oplus \C)^\iso$ is smooth, connected, and simply connected, and we conclude as before.

\eqref{it:bkr-coh}
We have seen in part~\eqref{it:barbkr} that $\cB_{k,r}$ is an isotropic Grassmannian bundle over an isotropic Grassmannian.  The claim that its cohomology is even and free then follows from the Serre spectral sequence and Corollary~\ref{cor:gr-oddvan}.

\eqref{it:hckr}
Suppose first $r$ is even. From the definitions, the map $\hC_{k,r}(V) \to \cB_{k,r}(V)$ is a locally trivial fibration whose fiber over a point $(P,Q) \in \cB_{k,r}(V)$ is identified with
\[
\Gr_{r/2}(P^\perp/Q),
\]
which is smooth, connected, and simply connected.  In view of part~\eqref{it:barbkr}, the claim for $\hC_{k,r}(V)$ follows.

Now suppose that $r$ is odd.  For a point $(P,Q) \in \cB_{k,r}(V)$, let $\bar Q$ be the image of $Q$ under the projection map $V^+ \to V$.  The kernel of this projection map contains no nonzero isotropic vectors.  Since $Q$ is isotropic, it follows that $\dim \bar Q = \dim Q = k - \frac{r-1}{2}$.  Note also that $Q$ is contained in $P^\perp_+ = P^\perp \oplus \C$, and hence that $\bar Q \subset P^\perp$. Finally, in the definition of $\hC_{k,r}(V)$, the condition $Q \subset H^+ \subset P^\perp_+ \subset V^+$ is equivalent to $\bar Q \subset H \subset P^\perp \subset V$.  In view of these observations, we deduce that $\hC_{k,r}(V) \to \cB_{k,r}(V)$ is a locally trivial fibration whose fiber over a point $(P,Q) \in \cB_{k,r}(V)$ is identified with
\[
\Gr_{\frac{r-1}{2}}(P^\perp/\bar Q),
\]
and then we conclude as before.

\eqref{it:hfib}
Let $H \in \cC_{k,r}(V)$, and let $(P,Q,H) \in \widehat{\varpi}^{-1}(H)$.  The definition of $\hC_{k,r}(V)$ implies that $P \subset \rad H$, but the fact that $H \in \cC_{k,r}$ means that $\dim \rad H = k - r = \dim P$.  These observations let us describe $\hC_{k,r}^\circ(V)$ as follows: if $r$ is even, then
\begin{align*}
\hC_{k,r}^\circ(V) &= \left\{ 
\rad H \subset Q \subset H \,\Big|\,
\begin{array}{@{}c@{}}
\text{$H \in \cC_{k,r}(V)$, $\dim Q = k - \frac{r}{2}$,}\\
\text{and $Q \subset Q^\perp$}
\end{array} \right\}
&&\text{if $r$ is even,} \\
\hC_{k,r}^\circ(V) &= \left\{ 
\rad H \subset Q \subset H^+ \,\Big|\,
\begin{array}{@{}c@{}}
\text{$H \in \cC_{k,r}(V)$, $\dim Q = k - \frac{r-1}{2}$,}\\
\text{and $Q \subset Q^\perp_+$}
\end{array} \right\}
&&\text{if $r$ is odd.}
\end{align*}
In both cases, it follows that $\hC_{k,r}^\circ(V) \to \cC_{k,r}$ is a locally trivially fibration.  The fiber over $H \in \cC_{k,r}$ is given by
\[
\Gr_{r/2}(H/\rad H)^\iso\quad\text{if $r$ is even,}
\qquad\qquad
\Gr_{\frac{r+1}{2}}(H^+/\rad H)^\iso\quad\text{if $r$ is odd.}
\]

\eqref{it:push-forward}~
Since $\hC_{k,r}(V)$ is smooth and connected, its Zariski-open subset $\hC_{k,r}^\circ(V)$ is as well.  Let $\Gamma = \pi_1(\cC_{k,r}(V))$, and let $\Gamma' = \pi_1(\hC^\circ_{k,r}(V))$.  (The group $\Gamma$ is described in Remark~\ref{rmk:orbit-orthp1}, but we will not need this information.)

Next, let $F$ denote a fiber of $\widehat{\varpi}: \hC^\circ_{k,r}(V) \to \cC_{k,r}(V)$.  We have identified $F$ in the proof of part~\eqref{it:hfib}.  By Proposition~\ref{prop:orbit-orth}\eqref{it:orbit-12}, $F$ has two connected components, and by Lemma~\ref{lem:griso-conn}, each connected component is simply connected.  In other words, $\pi_0(F) \cong \Z/2\Z$ and $\pi_1(F) = 1$. Then the long exact sequence of homotopy groups
\[
\cdots \to \pi_1(F) \to \underbrace{\pi_1(\hC^\circ_{k,r}(V))}_{\Gamma'} \to \underbrace{\pi_1(\cC_{k,r}(V))}_{\Gamma} \to \pi_0(F) \to \pi_0(\hC^\circ_{k,r}(V)) \to \cdots.
\]
shows that $\Gamma' \to \Gamma$ is injective, and that its image is a subgroup of index~$2$.

The category of local systems on $\cC_{k,r}(V)$, resp.~on $\hC^\circ_{k,r}(V)$, is equivalent to the category of $\bk[\Gamma]$-modules, resp.~of $\bk[\Gamma']$-modules.  Under these equivalences, the pullback functor $\widehat{\varpi}^*$ corresponds to the forgetful functor
\begin{equation}\label{eqn:pullback-forget}
\bk[\Gamma]\text{-mod} \to \bk[\Gamma']\text{-mod}.
\end{equation}
Since $\widehat{\varpi}^\circ$ is a locally trivial fibration, the (non-derived) push-forward functor $\cH^0 \circ \widehat{\varpi}_*$ sends local systems to local systems.  It is right-adjoint to $\widehat{\varpi}^*$, so we can identify it with the right adjoint to~\eqref{eqn:pullback-forget}, which is
\[
\Hom_{\bk[\Gamma']}(\bk[\Gamma]^{\mathrm{op}}, {-}): \bk[\Gamma']\text{-mod} \to \bk[\Gamma]\text{-mod}.
\]
Therefore, the local system $\cH^0(\widehat{\varpi}_*\bk)$ corresponds to $\Hom_{\bk[\Gamma']}(\bk[\Gamma]^{\mathrm{op}},\bk)$, which can be identified with the regular representation of $\bk[\Gamma/\Gamma'] = \bk[\Z/2\Z]$.  This local system decomposes as in~\eqref{eqn:reg-decomp} because $2$ is invertible in $\bk$.
\end{proof}

\section{Direct sums of bilinear forms}
\label{sec:dsums}

Let $B_1, \ldots, B_m$ be a collection of nonzero finite-dimensional complex vector spaces.  Assume that each is equipped with nondegenerate bilinear form
\[
\lag{-},{-}\rag_{B_i}: B_i \times B_i \to \C
\]
that is either symmetric or skew-symmetric.  Let $\Iso(B_i) \subset \GL(B_i)$ be the group of linear automorphisms of $B_i$ that preserve $\lag{-},{-}\rag_{B_i}$, and let $\Iso(B_i)^\circ$ be its identity component.  That is, $\Iso(B_i)^\circ$ is either $\SO(B_i)$ or $\Sp(B_i)$, depending on $\lag{-},{-}\rag_{B_i}$.  Let

\[
I_B = \Iso(B_1)^\circ \times \Iso(B_2)^\circ \times \cdots \times \Iso(B_m)^\circ.
\]
We let
\[
n_i = \dim B_i
\qquad\text{and}\qquad n = n_1 + \cdots + n_m.
\]

Fix an integer $k$ such that $0 \le k \le n$.  Let $\uk = (k_1, \ldots, k_m)$ be a tuple of nonnegative integers satisfying $k_1 + \cdots + k_m = k$.  Let $\ur = (r_1, \ldots, r_m)$ be a sequence of symbols satisfying the following constraints:
\begin{itemize}
\item If $\lag{-},{-}\rag_{B_i}$ is skew-symmetric, then $r_i$ is an integer such that $r_i \equiv 0 \pmod 2$ and $\max \{0, 2k_i - n_i\} \le r_i \le k_i$.
\item If $\lag{-},{-}\rag_{B_i}$ is symmetric and $\dim B_i$ is odd, then $r_i$ is an integer such that $\max \{0, 2k_i - n_i\} \le r_i \le k_i$.
\item If $\lag{-},{-}\rag_{B_i}$ is symmetric and $\dim B_i$ is even, then $r_i$ is either an integer such that $\max \{1, 2k_i - n_i\} \le r_i \le k_i$, or it may be one of the two special symbols $0'$ or $0''$.
\end{itemize}
Let
\[
\Omega_k = \{ \text{pairs of $m$-tuples $(\uk,\ur)$ satisfying the conditions above} \}.
\]
Then Propositions~\ref{prop:orbit-symp} and~\ref{prop:orbit-orth} together give us a bijection
\[
\Omega_k \overset{\sim}{\leftrightarrow}
\bigsqcup_{\substack{k_1, \ldots, k_m \ge 0\\ k_1 + \cdots + k_m = k}} \{\text{$I_B$-orbits on $\Gr_{k_1}(B_1) \times \cdots \Gr_{k_m}(B_m)$}\}.
\]
For $(\uk,\ur) \in \Omega_k$, let
\[
\cC_{\uk,\ur}(B) = \cC_{k_1,r_1}(B_1) \times \cdots \times \cC_{k_m,r_m}(B_m)
\]
be the corresponding orbit.

Next, let
\begin{align*}
B_{\le i}&= B_1 \oplus B_2 \oplus \cdots \oplus B_i, \\
B & = B_1 \oplus B_2 \cdots \oplus B_i \oplus \cdots \oplus B_m.
\end{align*}
We also sometimes write $B_{<i} = B_{\le i-1}$.  There is a canonical identification
\[
B_i \cong B_{\le i}/B_{<i}.
\]
For a subspace $H \subset B$, we set
\[
\pr_i H = (H \cap B_{\le i})/(H \cap B_{<i}) \subset B_i.
\]

Let $U_B$ be the subgroup of $\GL(B)$ given by
\[
U_B = \{ g \in \GL(B) \mid \text{for $v \in B_i$, $g(v) \in v + B_{<i}$} \}.
\]
This is a unipotent group.  
We regard $I_B$ as a subgroup of $\GL(B)$ in the obvious way.  This group normalizes $U_B$, and we set
\[
Q_B = I_B \ltimes U_B \subset \GL(B).
\]
Occasionally, we will need to consider slightly larger groups. As in Remark~\ref{rmk:giso-defn}, we set
\[
\GIsoc(B_i) = \C^\times \times \Isoc(B_i).
\]
We also set
\begin{align*}
GI_B &= \GIsoc(B_1) \times \GIsoc(B_2) \times \cdots \times \GIsoc(B_m), \\
GQ_B &= GI_B \ltimes U_B.
\end{align*}

In part~\eqref{it:filt-oddvan} of the following proposition, a ``locally free $\bk$-local system of finite rank'' means a local system of $\bk$-modules whose stalks are finite-rank free $\bk$-modules.

\begin{prop}\label{prop:filt-orbits}
Let $0 \le k \le n$.  For $(\uk,\ur) \in \Omega_k$, let
\[
\cO_{\uk,\ur} = \{ H \in \Gr_k(B) \mid \text{for all $i$, $\dim \pr_i H = k_i$ and $\pr_i H \in \cC_{k_i,r_i}(B_i)$} \}.
\]
\begin{enumerate}
\item Each $\cO_{\uk,\ur}$ is a single $Q_B$-orbit and a single $GQ_B$-orbit.  Moreover, this construction establishes bijections\label{it:filt-orbit}
\[
\Omega_k \overset{\sim}{\leftrightarrow} \{\text{$Q_B$-orbits on $\Gr_k(B)$} \}.
= \{\text{$GQ_B$-orbits on $\Gr_k(B)$} \}
\]
\item The natural map\label{it:filt-bdle}
\[
\cO_{\uk,\ur} \to \cC_{\uk,\ur}
\]
given by $H \mapsto (\pr_1 H, \ldots, \pr_m H)$ makes $\cO_{\uk,\ur}$ into an affine space bundle over $\cC_{\uk,\ur}$.
\item Let $H \in \cO_{\uk,\ur}$.  We have\label{it:filt-p1}
\[
\Stab_{Q_B}(H)/\Stab_{Q_B}(H)^\circ \cong \Stab_{GQ_B}(H)/\Stab_{GQ_B}(H)^\circ\cong (\Z/2\Z)^d
\]
where
\[
d = |\{ i \mid \text{$\lag{-},{-}\rag_{B_i}$ is symmetric and $\max\{0,2k_i - n_i\} < r_i$} \}.
\]
\item Let $\bk$ be a commutative ring in which $2$ is invertible, and let $\cL$ be locally free $Q_B$-equivariant $\bk$-local system of rank~$1$ on $\cO_{\uk,\ur}$.  Then $\coh^\bullet_{Q_B}(\cO_{\uk,\ur},\cL)$ is even and free.\label{it:filt-oddvan}
\end{enumerate}
\end{prop}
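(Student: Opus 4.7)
The plan is to establish part~(2) first, then derive parts~(1) and~(3) from it, and finally obtain~(4) by affine-bundle descent combined with the $m=1$ results of Section~\ref{sec:orbits}. The core technical input is a parametrization of the fibers of $H \mapsto (\pr_1 H, \ldots, \pr_m H)$. Given $(P_1,\ldots,P_m) \in \cC_{\uk,\ur}$, a subspace $H$ with $\pr_i H = P_i$ is built inductively along the filtration $\{H \cap B_{\le i}\}$: knowing $H \cap B_{<i}$, the possible extensions $H \cap B_{\le i}$ with projection $P_i$ form a torsor over $\Hom(P_i,\, B_{<i}/(H \cap B_{<i}))$. This exhibits $\cO_{\uk,\ur} \to \cC_{\uk,\ur}$ as an iterated Hom-bundle, trivialized Zariski-locally by local algebraic splittings of the surjections $B_{\le i} \to B_i$, giving~(2).

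For part~(1), $I_B$ acts transitively on $\cC_{\uk,\ur} = \prod_i \cC_{k_i,r_i}(B_i)$ by Propositions~\ref{prop:orbit-symp} and~\ref{prop:orbit-orth}, so it suffices to show $U_B$ acts transitively on each fiber of the map from~(2). Given $H$ with $\pr_i H = P_i$, inductively choose complements $\tilde P_i \subset B_{\le i}$ to $H \cap B_{<i}$ inside $H \cap B_{\le i}$; then $H = \tilde P_1 \oplus \cdots \oplus \tilde P_m$ with each $\tilde P_i$ mapping isomorphically onto $P_i$ under $B_{\le i} \to B_i$. A unipotent element $u \in U_B$ carrying the ``split'' subspace $P_1 \oplus \cdots \oplus P_m$ to $H$ is constructed by extending each isomorphism $P_i \simto \tilde P_i$ to a linear map $B_i \to B_{\le i}$ (identically on a chosen complement to $P_i$ in $B_i$). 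The $GQ_B$-claim is immediate because the extra $\C^\times$-factors act on $B$ by scalars, fixing every subspace. For part~(3), the transitivity just established yields a surjection $\Stab_{Q_B}(H) \twoheadrightarrow \Stab_{I_B}(\pr_* H)$ whose kernel is a closed subgroup of the unipotent group $U_B$, hence connected in characteristic zero. Therefore the component group of $\Stab_{Q_B}(H)$ agrees with that of $\Stab_{I_B}(\pr_* H) = \prod_i \Stab_{\Isoc(B_i)}(P_i)$, and the asserted count of $\Z/2\Z$-factors follows from Propositions~\ref{prop:orbit-symp}\eqref{it:orbit-stab} and~\ref{prop:orbit-orth}\eqref{it:orbit-orthstab}; the $GQ_B$ version follows from Remark~\ref{rmk:giso-defn}.

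For part~(4), set $p: \cO_{\uk,\ur} \to \cC_{\uk,\ur}$. Because $p$ has contractible affine fibers, pullback identifies local systems on $\cC_{\uk,\ur}$ with local systems on $\cO_{\uk,\ur}$; write $\cL = p^* \cM$. Then $Rp_*\cL \cong \cM$ is concentrated in degree zero, and the $Q_B$-equivariant structure on $\cL$ descends to $\cM$ with $U_B$ acting trivially on the base. Since $U_B$ is connected unipotent, one obtains
\[
\coh^\bullet_{Q_B}(\cO_{\uk,\ur}, \cL) \cong \coh^\bullet_{Q_B}(\cC_{\uk,\ur}, \cM) \cong \coh^\bullet_{I_B}(\cC_{\uk,\ur}, \cM).
\]
Since $\cM$ is rank one and characters of the stabilizer component group $\prod_i \Stab_{\Isoc(B_i)}(P_i)/\Stab_{\Isoc(B_i)}(P_i)^\circ$ factor as products, we may write $\cM \cong \cM_1 \boxtimes \cdots \boxtimes \cM_m$ with each $\cM_i$ equal to $\bk$ or $\sgn$. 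The equivariant Künneth formula then yields
\[
\coh^\bullet_{I_B}(\cC_{\uk,\ur}, \cM) \cong \bigotimes_i \coh^\bullet_{\Isoc(B_i)}(\cC_{k_i,r_i}(B_i), \cM_i),
\]
and each factor is even and free by Propositions~\ref{prop:orbit-symp}\eqref{it:orbit-eqcoh} and~\ref{prop:orbit-orth}\eqref{it:orbit-ortheqcoh}; since $2$ is invertible in $\bk$, tensoring preserves this property.

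The main obstacle is the careful bookkeeping behind the iterated-bundle construction of~(2) and the accompanying unipotent-element construction of~(1): getting the $I_B$- and $U_B$-equivariance to line up correctly through the filtration on $B$ is the step that truly controls the rest of the argument. Once that parametrization is in hand, part~(3) is a formal consequence of transitivity plus connectedness of closed subgroups of unipotent groups in characteristic zero, and part~(4) reduces by Künneth to the single-factor results already proved in Section~\ref{sec:orbits}.
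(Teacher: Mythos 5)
Your proposal is correct and follows essentially the same route as the paper: part~\eqref{it:filt-bdle} is established by exhibiting $\cO_{\uk,\ur} \to \cC_{\uk,\ur}$ as an iterated bundle, part~\eqref{it:filt-orbit} by combining transitivity of $I_B$ on $\cC_{\uk,\ur}$ with transitivity of $U_B$ on the fibers, and parts~\eqref{it:filt-p1} and~\eqref{it:filt-oddvan} by reducing to the $m=1$ results of Section~\ref{sec:orbits}. The differences are cosmetic: you build up the filtration from $B_1$ and give a direct hands-on construction of the unipotent element $u \in U_B$ moving the split model $P_1 \oplus \cdots \oplus P_m$ to $H$, whereas the paper peels off $B_m$ and argues by induction on $m$ with a clean vector-bundle description $\mathcal V = \{(H',K,\varphi)\}$ of the relevant fiber; your route is more explicit while the paper's makes Zariski-local triviality in~\eqref{it:filt-bdle} slightly more transparent (your appeal to ``local algebraic splittings'' deserves a word or two more since the relevant torsors are over a vector bundle whose fiber varies with the base point, though such torsors are indeed Zariski-locally trivial). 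Your write-up of~\eqref{it:filt-p1} via the surjection $\Stab_{Q_B}(H) \twoheadrightarrow \Stab_{I_B}(\pr_* H)$ with connected unipotent kernel, and of~\eqref{it:filt-oddvan} via descent along the affine bundle followed by an equivariant K\"unneth decomposition $\cM \cong \cM_1 \boxtimes \cdots \boxtimes \cM_m$, supplies detail that the paper leaves implicit when it cites Propositions~\ref{prop:orbit-symp} and~\ref{prop:orbit-orth}; this is a welcome expansion and the claims check out (the K\"unneth step is unproblematic because each factor is even and free over $\bk$).
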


\begin{proof}
We will first prove~\eqref{it:filt-bdle} by induction on $m$.  If $m = 1$, then $\cO_{\uk,\ur} = \cC_{\uk,\ur}$, and there is nothing to prove.  Otherwise, let $B' = B_{\le m-1}$, and let $n' = n_1 + \cdots + n_{m-1}$.  Given $(\uk,\ur) = ((k_1,\ldots, k_m),(r_1,\ldots, r_m))$, let $k' = k_1 + \cdots + k_{m-1}$, and let
\[
(\uk',\ur') = ((k_1,\ldots,k_{m-1}),(r_1,\ldots,r_{m-1})).
\]
This pair labels a subset $\cO_{\uk',\ur'} \subset \Gr_{k'}(B')$.  The map $\cO_{\uk,\ur} \to \cC_{\uk,\ur}$ in the statement of the proposition factors as
\begin{multline}\label{eqn:afffib}
\cO_{\uk,\ur} \xrightarrow{H \mapsto (H \cap B', \pr_m H)} 
\cO_{\uk',\ur'} \times \cC_{k_m,r_m}(B_m) \\
\xrightarrow{(H',K) \mapsto (\pr_1 H', \ldots, \pr_{m-1} H',K)} \cC_{k_1,r_1}(B_1) \times \cdots \times \cC_{k_m,r_m}(B_m) = \cC_{\uk,\ur}.
\end{multline}
The second map in~\eqref{eqn:afffib} an affine space bundle by induction, so it is enough to prove that the first map is an affine space bundle.

Let $p: B \to B'$ be the projection map with respect to the decomposition $B = B' \oplus B_m$.  For any $H \in \cO_{\uk,\ur}$, the map $p|_H: H \to B'$ induces a map $\bar p_H: H/H \cap B' \to B'/H \cap B'$.  The assignment $H \mapsto (H \cap B', \pr_m H, \bar p_H)$ defines an isomorphism of $\cO_{\uk,\ur}$ with the variety $\mathcal{V}$ given by
\[
\mathcal{V} = \left\{(H',K,\varphi) \,\Big|\, \begin{array}{c}
\text{$H' \in \cO_{\uk',\ur'}$, $K \in \cC_{k_m,r_m}(B_m)$,} \\
\text{and $\varphi: K \to B'/H'$ a linear map}
\end{array} \right\}.
\]
Since $\mathcal{V}$ is a vector bundle over $\cO_{\uk',\ur'} \times \cC_{k_m,r_m}(B_m)$, we are done.

Next, we prove~\eqref{it:filt-orbit}.  It is clear that the extra factors copies of $\C^\times$ that appear in $GQ_B$ preserve each $\cO_{\uk,\ur}$, so it is enough to show that each $\cO_{\uk,\ur}$ is a single $Q_B$-orbit.  For this claim, we again proceed by induction on $m$.  If $m = 1$, the statement is trivial.  Otherwise, let $B'$ be as above.  Let
\[
U_{B'} = \{ g \in U_B \mid \text{$g(v) = v$ for $v \in B_m$} \},
\qquad
X = \{ g \in U_B \mid \text{$g(v) = v$ for $v \in B'$} \}.
\]
Then $X$ is a normal subgroup of $U_B$, and $U_B = U_{B'} \ltimes X$.  By induction, the variety $\cO_{\uk',\ur} \times \cC_{k_m,r_m}(B_m)$ is a single orbit under the group $I_B \ltimes U_{B'} = Q_{B'} \times \Isoc(B_m)$.

On the other hand, given $H' \in \cO_{\uk',\ur'}$ and $K \in \cC_{k_m,r_m}(B_m)$, the group $X$ acts transitively on the set of linear maps $K \to B'/H'$.  That is, $X$ acts transitively on the fibers of the first map in~\eqref{eqn:afffib}.  The desired conclusion follows.

For part~\eqref{it:filt-p1}, the statement for $Q_B$ follows from Proposition~\ref{prop:orbit-symp}\eqref{it:orbit-stab} and Proposition \ref{prop:orbit-orth}\eqref{it:orbit-orthstab}.  This statement implies the version for $GQ_B$ by Remark~\ref{rmk:giso-defn}.

Similarly, part~\eqref{it:filt-oddvan} follows from Proposition~\ref{prop:orbit-symp}\eqref{it:orbit-eqcoh} and Proposition \ref{prop:orbit-orth}\eqref{it:orbit-ortheqcoh}.
\end{proof}

\subsection{Tangent spaces and slices}
\label{ss:tanslice}

Let $(\uk,\ur) \in \Omega_k$, and let $H \in \cO_{\uk,\ur}$.  For $1 \le i \le m$, choose a complement $K_i$ to $H \cap B_{<i}$ in $H \cap B_{\le i}$.  We thus have
\[
H \cap B_{\le i} = K_1 \oplus \cdots \oplus K_i.
\]
Next, for each $i$, apply Lemma~\ref{lem:v4-decomp} to the subspace $\pr_i H \in \Gr_{k_i}(B_i)$ to obtain a decomposition
\[
B_i = M_{i,1} \oplus M_{i,2} \oplus M_{i,3} \oplus M_{i,4}
\]
with
\[
\pr_i H = M_{i,1} \oplus M_{i,2}.
\]
Via the isomorphism $K_i \simto \pr_i H$, we transfer this to a decomposition
\[
K_i = K_{i,1} \oplus K_{i,2}
\qquad\text{where}\qquad
K_{i,1} \cong M_{i,1}
\quad\text{and}\quad
K_{i,2} \cong M_{i,2}.
\]
We obtain a decomposition
\[
B_{\le i} = \bigoplus_{j = 1}^i (K_{j,1} \oplus K_{j,2} \oplus M_{j,3} \oplus M_{j,4}).
\]
By Lemma~\ref{lem:gr-tan}, we obtain an isomorphism
\[
T_H\Gr_k(V) = \bigoplus_{1\le i, j \le m} \Hom(K_{i,1} \oplus K_{i,2}, M_{j,3} \oplus M_{j,4}).
\]
In the spirit of~\eqref{eqn:gr-tan-decomp}, we can write elements of $T_H\Gr_k(V)$ as $m^2$-tuples of $2 \times 2$ matrices:
\begin{multline}\label{eqn:gr-tan-sum}
T_H\Gr_k(V) = \\ \left\{ (A_{ij})_{1 \le i,j, \le m} \, \Big|\,
A_{ij} = 
\begin{bmatrix}
a_{ij} & b_{ij} \\
c_{ij} & d_{ij}
\end{bmatrix}
\in \Hom(K_{i,1} \oplus K_{i,2}, M_{j,3} \oplus M_{j,4}) \right\}.
\end{multline}

\begin{lem}\label{lem:filt-tan}
Let $H \in \Gr_k(V)$ be as above, and let $\cC$ be the $Q_B$-orbit of $H$.  In terms of~\eqref{eqn:gr-tan-sum}, the tangent space to $\cC$ at $H$ is given by
\[
T_H\cC = \left\{ \left( A_{ij} = 
\begin{bmatrix}
a_{ij} & b_{ij} \\
c_{ij} & d_{ij}
\end{bmatrix} \right)_{1 \le i,j \le m} \,\Big|\,
\begin{array}{c}
\text{$A_{ij} = 0$ if $i < j$, and} \\
\text{$c_{ii}^\dag = -c_{ii}$ for all $i$}
\end{array}
\right\}.
\]
\end{lem}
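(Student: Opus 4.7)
The plan is to compute $T_H \cC$ as the image of $\Lie(Q_B)$ under the differential of the orbit map $Q_B \to \cC$, $g \mapsto g \cdot H$, using the semidirect product decomposition $Q_B = I_B \ltimes U_B$ so that $\Lie(Q_B) = \Lie(I_B) \oplus \Lie(U_B)$ as vector spaces. I will handle the two summands separately, then combine. Throughout, I regard $\End(B)$ as $m \times m$ block matrices with respect to the decomposition $B = \bigoplus_j (K_{j,1} \oplus K_{j,2} \oplus M_{j,3} \oplus M_{j,4})$, so that the differential $\End(B) \to T_H \Gr_k(B)$ of the $\GL(B)$-action is the ``block restriction-projection'' map analogous to~\eqref{eqn:tansp-map}: a block $\phi_{ji}: B_i \to B_j$ contributes to $A_{ij}$ only through its restriction $K_{i,1} \oplus K_{i,2} \to M_{j,3} \oplus M_{j,4}$, since $K_{i,1} \oplus K_{i,2} \subset H$ and $M_{j,3} \oplus M_{j,4}$ is the chosen complement to $K_{j,1} \oplus K_{j,2}$ inside $B_j \subset V/H$-summand.

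First I will treat $\Lie(U_B)$. Its elements are the linear maps $\phi: B \to B$ with $\phi(B_i) \subset B_{<i}$ for every $i$. Such a $\phi$ is strictly lower block-triangular in the sense that $\phi_{ji} = 0$ whenever $j \ge i$. Hence its image in $T_H \Gr_k(B)$ under the map above contributes only to blocks $A_{ij}$ with $j < i$, and conversely any choice of maps $K_{i,1} \oplus K_{i,2} \to M_{j,3} \oplus M_{j,4}$ for $j < i$ can be realized (extend freely to a map $B_i \to B_{<i}$ and zero on all other $B_s$). So the image of $\Lie(U_B)$ is exactly the subspace of tuples $(A_{ij})$ that vanish for $i \le j$ and are otherwise unconstrained.

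Next I will treat $\Lie(I_B) = \bigoplus_i \Lie(\Isoc(B_i))$. Because $I_B$ is a product of factors acting independently on each $B_i$, any $(\theta_1, \ldots, \theta_m) \in \Lie(I_B)$ is block-diagonal in our matrix notation, so it contributes only to the diagonal blocks $A_{ii}$. The contribution to a given $A_{ii}$ depends only on $\theta_i \in \Lie(\Isoc(B_i))$, and is precisely the tangent space to the $\Iso(B_i)$-orbit of $\pr_i H$ in $\Gr_{k_i}(B_i)$ at that point. By Lemma~\ref{lem:gr-tan-orbit} applied inside $B_i$ with the decomposition $B_i = M_{i,1} \oplus M_{i,2} \oplus M_{i,3} \oplus M_{i,4}$, this image is exactly the set of $A_{ii} = \bigl[\begin{smallmatrix} a_{ii} & b_{ii} \\ c_{ii} & d_{ii}\end{smallmatrix}\bigr]$ with $c_{ii}^\dag = -c_{ii}$ and the other entries arbitrary.

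Finally, summing the two images gives the stated description: off-diagonal blocks with $i < j$ are forced to be zero (both summands vanish there), strictly lower blocks ($i > j$) are arbitrary (contributed by $\Lie(U_B)$), and each diagonal block $A_{ii}$ satisfies $c_{ii}^\dag = -c_{ii}$ with the other entries free (contributed by $\Lie(I_B)$). This is a bookkeeping argument, so there is no substantive obstacle; the one thing to be careful about is the sign/transpose conventions relative to~\eqref{eqn:v4-adjoint}, which is why I invoke Lemma~\ref{lem:gr-tan-orbit} directly for the diagonal blocks rather than redoing the computation by hand.
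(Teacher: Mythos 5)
The proposal follows essentially the same approach as the paper: decompose $\Lie(Q_B) = \Lie(I_B) \oplus \Lie(U_B)$, identify the image of $\Lie(U_B)$ with the strictly lower-triangular blocks, and compute the diagonal block coming from $\Lie(I_B)$ by invoking Lemma~\ref{lem:gr-tan-orbit}. The conclusion is correct.

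There is, however, one imprecision worth flagging, because it is where the argument as written is not airtight. You declare your $m \times m$ block structure with respect to the decomposition $B = \bigoplus_j (K_{j,1}\oplus K_{j,2}\oplus M_{j,3}\oplus M_{j,4})$, but then treat $\Lie(I_B)$ (and implicitly $\Lie(U_B)$) as though it were block-structured with respect to $B = \bigoplus_j B_j$ and concluded from this that $\Lie(I_B)$ contributes only to the diagonal blocks $A_{ii}$. These are genuinely different decompositions of $B$: by definition $K_i = K_{i,1}\oplus K_{i,2}$ is a complement to $H \cap B_{<i}$ inside $H \cap B_{\le i}$, so $K_i$ lives in $B_{\le i}$ and generally ``leans'' into $B_{<i}$ rather than sitting inside $B_i$. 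As a result an element $\theta_i \in \Lie(\Isoc(B_i))$ is \emph{not} block-diagonal with respect to the $K$-$M$ grading; it is only lower triangular (because both decompositions refine the filtration $B_{\le 1}\subset\cdots\subset B_{\le m}$), and it can contribute nontrivially to strictly lower blocks $A_{\ell\ell'}$ with $\ell \geq i \geq \ell'$. This does not change the answer, since those extra contributions land in the strictly lower-triangular part, which you have already shown is entirely filled out by $\Lie(U_B)$ — but the step ``$\Lie(I_B)$ contributes only to $A_{ii}$'' needs to be replaced by ``$\Lie(I_B)$ maps into the lower-triangular part, and its image in the $A_{ii}$ block is exactly the image of $\Lie(\Isoc(B_i))$ from Lemma~\ref{lem:gr-tan-orbit}.'' The same filtration-compatibility observation is also what makes your surjectivity argument for $\Lie(U_B)$ go through (one should extend a prescribed strictly lower-triangular $\psi \in \Hom(H,V/H)$ to $\phi \in \End(B)$ by choosing the lift into $\bigoplus_j(M_{j,3}\oplus M_{j,4})$ and setting $\phi$ to be zero on that complement; the resulting $\phi$ satisfies $\phi(B_{\le i})\subset B_{<i}$, hence lies in $\Lie(U_B)$), rather than the ``extend freely to a map $B_i\to B_{<i}$'' phrasing, which does not literally parse since $K_{i,1}\oplus K_{i,2}\not\subset B_i$. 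For what it is worth, the paper's own proof is equally terse at exactly this point.
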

\begin{proof}
We follow the strategy of the proof of Lemma~\ref{lem:gr-tan-orbit}.  Consider the first the vector space
\[
\fgl(V) = \bigoplus_{1 \le i,j \le m} \Hom(B_i,B_j).
\]
The Lie algebra of $Q_B$ can be written as
\[
\Lie(Q_B) = \bigoplus_{1 \le i \le m} \Lie(\Iso(B_i)) \oplus \bigoplus_{1 \le j < i \le m} \Hom(B_i, B_j).
\]
As in the proof of Lemma~\ref{lem:gr-tan}, there is a natural map $\fgl(V) \to T_H\Gr_k(V)$, leading to the description in~\eqref{eqn:gr-tan-sum}.  The image of $\Lie(Q_B)$ under this map is
\[
\bigoplus_{1 \le i \le m} \left(\begin{array}{c} \text{image of} \\
\Lie(\Iso(B_i)) \end{array} \right)
\oplus
\bigoplus_{1 \le j < i \le m} \Hom(K_{i,1} \oplus K_{i,2}, M_{j,3} \oplus M_{j,4}).
\]
In terms of~\eqref{eqn:gr-tan-sum}, this shows that a point $(A_{ij})_{1 \le i,j \le m}$ in the image satisfies $A_{ij} = 0$ whenever $j > i$.  When $j = i$, $A_{ij}$ must lie in the image of $\Lie(\Iso(B_i)) \to \Gr_{k_i}(B_i)$, and this image is described by Lemma~\ref{lem:gr-tan-orbit}.
\end{proof}

\begin{cor}\label{cor:filt-slice1}
Suppose $H \in \Gr_k(V)$ satisfies
\begin{equation}\label{eqn:H-niceslice}
H = \pr_1 H \oplus \pr_2 H \oplus \cdots \oplus \pr_m H,
\end{equation}
and let $\cC$ be the $Q_B$-orbit of $H$.  There is a transverse slice $S$ to $\cC$ at $H$ and an isomorphism
\begin{equation}\label{eqn:filt-slice}
S \cong 
\left\{ \left( A_{ij} = 
\begin{bmatrix}
a_{ij} & b_{ij} \\
c_{ij} & d_{ij}
\end{bmatrix} \right)_{1 \le i,j \le m} \,\Big|\,
\begin{array}{c}
\text{$A_{ij} = 0$ if $i > j$, and for all $i$,} \\
\text{$A_{ii} = [\begin{smallmatrix} 0 & 0 \\ c_{ii} & 0 \end{smallmatrix}]$ where $c_{ii}^\dag = -c_{ii}$ for all $i$}
\end{array}
\right\}.
\end{equation}
that identifies $H \in S$ with $0 \in \Hom(M_1,M_4)$.

Moreover, there is a cocharacter $\psi: \Gm \to GI_B$ such that the induced $\Gm$-action on $\Gr_k(V)$ preserves $S$, and acts on it linearly with strictly positive weights.
\end{cor}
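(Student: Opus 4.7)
The plan is to mirror Corollary~\ref{cor:gr-tan-slice}, now using Lemma~\ref{lem:filt-tan} in place of Lemma~\ref{lem:gr-tan-orbit}. First I would check that the linear subspace on the right-hand side of~\eqref{eqn:filt-slice} is a direct-sum complement to $T_H\cC$ inside the description~\eqref{eqn:gr-tan-sum} of $T_H\Gr_k(V)$: the strict-triangular condition $A_{ij}=0$ for $i>j$ complements the condition $A_{ij}=0$ for $i<j$ that defines $T_H\cC$ off the diagonal, and on each diagonal block the prescribed form of $A_{ii}$ is complementary to the constraint of Lemma~\ref{lem:gr-tan-orbit}, exactly as in the $m=1$ case. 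The hypothesis~\eqref{eqn:H-niceslice} lets me take $K_i = H \cap B_i$, so that $B = \bigoplus_i(K_{i,1} \oplus K_{i,2} \oplus M_{i,3} \oplus M_{i,4})$ and the summand $H' := \bigoplus_i(M_{i,3} \oplus M_{i,4})$ is a canonical complement to $H$ in $B$. Applying Lemma~\ref{lem:gr-tan}\eqref{it:gr-open} with this complement produces an open immersion $\Hom(H,H') \hookrightarrow \Gr_k(V)$, and I would define $S$ as the intersection of its image with the locally closed linear subspace appearing in~\eqref{eqn:filt-slice}. By Lemma~\ref{lem:gr-tan}\eqref{it:gr-tan} its tangent space at $H$ is then exactly our chosen complement to $T_H\cC$, so $S$ is a transverse slice through $H$.

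For the cocharacter, I would build $\psi$ componentwise as $\psi(z) = \prod_i (z^{\alpha_i}, \psi_i^\circ(z)) \in \prod_i \GIsoc(B_i) = GI_B$, where $\psi_i^\circ : \Gm \to \Isoc(B_i)$ is the cocharacter from Corollary~\ref{cor:gr-tan-slice} applied to $B_i$ (acting as $z^{-1}$ on $M_{i,1}$, $z$ on $M_{i,4}$, and trivially on $M_{i,2} \oplus M_{i,3}$), while the central scalar $z^{\alpha_i}$ rescales all of $B_i$ uniformly. Since $H$ is a direct sum of $\psi$-weight subspaces, $\psi$ stabilizes $H$, and each $\Hom$-block in~\eqref{eqn:gr-tan-sum} is a single $\psi$-weight space. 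Consequently both the open subvariety produced by Lemma~\ref{lem:gr-tan}\eqref{it:gr-open} and the linear conditions cutting out $S$ are $\psi$-invariant, and $\psi$ acts linearly on $S$.

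It remains to choose the $\alpha_i$ so that all surviving weights are strictly positive. On a diagonal block $i=j$ only $c_{ii} : K_{i,1} \to M_{i,4}$ contributes, with weight $2$ independently of the $\alpha_i$, matching Corollary~\ref{cor:gr-tan-slice}. On an upper off-diagonal block $i<j$ the four subblocks $\Hom(K_{i,a}, M_{j,b})$ carry weights of the form $\alpha_j - \alpha_i + \epsilon$ with $\epsilon \in \{0,1,2\}$, so picking $\alpha_1 < \alpha_2 < \cdots < \alpha_m$ (for instance $\alpha_i = i$) makes all these weights strictly positive. The main subtlety here is precisely that no cocharacter into $I_B$ alone can succeed: the block $\Hom(K_{i,2}, M_{j,3})$ would have weight $0$, so passing to $GI_B$ and exploiting the central $\C^\times$ factor in each $\GIsoc(B_i)$ is essential for breaking this degeneracy.
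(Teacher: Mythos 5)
Your proof is correct and follows the same approach as the paper: same choice of $K_i$, same use of Lemma~\ref{lem:gr-tan}\eqref{it:gr-open} to promote the linear complement of Lemma~\ref{lem:filt-tan} to a transverse slice, and the same cocharacter (the paper factors $\psi$ as $\psi_1\psi_2$, which is exactly your $\prod_i(z^{\alpha_i},\psi_i^\circ(z))$, with strictly increasing central exponents). Your closing remark, that a cocharacter into $I_B$ alone would leave the $\Hom(K_{i,2},M_{j,3})$ blocks in weight $0$ and that the central $\C^\times$ factors in $GI_B$ are therefore essential, is a helpful aside that the paper leaves implicit.
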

\begin{proof}
Thanks to the assumption~\eqref{eqn:H-niceslice}, we may choose
\begin{equation}\label{eqn:K-niceslice}
K_i = \pr_i H \subset B_i
\end{equation}
for all $i$.  We work with this choice for the remainder of this proof.

Let $X$ denote the right-hand side of~\eqref{eqn:filt-slice}.  In view of of~\eqref{eqn:gr-tan-sum} and Lemma~\ref{lem:filt-tan}, we can regard $X$ as a linear subspace of $T_H\Gr_k(V)$ that is complementary to $T_H\cC$.  Via Lemma~\ref{lem:gr-tan}\eqref{it:gr-open}, we can identify this subspace with a locally closed subvariety $S \subset \Gr_k(V)$ that meets $\cC$ transversely.

Next, choose a sequence of integers
\[
e_1 < e_2 < \cdots < e_m.
\]
Define $\psi_1: \Gm \to GI_B$ and $\psi_2: \Gm \to I_B$ as follows:
\begin{align*}
\psi_1(z)(v) &= z^{e_i}v\quad\text{if $v \in B_i$,}
&
\psi_2(z)(v) &= 
\begin{cases}
z^{-1}v & \text{if $v \in K_{i,1}$,} \\
zv & \text{if $v \in M_{i,4}$,} \\
v & \text{if $v \in K_{i,2}$ or $v \in M_{1,3}$.}
\end{cases}
\end{align*}
(Here, the assumption~\eqref{eqn:K-niceslice} guarantees that $\psi_2$ actually takes values in $I_B$.)  Then define
\[
\psi: \Gm \to GI_B
\qquad\text{by}\qquad
\psi(z) = \psi_1(z)\psi_2(z).
\]
The induced action on $T_H\Gr_k(V)$ is given by
\[
\psi(z)\left(\begin{bmatrix}
a_{ij} & b_{ij} \\ c_{ij} & d_{ij}
\end{bmatrix}\right)
=
\begin{bmatrix}
z^{e_j-e_i+1} a_{ij} & z^{e_j-e_i}b_{ij} \\ z^{e_j-e_i+2} c_{ij} & z^{e_j-e_i+1}d_{ij}
\end{bmatrix}
\]
In particular, the action on $S$ has strictly positive weights.
\end{proof}

\begin{cor}\label{cor:filt-slice}
Let $H \in \cC$ be as in Lemma~\ref{lem:filt-tan}, and let $\cC$ be the $Q_B$-orbit of $H$.  There is a transverse slice $S$ to $\cC$ at $H$ and a cocharacter $\psi: \Gm \to GQ_B$ such that the induced $\Gm$-action on $\Gr_k(V)$ preserves $S$, and acts on it linearly with strictly positive weights.
\end{cor}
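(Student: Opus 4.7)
The plan is to reduce to Corollary~\ref{cor:filt-slice1} by transporting $H$ via the $Q_B$-action to a point satisfying the splitting hypothesis~\eqref{eqn:H-niceslice}. First I would set
\[
H' = \pr_1 H \oplus \pr_2 H \oplus \cdots \oplus \pr_m H \subset B.
\]
By construction $H'$ satisfies~\eqref{eqn:H-niceslice}, and a direct computation using the definition of $\pr_i$ shows $\pr_i H' = \pr_i H \in \cC_{k_i,r_i}(B_i)$, so $H' \in \cO_{\uk,\ur}$. By Proposition~\ref{prop:filt-orbits}\eqref{it:filt-orbit}, $H$ and $H'$ lie in the same $Q_B$-orbit $\cC$, so there exists $g \in Q_B$ with $g \cdot H' = H$.

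Next, invoking Corollary~\ref{cor:filt-slice1} at $H'$ produces a transverse slice $S'$ to $\cC$ at $H'$ together with a cocharacter $\psi': \Gm \to GI_B \subset GQ_B$ that preserves $S'$ and acts on it linearly with strictly positive weights. I would then set
\[
S = g \cdot S' \qquad\text{and}\qquad \psi(z) = g\, \psi'(z)\, g^{-1}.
\]
The remaining verifications are formal: $\psi(z) \in GQ_B$ because $GQ_B$ is a group containing both $g$ and $\psi'(z)$; $S$ is a transverse slice to $\cC$ at $H$ because $g$ acts on $\Gr_k(B)$ as an algebraic automorphism sending $H'$ to $H$ and preserving $\cC$, and such automorphisms carry transverse slices to transverse slices; the identity $\psi(z) \cdot (g s') = g\, \psi'(z) s' \in g S' = S$ shows that $\psi$ preserves $S$; and the $\Gm$-action on $S$ via $\psi$ is intertwined with the $\Gm$-action on $S'$ via $\psi'$ by the isomorphism $S' \simto S$ given by $s' \mapsto g s'$, so it inherits the property of being linear with strictly positive weights.

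I do not foresee a genuine obstacle here: once the splitting point $H'$ is produced and Corollary~\ref{cor:filt-slice1} is invoked, the remaining steps follow from the equivariance of the construction under conjugation by elements of $Q_B$, and amount to little more than unwinding definitions. The corollary is really just the observation that Corollary~\ref{cor:filt-slice1} extends from its privileged basepoints to arbitrary basepoints in the orbit by $Q_B$-transport.
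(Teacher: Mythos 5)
Your proof is correct and follows the same route as the paper's: produce the split point $H' = \pr_1 H \oplus \cdots \oplus \pr_m H$, invoke Corollary~\ref{cor:filt-slice1} there, and transport the slice and cocharacter to $H$ by an element of $GQ_B$. The paper compresses the transport step into a single sentence; you have simply written out the conjugation and the verification that the transported $S$ and $\psi$ retain the required properties.
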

\begin{proof}
Consider the point $H' \in \Gr_k(V)$ given by
\[
H' = \pr_1 H \oplus \pr_2 H \oplus \cdots \oplus \pr_m H.
\]
By construction, $\pr_i H' = \pr_i H$ for all $i$, so by Proposition~\ref{prop:filt-orbits}, they lie in the same $GQ_B$-orbit $\cC$.  In Corollary~\ref{cor:filt-slice1}, we have constructed a transverse slice to $\cC$ at $H'$.  By acting by a suitable element of $GQ_B$, we obtain the desired slice to $\cC$ at $H$.
\end{proof}

\section{Resolution of singularities for orbit closures}
\label{sec:resoln2}

Let $(\uk,\ur) \in \Omega$.  The goal of this section is to construct a resolution of singularities $\mu: X(\uk,\ur) \to \overline{\cO_{\uk,\ur}}$, generalizing Proposition~\ref{prop:m1-resoln}.  We will also prove that the cohomology of $X(\uk,\ur)$ and of the fibers of $\mu$ is even and free.

We define $X(\uk,\ur)$ as follows.  A point of $X(\uk,\ur)$ is a list of $3m$ vector spaces, denoted by
\[
P_i, H_i \subset B_{\leq i}
\qquad\text{and}\qquad
\tilde P_i \subset B_i
\qquad\text{for $1 \le i \le m$,}
\]
satisfying the following conditions\footnote{If $r_i$ is one of the special symbols $0'$ or $0''$, it should be interpreted as $0$ in these equations.} for each $i$:
\[
\begin{aligned}
\dim \tilde P_i &= k_i - r_i, \\
\dim P_i &= k_1 + \cdots + k_{i-1} + k_i - r_i, \\
\dim H_i &= k_1 + \cdots + k_{i-1} + k_i, \\
\end{aligned}
\qquad
\begin{aligned}
\tilde P_i &\subset \tilde P_i^\perp, \\
\pr_i P_i&\subset \tilde P_i, \\
\pr_i H_i &\subset \tilde P_i^\perp, \\
H_{i-1} &\subset P_i \subset H_i
\end{aligned}
\]
with one extra condition:
\[
\text{if $r_i = 0'$ or $0''$, then $\tilde P_i \in \cC_{k_i,r_i} \subset \Gr_{k_i}(B_i)$.}
\]

By construction, $X(\uk,\ur)$ is a closed subvariety of a product of Grassmannians, so it is a projective variety.  The defining conditions imply that each $\tilde P_i$ is an isotropic subspace of $B_i$, and that $P_1 = \tilde P_1$.  On the other hand, we have $\dim H_m = k$.  There is a map
\[
\mu: X(\uk,\ur) \to \Gr_k(B)
\]
that sends a point to $H_m$.  

\begin{lem}\label{lem:xkr-smooth}
The variety $X(\uk,\ur)$ is a smooth, connected, and simply-connected.  Moreover, for any commutative ring $\bk$, $\coh^\bullet(X(\uk,\ur),\bk)$ is even and free. 
\end{lem}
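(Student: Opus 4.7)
The plan is induction on $m$. For the base case $m = 1$, the defining conditions force $P_1 = \tilde P_1$, so $X(\uk,\ur)$ is either $\tC_{k_1,r_1}(B_1)$ from Proposition~\ref{prop:m1-resoln}, or---when $r_1 \in \{0', 0''\}$---a specific connected component thereof. The projection $(P_1, H_1) \mapsto P_1$ realizes $\tC_{k_1, r_1}(B_1)$ as a Grassmannian bundle over $\Gr_{k_1 - r_1}(B_1)^\iso$ (or a chosen component) with fiber $\Gr_{r_1}(P_1^\perp/P_1)$. By Lemma~\ref{lem:griso-conn} and Corollary~\ref{cor:gr-oddvan}, the base has the required properties, as do the Grassmannian fibers; the Serre spectral sequence together with the homotopy long exact sequence handles the base case.

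For the inductive step, let $\uk' = (k_1, \ldots, k_{m-1})$, $\ur' = (r_1, \ldots, r_{m-1})$, and $B' = B_{<m}$. The restriction of the data to indices $i < m$ gives a forgetful map $\pi: X(\uk,\ur) \to X(\uk',\ur')$, and the target satisfies the conclusion by induction. I will show that $\pi$ is a Zariski-locally trivial fibration whose fiber $F$ is smooth, connected, simply connected, and has even and free cohomology; the Serre spectral sequence and homotopy LES will then finish the proof.

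To describe $F$, fix a point of $X(\uk',\ur')$ with last subspace $H'_{m-1} \subset B'$ of dimension $d = k_1 + \cdots + k_{m-1}$, and let $p_m : B = B' \oplus B_m \to B_m$ be the projection. The conditions $\pr_m P_m \subset \tilde P_m$ and $\pr_m H_m \subset \tilde P_m^\perp$ translate to $P_m \subset B' \oplus \tilde P_m$ and $H_m \subset B' \oplus \tilde P_m^\perp$, so $F$ admits a three-step tower: (i) choose $\tilde P_m$ in $\Gr_{k_m - r_m}(B_m)^\iso$ or a specified component; (ii) given $\tilde P_m$, choose $P_m / H'_{m-1}$ in the Grassmannian $\Gr_{k_m - r_m}\bigl((B' \oplus \tilde P_m)/H'_{m-1}\bigr)$; (iii) given $(\tilde P_m, P_m)$, choose $H_m/P_m$ in $\Gr_{r_m}\bigl((B' \oplus \tilde P_m^\perp)/P_m\bigr)$. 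The base in (i) has the required properties by Lemma~\ref{lem:griso-conn} and Corollary~\ref{cor:gr-oddvan}, and the ordinary Grassmannian fibers in (ii) and (iii) do as well; iterating Serre gives the conclusion for $F$.

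The main obstacle is verifying that this tower globalizes to a Zariski-locally trivial fibration over all of $X(\uk',\ur')$, rather than merely computing individual fibers. For this one must show that $H'_{m-1}$ and $\tilde P_m$ extend to tautological subbundles on the appropriate bases, so that each Grassmannian in (i)--(iii) becomes the relative Grassmannian of an algebraic vector bundle of constant rank; one must check that the numerical constraints $\max\{0, 2k_m - n_m\} \le r_m \le k_m$ prevent any rank degeneration. Granting this, relative Grassmannians of algebraic vector bundles are Zariski-locally trivial, and the induction closes.
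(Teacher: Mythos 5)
Your proof is correct and uses the same key mechanism as the paper's: realizing $X(\uk,\ur)$ as an iterated Grassmannian bundle whose bottom layers are isotropic Grassmannians (handled by Lemma~\ref{lem:griso-conn} and Corollary~\ref{cor:gr-oddvan}), then propagating smoothness, simple connectivity, and even-and-free cohomology up the tower via the homotopy long exact sequence and the Serre spectral sequence. The only difference is organizational: the paper fixes all the $\tilde P_i$ at once as the base $Y_1$ and then adjoins $H_1, P_2, H_2, \ldots$ one at a time, whereas you induct on $m$ and adjoin $(\tilde P_m, P_m, H_m)$ as a block at each stage---the individual Grassmannian-bundle steps are the same in both decompositions.
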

\begin{proof}
Let $j$ be an integer with $0 \le j \le m$.  We define varieties $Y_j$ and $Y'_j$ as follows: a point is a collection of vector spaces as indicated below, subject to the same conditions as those in the definition of $X(\uk,\ur)$.
\[
\begin{array}{llcll}
\multicolumn{2}{c}{\text{\textit{For $Y_j$}}} &&
\multicolumn{2}{c}{\text{\textit{For $Y'_j$}}} \\
\cline{1-2}\cline{4-5}
\tilde P_i^{\strut} & \text{for $1 \le i \le m$} &&
\tilde P_i & \text{for $1 \le i \le m$}  \\
P_i & \text{for $1 \le i \le j$} &&
P_i, H_i & \text{for $1 \le i \le j$} \\
H_i & \text{for $1 \le i < j$} 
\end{array}
\]
There is an obvious sequence of maps
\begin{equation}\label{eqn:xsm2}
X(\uk,\ur) = Y'_m \to Y_m \to Y'_{m-1} \to \cdots \to Y'_1 \to Y_1,
\end{equation}
where each map forgets one subspace from the data.  

Let us describe the fibers of the maps in~\eqref{eqn:xsm2}.  Let $1 \le j \le m$, and fix a point in $Y_j$.  The fiber of $Y'_j \to Y_j$ over the given point is the variety of subspaces $H_j$ such that
\[
P_j \subset H_j  \subset B_{<j}\oplus \tilde P_j^\perp 
\qquad\text{and}\qquad
\dim H_j = k_1 + \cdots + k_j.
\]
The space of choices for $H_j$ is isomorphic to the Grassmannian
\[
\textstyle\Gr_{r_j}((B_{<j} \oplus \tilde P_j^\perp)/P_j),
\]
and we conclude that $Y'_j \to Y_j$ is a Grassmannian bundle, i.e., a locally trivial fiber bundle whose fibers are Grassmannians.  Similar arguments show that $Y_{j+1} \to Y'_j$ is also a Grassmannian bundle.

For each variety $Y$ appearing in~\eqref{eqn:xsm2}, we will prove the following claims:
\begin{enumerate}
\item $Y$ is smooth, connected, and simply-connected.
\item We have $\coh^\bullet(Y,\bk)$ is even and free.
\end{enumerate}
We proceed by induction, starting with $Y_1$.  Since $P_1 = \tilde P_1$, and since each $\tilde P_i$ is isotropic, we may identify
\begin{equation}\label{eqn:xsm1}
Y_1 = Y_{11} \times \cdots \times Y_{im}
\quad\text{where}\quad
Y_{1i} =
\begin{cases}
\cC_{k_i,r_i}(B_i) & \text{if $r_i = 0'$ or $0''$,} \\
\Gr_{k_i - r_i}(B_i)^\iso & \text{otherwise.}
\end{cases}
\end{equation}
Recall that when $r_i = 0'$ or $0''$, the variety $\cC_{k_i,r_i}(B)$ is a connected component of $\Gr_{k_i}(B_i)^\iso$.  Using Lemma~\ref{lem:griso-conn}, we see that each $Y_{1i}$ is smooth, connected, and simply-connected, and by Corollary~\ref{cor:gr-oddvan}, $\coh^\bullet(Y_{1i},\bk) = 0$ is even and free.  We deduce that $Y_1$ has the desired properties as well.

For the inductive step, let $Y' \to Y$ be a map in~\eqref{eqn:xsm2}, and assume that the claims are known for $Y$.  We have seen that $Y' \to Y$ is a Grassmannian bundle.  It follows immediately that $Y'$ is smooth, connected, and simply-connected.  Since $Y$ is simply-connected, the Serre spectral sequence shows that $\coh^\bullet(Y') = 0$ is even and free.
\end{proof}

\begin{prop}\label{prop:xkr-image}
The image of $\mu: X(\uk,\ur) \to \Gr_k(B)$ is $\overline{\cO_{\uk,\ur}}$ and the restriction $\mu|_{\mu^{-1}(\cO_{\uk,\ur})}: \mu^{-1}(\cO_{\uk,\ur} )\to \cO_{\uk,\ur}$ is an isomorphism of varieties.  As a consequence,
\[
\mu: X(\uk,\ur) \to \overline{\cO_{\uk,\ur}}
\]
is a resolution of singularities.
\end{prop}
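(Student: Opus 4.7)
The plan is to construct an explicit morphism $\sigma \colon \cO_{\uk,\ur} \to X(\uk,\ur)$ that is a section of $\mu$ over the open orbit, verify that $\mu|_{\mu^{-1}(\cO_{\uk,\ur})}$ and $\sigma$ are mutually inverse, and then invoke properness to conclude that the image of $\mu$ is $\overline{\cO_{\uk,\ur}}$. First I would define $\sigma$ by
\[
\sigma(H) = \bigl(\tilde P_i(H),\ P_i(H),\ H_i(H)\bigr)_{1 \le i \le m},
\]
where $H_i(H) = H \cap B_{\le i}$, $\tilde P_i(H) = \pr_i H \cap (\pr_i H)^\perp$ is the radical of the restricted form on $\pr_i H$, and $P_i(H)$ is the preimage of $\tilde P_i(H)$ under the canonical identification $H_i(H)/H_{i-1}(H) \simto \pr_i H$. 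Because $H \in \cO_{\uk,\ur}$, the dimensions $\dim H \cap B_{\le i} = k_1 + \cdots + k_i$ and $\dim \pr_i H \cap (\pr_i H)^\perp = k_i - r_i$ are constant in $H$, so $\sigma$ is a well-defined morphism. Each defining condition of $X(\uk,\ur)$ can then be checked directly; for instance, $\tilde P_i \subseteq \pr_i H \subseteq \tilde P_i^\perp$ gives both $\tilde P_i \subseteq \tilde P_i^\perp$ and $\pr_i H_i \subseteq \tilde P_i^\perp$.

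Next I would show that $\sigma$ is the two-sided inverse of $\mu|_{\mu^{-1}(\cO_{\uk,\ur})}$. The identity $\mu \circ \sigma = \id$ is immediate. For the other direction, fix $x = (\tilde P_\bullet, P_\bullet, H_\bullet) \in \mu^{-1}(\cO_{\uk,\ur})$ and set $H = H_m$. The containment $H_i \subseteq H \cap B_{\le i}$ together with the dimension count forces $H_i = H \cap B_{\le i}$. Similarly $P_i \subseteq H_i$ forces $P_i \cap B_{<i} \subseteq H \cap B_{<i} = H_{i-1}$, and combined with the reverse inclusion from $H_{i-1} \subseteq P_i$, we get $P_i \cap B_{<i} = H_{i-1}$; hence $\dim \pr_i P_i = k_i - r_i = \dim \tilde P_i$, so $\tilde P_i = \pr_i P_i$ as subspaces of $B_i$. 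In particular $\tilde P_i \subseteq \pr_i H$, and combining with $\pr_i H \subseteq \tilde P_i^\perp$ shows $\tilde P_i$ lies in the radical of the form on $\pr_i H$; a final dimension count identifies $\tilde P_i$ with the full radical, pinning $x$ down as $\sigma(H)$.

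To finish, I would use that $X(\uk,\ur)$ is smooth and connected by Lemma~\ref{lem:xkr-smooth}, hence irreducible, and sits as a closed subvariety of a product of Grassmannians, so $\mu$ is proper. The nonempty open subset $\mu^{-1}(\cO_{\uk,\ur})$ is therefore dense in $X(\uk,\ur)$, and because $\mu$ is closed,
\[
\mu(X(\uk,\ur)) = \mu\bigl(\overline{\mu^{-1}(\cO_{\uk,\ur})}\bigr) = \overline{\mu(\mu^{-1}(\cO_{\uk,\ur}))} = \overline{\cO_{\uk,\ur}}.
\]
Combined with the isomorphism $\mu^{-1}(\cO_{\uk,\ur}) \simto \cO_{\uk,\ur}$ from the previous step and the smoothness of $X(\uk,\ur)$, this exhibits $\mu$ as a resolution of singularities.

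The main technical point is the uniqueness argument in the second paragraph: the condition $\tilde P_i \subseteq \pr_i H$ is \emph{not} part of the definition of $X(\uk,\ur)$ (and indeed would fail on deeper strata of $\overline{\cO_{\uk,\ur}}$), but becomes forced once $H \in \cO_{\uk,\ur}$ through the pinching $H_{i-1} \subseteq P_i \cap B_{<i} \subseteq H \cap B_{<i} = H_{i-1}$. Everything else is routine dimension bookkeeping.
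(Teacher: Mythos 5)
Your proposal takes essentially the same route as the paper's proof: an explicit section $\sigma$ over the open orbit, a careful mutual-inverse verification (spelled out in more detail than the paper's own text, which largely just asserts that the data are determined), and properness. The one step you leave unjustified is the assertion that $\mu^{-1}(\cO_{\uk,\ur})$ is \emph{open} in $X(\uk,\ur)$, which you need in order to conclude density. Since $\cO_{\uk,\ur}$ is only locally closed in $\Gr_k(B)$, this openness is not automatic: it would follow from the containment $\mu(X(\uk,\ur)) \subseteq \overline{\cO_{\uk,\ur}}$, but that containment is part of what is being proved, so one cannot invoke it at this stage without a separate argument.

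The gap is small and closable by the same dimension bookkeeping you already use. The defining containments $H_{i-1} \subseteq P_i \subseteq H_i$, $\pr_i P_i \subseteq \tilde P_i$, and $\pr_i H_i \subseteq \tilde P_i^\perp$ of $X(\uk,\ur)$ force $\dim \pr_i P_i \le k_i - r_i$ and $\dim \pr_i H_i \le k_i$ at every point, so the locus $U$ where all of these are equalities is open. On $U$ one gets $H_i = H_m \cap B_{\le i}$ and $\tilde P_i = \pr_i P_i \subseteq \rad(\pr_i H_m)$, so within $U$ the preimage $\mu^{-1}(\cO_{\uk,\ur})$ is cut out by the open condition that the form restricted to $\pr_i H_m$ have rank $\ge r_i$ for each $i$; hence $\mu^{-1}(\cO_{\uk,\ur})$ is open in $X(\uk,\ur)$. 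The paper instead sidesteps density by noting that the image is closed, irreducible, and $Q_B$-stable, hence the closure of a single $Q_B$-orbit, and then combining the containment $\mu(X(\uk,\ur)) \subseteq \overline{\cO_{\uk,\ur}}$ (obtained from the same inequalities) with the inclusion $\cO_{\uk,\ur} \subseteq \mu(X(\uk,\ur))$ that your section provides. Either fix works; once it is in place, the rest of your argument is sound.
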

\begin{proof}
For $x \in X(\uk,\ur)$, we have
\begin{align*}
\dim \pr_i P_i(x) &\le k_i - r_i, \\
\dim \pr_i H_i(x) &=\dim H_i/{B_{<i}\cap H_i}\le \dim H_i/P_i  \le k_i.
\end{align*}
Let $U$ be the open subset of $X(\uk,\ur)$ where these inequalities are equalities for all $i$.  The definitions imply that for $x \in U$, we have
\[
\pr_i P_i = \tilde P_i
\qquad
\text{and}\qquad
\tilde P_i \subset \pr_i H_i \subset (\tilde P_i)^\perp.
\]
In other words, $\tilde P_i$ is contained in the radical of $\pr_i H_i$. Therefore the first assertion of the proposition is true.

Since $\mu$ is proper and $Q_B$-equivariant, its image is closed, irreducible, and stable under $Q_B$, and is therefore the closure of a single $Q_B$-orbit.

Let $H\in \cO_{\uk,\ur}$. We can define $H_m=H$. Clearly $\dim \pr_i H\cap \pr_i H^\perp= k_i-r_i$. So we define $\tilde P_i= \pr_i H\cap \pr_i H^\perp$ which has dimension $k_i-r_i $ and an isotropic subspace. Define $H_i=H\cap B_{\leq i}$. Then $\dim H_1=\dim H\cap B_{\leq 1}=\dim \pr_1 H= k_1$. Now we prove $\dim H_i=k_1+\dots +k_i$ by induction. Let the statement is true for $H_{i-1}$. Then $\dim H_i=\dim \pr_i H + \dim H\cap B_{\leq i-1}=\dim  \pr_i H + \dim H_{i-1}=k_i+k_{i-1}+\dots +k_1 $. Also $\pr_i H_i=\pr_i H \subset \pr_i H +\pr_i H ^\perp= \tilde P_i^\perp$. We have the projection map $\pi: H_i\to \pr_i H$, where $\pr_i H$ contains $\tilde P_i$. We define $P_i$ to be the pre-image of $\tilde P_i$ under the map $\pi$. Then $\dim P_i=\dim \tilde P_i +\ker \pi=k_i-r_i+ \dim H\cap B_{<i}=k_i-r_i+k_{i-1}+\dots+k_1$. Also $\ker \pi= H_{i-1}\subset P_i$. The $i$-th projection, $\pr_i P_i \subset \tilde P_i$. So it is clear that the choice of $H\in \cO_{\uk,\ur}$ determines the element in $X_{\uk,\ur}$, hence $\mu|_{\mu^{-1}(\cO_{\uk,\ur})}: \mu^{-1}(\cO_{\uk,\ur} )\to \cO_{\uk,\ur}$ is a bijection.  Since a bijective morphism of smooth complex varieties is an isomorphism, we are done.
\end{proof}

\begin{rmk}\label{rmk:xkr-inverse}
From the preceding proof, we can extract a description of the inverse isomorphism $\cO_{\uk,\ur} \to \mu^{-1}(\cO_{\uk,\ur})$: it sends $H \in \cO_{\uk,\ur}$ to the collection $( \tilde P_i, P_i, H_i : 1 \le i \le m)$ with
\[
\tilde P_i = \rad (\pr_i H), 
\qquad P_i = \text{preimage of $\tilde P_i$ in $H \cap B_{\le i}$,}
\qquad H_i = H \cap B_{\le i}.
\]
\end{rmk}

The remainder of this section is devoted to the study of the fibers of $\mu$.  To do this, we introduce another variety $F(\uk,\ur)$ as follows.  A point of $F(\uk,\ur)$ is a collection of vector spaces 
\[
P_i, H_i \subset B_{\le i}  \qquad\text{for $1 \le i \le m$}
,\]
whose dimensions are as in the definition of $X(\uk,\ur)$, and that satisfy the following containment relations:
\begin{equation}\label{eqn:fkr-defn}
H_{i-1} \subset P_i \subset H_i \subset B_{\le i}
\qquad\text{and}\qquad
\pr_i P_i, \pr_i H_i \subset (\pr_i P_i)^\perp.
\end{equation}

We claim that there is an obvious map
\[
X(\uk,\ur) \to F(\uk,\ur)
\]
given by forgetting the $\tilde P_i$'s.  For this to make sense, we must check that points of $X(\uk,\ur)$ satisfy the additional condition~\eqref{eqn:fkr-defn}.  This condition is easily deduced from the fact that $\pr_i P_i \subset \tilde P_i\subset \tilde P_i^\perp$. This implies $ \pr_i P_i \subset \tilde P_i^\perp \subset (\pr_i P_i)^\perp $.
Also we have  $\pr_i H_i \subset \tilde P_i^\perp \subset (\pr_i P_i)^\perp $.  
There is an obvious map
\[
\mu_F: F(\uk,\ur) \to \Gr_k(B).
\]

\begin{lem}\label{lem:fkr-pave}
Let $M \in \Gr_k(B)$.  The variety $\mu_F^{-1}(M)$ admits an affine paving
\[
\mu_F^{-1}(M) = \bigsqcup_{\alpha \in I} S_\alpha
\]
such that the following functions are constant on each piece $S_\alpha$ of the paving:
\[
\dim P_i \cap B_{<i},\qquad  \dim H_i \cap B_{<i},\qquad \dim P_i \cap (\rad \pr_i M  + B_{<i}).
\]
\end{lem}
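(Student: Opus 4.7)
The plan is to induct on $m$, using the layered structure of $F(\uk,\ur)$. For the base case $m=1$, the defining conditions in~\eqref{eqn:fkr-defn} collapse to $P_1 \subset M \cap M^\perp = \rad M$, so $\mu_F^{-1}(M) \cong \Gr_{k_1 - r_1}(\rad M)$. This admits an affine paving by Lemma~\ref{lem:gr-affpave} (applied with the zero form on $\rad M$), and the three listed functions are trivially constant since $B_{<1} = 0$ and $P_1 \subset \rad M \subset \rad \pr_1 M$.

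For the inductive step, I would decompose a point of $\mu_F^{-1}(M)$ into a ``last layer'' $(H_{m-1}, P_m)$ together with an ``earlier layers'' datum $(P_1, H_1, \ldots, P_{m-1})$, which constitutes a point of the analogous fiber for the truncations $\uk' = (k_1, \ldots, k_{m-1})$ and $\ur' = (r_1, \ldots, r_{m-1})$, mapping to $H_{m-1}$. This gives a surjection $\pi \colon \mu_F^{-1}(M) \to Z$ onto the space $Z$ of pairs $(H_{m-1}, P_m)$ satisfying the appropriate conditions, with fiber over $(H_{m-1}, P_m)$ equal to the analogous $\mu_F^{-1}(H_{m-1})$ for $(\uk',\ur')$. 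The key geometric observation is that the isotropy conditions force $\pr_m P_m \subset \rad(\pr_m M)$: indeed $\pr_m P_m \subset \pr_m M$ (since $P_m \subset M$), and the condition $\pr_m M \subset (\pr_m P_m)^\perp$ gives $\pr_m P_m \subset (\pr_m M)^\perp$. Thus the data of $P_m$ decomposes as a choice of the isotropic subspace $\pr_m P_m \subset \rad(\pr_m M)$, the subspace $P_m \cap B_{<m} \subset M \cap B_{<m}$, and affine lifting data; then $H_{m-1}$ is a further subspace inside $P_m \cap B_{<m}$. Applying Lemma~\ref{lem:gr-pave} with $V = \pr_m M$ and $M_1 = \rad(\pr_m M)$ produces an affine paving of $Z$ on which $\dim P_m \cap B_{<m}$ and $\dim P_m \cap (\rad \pr_m M + B_{<m})$ are constant, while the choice of $H_{m-1}$ inside $P_m \cap B_{<m}$ is paved using Lemma~\ref{lem:gr-affpave}.

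Finally I would take preimages of strata of $Z$ and apply the inductive hypothesis to pave the fibers, so that the three constancy conditions for $i < m$ are inherited. The main obstacle is ensuring that the inductive paving of $\mu_F^{-1}(H_{m-1})$ (for the smaller problem) varies locally trivially as $(H_{m-1}, P_m)$ moves within a stratum of $Z$. Since the fiber pavings from the inductive hypothesis are governed by intersection dimensions with the flag $\{B_{<i}\}$ and with the isotropic subspaces $\rad \pr_i M$ (viewed inside each $B_i$), and since these dimensions are held constant on strata of $Z$, one expects a canonical gluing. Making this rigorous requires strengthening the inductive statement so that the paving is controlled by an arbitrary prescribed chain of isotropic subspaces at each level, which is exactly what Lemma~\ref{lem:gr-pave} is designed to support.
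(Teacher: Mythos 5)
Your high-level idea is sound—peel off layers and reduce to smaller fibers via Lemma~\ref{lem:gr-pave}—but as written there is a genuine gap, and the paper's proof avoids it by going in the opposite direction.

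The gap appears in the inductive step. When you view the fiber of $\pi: \mu_F^{-1}(M) \to Z$ over $(H_{m-1}, P_m)$ as $\mu_F^{-1}(H_{m-1})$ for the truncated data $(\uk', \ur')$, the inductive hypothesis (as you state the lemma) hands you a paving on which $\dim P_i \cap (\rad \pr_i H_{m-1} + B_{<i})$ is constant, where the radical is computed \emph{relative to $H_{m-1}$}. But what you need for the lemma is constancy of $\dim P_i \cap (\rad \pr_i M + B_{<i})$, with the radical computed \emph{relative to $M$}. Since $\pr_i H_{m-1} \hookrightarrow \pr_i M$ is in general a proper inclusion, $\rad \pr_i H_{m-1}$ and $\rad \pr_i M$ are unrelated as subspaces of $B_i$, and the inductive hypothesis simply does not control the quantity you need. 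Your sentence claiming the fiber pavings are ``governed by intersection dimensions with \ldots $\rad \pr_i M$'' is exactly where this is elided; the unstrengthened inductive statement gives you $\rad \pr_i H_{m-1}$, not $\rad \pr_i M$. You do acknowledge at the end that a strengthened inductive statement (with a prescribed chain of isotropic subspaces built into the hypotheses) is needed, but you neither formulate it nor verify that $\rad \pr_i M$ meets the isotropy requirements needed to feed it into Lemma~\ref{lem:gr-pave} in each $B_i$. A second, related gap is the local triviality of the fiber paving over a stratum of $Z$: ``one expects a canonical gluing'' is not an argument, and this is precisely the nontrivial content that Lemma~\ref{lem:gr-pave} is built to supply, layer by layer, rather than all at once after the fact.

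The paper sidesteps both problems by building the flag $(P_1, H_1, P_2, H_2, \ldots, P_m, H_m)$ from the \emph{bottom up}, one subspace at a time, through a tower $F_1 \to F'_1 \to F_2 \to \cdots \to F'_m = \mu_F^{-1}(M)$. Because $M$ stays fixed for the entire induction, the reference subspaces $\rad \pr_i M + B_{<i}$ never change, and at each stage Lemma~\ref{lem:gr-pave} is applied to a single degenerate form on $B_{\le j}$ (with $B_{<j}$ in its radical) to produce a bundle of affine pavings over the previous layer with all the required dimension functions constant. This is why the paper needs the full strength of Lemma~\ref{lem:gr-pave} (the ``bundle'' statement, not just Lemma~\ref{lem:gr-affpave}) and why the induction is on $2m$ steps of adding a subspace rather than on $m$. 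If you want to rescue the top-down approach, you would essentially end up reproving the bottom-up argument inside each fiber, with a strengthened inductive statement that carries an auxiliary flag of isotropic subspaces — so there is no real economy, and the bottom-up route is the cleaner way to go.
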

In this statement, ``$\rad \pr_i M$'' is the radical of the restriction of the bilinear form $\lag{-},{-}\rag_{B_i}$ to $\pr_i M$.  In other words, $\rad \pr_i M = \pr_i M \cap (\pr_i M)^\perp$.
\begin{proof}
Let $1 \le j \le m$.  Define varieties $F_j$ and $F_j'$ as follows: a point is a collection of subspaces as indicated below, subject to the same conditions as in the definition of $F(\uk,\ur)$, along with the additional conditions indicated below.
\[
\begin{array}{llcll}
\multicolumn{2}{c}{\text{\textit{For $F_j$}}} &&
\multicolumn{2}{c}{\text{\textit{For $F'_j$}}} \\
\cline{1-2}\cline{4-5}
P_i & \text{for $1 \le i \le j$,} &&
P_i, H_i & \text{for $1 \le i \le j$,} \\
H_i & \text{for $1 \le i < j$}
\vspace{5pt} \\
P_j \subset M \cap B_{\le j} &&&
H_j \subset M \cap B_{\le j} \\
\pr_i M \subset (\pr_i P_i)^\perp & \text{for $1 \le i \le j$} &&
\pr_i M \subset (\pr_i P_i)^\perp & \text{for $1 \le i \le j$}
\end{array}
\]
We have a sequence of maps
\[
\mu_F^{-1}(M) = F'_m \to F_m \to F'_{m-1} \to \cdots \to F_2 \to F'_1  \to F_1.
\]
We will show that every variety in this sequence admits an affine paving with the property that the functions mentioned in the statement of the lemma are all constant on each piece of the paving.  The proof is by induction along the sequence of maps above.

Let us prove the claim for $F_j$, assuming that $F'_{j-1}$ has an appropriate affine paving.  (This paragraph also covers the start of the induction: interpret $F'_0$ to mean a singleton, and take $P_0 = H_0 = 0$.)  Consider the bilinear form $({-},{-})_{B_{\le j}}$ on $B_{\le j}$ defined by declaring $B_{<j}$ to be in its radical, and by declaring its restriction to $B_j$ to coincide with $\lag{-},{-}\rag_{B_j}$.  Then the condition $\pr_j P_j$ be isotropic with respect to $\lag{-},{-}\rag_{B_j}$ is equivalent to requiring $P_j$ to be isotropic with respect to $({-},{-})_{B_{\le j}}$.  Fix a stratum $S$ in $F'_{j-1}$.   Each point $s \in S$ includes, as part of its data, a subspace $H_{j-1}(s) \subset B_{<j}$.  The quotient $B_{\le j}/H_{j-1}(s)$ has a bilinear form induced by $({-},{-})_{B_{\le j}}$, and 
\[
F_j \times_{F'_{j-1}} S = \left\{ (s, P_j) \,\Big|\, 
\begin{array}{@{}c@{}}
\text{$s \in S$, $H_{j-1}(s) \subset P_j \subset M \cap B_{\le j}$, and} \\
\text{$P_j$ is isotropic with respect to $({-},{-})_{B_{\le j}}$}
\end{array} \right\}.
\] which is same as,
\[\{(s,\bar{P_j})\mid \bar{P_j}\subset M\cap B_{\leq j}/H_{j-1}(s)\}. \] Now $M\cap B_{<j}/H_{j-1}(s)\subset \rad \pr_j M+B_{<j}/H_{j-1}(s)$ is a sequence of isotropic subspaces in $ M\cap B_{\leq j}/H_{j-1}(s) $.
Therefore Lemma~\ref{lem:gr-pave} implies that $F_j \times_{F''_{j-1}} S \to S$ admits a bundle of affine pavings such that the functions 
\[
\dim P_j \cap B_{<j}, \qquad \dim P_j \cap (\rad \pr_j M + B_{<j})
\]
are constant on strata.  It follows that $F_j$ itself admits an affine paving such that the desired functions are constant on strata.

Next, we prove the claim for $F'_j$, assuming that $F_j$ has an appropriate affine paving.  Fix a stratum $S$ in $F_j$.  Each point $s \in S$ determines a subspace $P_j(s) \subset B_{\le j}$.  We have
\[
F'_j \times_{F_j} S = \{ (s, H_j) \mid 
\text{$s \in S$ and $P_j(s) \subset H_j \subset M \cap B_{\le j}$} \}.
\]
Similar reasoning shows that $F'_j \times_{F_j} S \to S$ admits a bundle of affine pavings such that $\dim H_j \cap B_{<i}$ is constant on strata, so $F'_j$ admits an affine paving with the desired properties as well.
\end{proof}

\begin{lem}\label{lem:xkr-affpave}
Let $M \in \Gr_k(B)$.  The variety $\mu^{-1}(M)$ admits an affine paving
\[
\mu^{-1}(M) = \bigsqcup_{\alpha \in I} S_\alpha
\]
such that the following functions are constant on each piece $S_\alpha$ of the paving:
\[
\dim P_i \cap B_{<i},\qquad  \dim H_i \cap B_{<i},\qquad \dim P_i \cap (\rad \pr_i M  + B_{<i}).
\]
\end{lem}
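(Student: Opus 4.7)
The plan is to follow the inductive template of Lemma~\ref{lem:fkr-pave}, but insert an extra stage in the induction that adjoins $\tilde P_j$ after both $P_j$ and $H_j$ have been introduced. Concretely, in addition to the varieties $F_j$ and $F_j'$ of that proof, define $F_j''$ to parametrize the data of $F_j'$ together with $\tilde P_j \subset B_j$ satisfying the defining conditions
\[
\pr_j P_j \subset \tilde P_j \subset (\pr_j H_j)^\perp, \qquad \tilde P_j \subset \tilde P_j^\perp, \qquad \dim \tilde P_j = k_j - r_j,
\]
plus the component constraint when $r_j \in \{0', 0''\}$, and all $\tilde P_i$'s for $i < j$ inherited from $F_{j-1}''$. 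Setting $F_0'' := \{\pt\}$, this extends the old sequence to a tower
\[
\mu^{-1}(M) = F_m'' \to F_m' \to F_m \to F_{m-1}'' \to \cdots \to F_1'' \to F_1' \to F_1 \to F_0'',
\]
and I would inductively produce, for each variety in the tower, an affine paving whose pieces are constant in the listed dimension functions.

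The maps $F_j \to F_{j-1}''$ and $F_j' \to F_j$ are handled verbatim as in the proof of Lemma~\ref{lem:fkr-pave}, since the already-accumulated $\tilde P_i$ data for $i < j$ do not enter into any of the constraints imposed at those stages. For the new map $F_j'' \to F_j'$, let $S$ be a stratum in the paving of $F_j'$ built by induction. On $S$, both $\dim \pr_j P_j$ and $\dim \pr_j H_j$ are constant, being determined by the constant functions $\dim P_j \cap B_{<j}$ and $\dim H_j \cap B_{<j}$. Consequently the fiber of $F_j'' \to F_j'$ over every point of $S$ is isomorphic to the same variety, namely the (possibly restricted) isotropic Grassmannian
\[
\Gr_{k_j - r_j - \dim \pr_j P_j}\bigl((\pr_j H_j)^\perp / \pr_j P_j\bigr)^\iso.
\]
I would then apply Lemma~\ref{lem:gr-pave} relatively over $S$, using the flag of isotropic subspaces of $B_j$ generated by $\pr_j P_j$, $\rad \pr_j M$, and $\pr_j M$, to refine $F_j'' \times_{F_j'} S$ into locally trivial affine-space bundles over $S$ on which $\dim P_j \cap (\rad \pr_j M + B_{<j})$ is constant.

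The principal obstacle is this final relative application of Lemma~\ref{lem:gr-pave}: one needs not just a stratification by affine-space bundles but a genuine affine paving, and one must correctly handle the case $r_j \in \{0', 0''\}$ where only one connected component of the fiber is permitted. Because $S$ is itself an affine space, any affine-space bundle over $S$ admits a section and hence is a vector bundle, so by Quillen--Suslin it is trivial and the bundle-style paving of $F_j'' \times_{F_j'} S$ upgrades to an affine paving; the component restriction is handled using Lemma~\ref{lem:griso-conn}.
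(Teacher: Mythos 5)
Your proposal is correct in spirit and lands on the same key ingredients as the paper (the tower induction from Lemma~\ref{lem:fkr-pave}, the observation that $\dim\pr_j P_j$ and $\dim\pr_j H_j$ are constant on each stratum, and affine pavings of isotropic Grassmannian fibers), but the decomposition you choose is genuinely different. You interleave the $\tilde P_j$ into the tower, adding $F_j''$ after $F_j'$; the paper instead treats Lemma~\ref{lem:fkr-pave} as a black box, takes a single stratum $S \subset \mu_F^{-1}(M)$, and adjoins \emph{all} the $\tilde P_i$'s at once, identifying $X(\uk,\ur)\times_{F(\uk,\ur)} S$ as a bundle over $S$ whose fibers are products of (restricted) isotropic Grassmannians, whose dimensions are constant on $S$ precisely because the tracked dimension functions are. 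The paper's route is shorter and more modular; yours is more uniform in that every forgetful map in the extended tower is handled by the same fibration argument.

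Two small corrections to your write-up, neither fatal. First, the function $\dim P_j \cap (\rad\pr_j M + B_{<j})$ depends only on $P_j$, which is already part of the $F_j'$-data; it is therefore automatically constant on the fibers of $F_j''\to F_j'$, and there is nothing new to control at that step. You do not need Lemma~\ref{lem:gr-pave} there; you only need that the isotropic Grassmannian fiber itself admits an affine paving, which is Lemma~\ref{lem:gr-affpave} (together with Lemma~\ref{lem:griso-conn} when $r_j\in\{0',0''\}$ to select a single component). Second, $\pr_j M$ is generally not isotropic, so the phrase ``flag of isotropic subspaces generated by $\pr_j P_j$, $\rad\pr_j M$, and $\pr_j M$'' should drop $\pr_j M$ (only $\rad\pr_j M = \pr_j M\cap(\pr_j M)^\perp$ is isotropic). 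With those adjustments the argument goes through and proves the lemma.
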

\begin{proof}
Let $S$ be a piece of the affine paving of $\mu_F^{-1}(M) \subset F(\uk,\ur)$ from Lemma~\ref{lem:fkr-pave}.  It is enough to show that
\[
X(\uk,\ur) \times_{F(\uk,\ur)} S
\]
admits an affine paving.  For $s \in S$, we have a collection of spaces $P_i(s), H_i(s) \subset B$ satisfying~\eqref{eqn:fkr-defn}, and with $H_m(s) = M$.  For $1 \le i \le m$, let $Y_{1i}$ be as in~\eqref{eqn:xsm1}.  We have an identification
\[
X(\uk,\ur) \times_{F(\uk,\ur)} S
=
\left\{ (s, \tilde P_1, \ldots, \tilde P_m) \,\Big|\, 
\begin{array}{@{}c@{}}
\text{$s \in S$, $\tilde P_i \in Y_{1i}$, and} \\
\text{$\pr_i P_i(s) \subset \tilde P_i \subset B_i, 1\leq i\leq m-1$}, \\\text{$\pr_m P_m(s)\subset \tilde P_m\subset (\pr_m M)^\perp$}\end{array}\right\}.
\]
But this is same as,
\[\Gr_{a_1}((\pr_1M)^\perp/\pr_1P_1(s))\times \dots \times \Gr_{a_m}((\pr_mM)^\perp/\pr_mP_m(s)),\] where $a_i$ is $\dim \tilde{P}_i-\dim \pr_i P_i$. The last term is independent of the choice of $s\in S$ as, $\dim \pr_i P_i=\dim P_i-\dim P_i\cap B_{<i}$ and $\dim P_i\cap B_{<i}$ is constant on $S$ by Lemma \ref{lem:fkr-pave}. Also as $X(\uk,\ur) \times_{F(\uk,\ur)} S $ is a product of Grassmannians, it admits an affine paving.
\end{proof}

\section{Compactified \texorpdfstring{$\pi_1$}{pi1}-trivial fibrations}
\label{sec:locsys2}

In the previous section, we generalized Proposition~\ref{prop:m1-resoln}.  In this section, we generalize Proposition~\ref{prop:fund-ses}: we will construct a proper map
\[
\hmu: \hX(\uk,\ur) \to \overline{\cO_{\uk,\ur}}
\]
whose domain is smooth, and whose restriction to $\pi^{-1}(\cO_{\uk,\ur})$ is a ``$\pi_1$-injective fibration of index $2$'' as defined in Section~\ref{sec:resoln}.

We define $\hX(\uk,\ur)$ as follows: a point of $\hX(\uk,\ur)$ consists of a point of $X(\uk,\ur)$, together with the following additional data:
\begin{itemize}
\item For each $i$ such that $\lag{-},{-}\rag_{B_i}$ is symmetric, $r_i > \max \{0, 2k_i - n_i\}$, and $r_i$ is even, subspaces
\[
Q_i \subset B
\qquad\text{and}\qquad
\tilde Q_i \subset B_i
\]
satisfying the following conditions:
\begin{align*}
\dim \tilde Q_i &= k_i - r_i/2, & \tilde P_i &\subset \tilde Q_i \subset \tilde Q_i^\perp \\
\dim Q_i &= k_1 + \cdots + k_{i-1} + k_i - r_i/2 & \pr_i Q_i & \subset  \tilde Q_i, \\
&& P_i &\subset Q_i \subset H_i
\end{align*}
\item For each $i$ such that $\lag{-},{-}\rag_{B_i}$ is symmetric, $r_i > \max \{0, 2k_i - n_i\}$, and $r_i$ is odd, subspaces
\[
Q_i \subset B \oplus \C
\qquad\text{and}\qquad
\tilde Q_i \subset B_i \oplus \C
\]
satisfying the following conditions:
\begin{align*}
\dim \tilde Q_i &= \textstyle k_i - \frac{r_i-1}{2}, & \tilde P_i &\subset \tilde Q_i \subset (\tilde Q_i)_+^\perp \\
\dim Q_i &= \textstyle k_1 + \cdots + k_{i-1} + k_i - \frac{r_i-1}{2} & &\pr_iQ_i   \subset  \tilde Q_i, \\
&& P_i &\subset Q_i \subset H_i \oplus \C
\end{align*}
(Here, in a slight abuse of notation, ``$\pr_i Q_i$'' should be understood as $Q_i / (Q_i \cap B_{<i}) \subset B_i \oplus \C$.)
\end{itemize}
There is an obvious map
\[
\hX(\uk,\ur) \to X(\uk,\ur)
\]
given by forgetting the $Q_i$'s and the $\tilde Q_i$'s.  We denote the composition of this map with $\mu: X(\uk,\ur) \to \Gr_k(B)$ by
\[
\hmu: \hX(\uk,\ur) \to \Gr_k(B).
\]

\begin{lem}\label{lem:hxkr-smooth}
The variety $\hX(\uk,\ur)$ is  smooth, connected, and simply-connected.  Moreover, for any commutative ring $\bk$, $\coh^\bullet(\hX(\uk,\ur), \bk)$ is even and free.
\end{lem}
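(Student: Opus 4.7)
The plan is to mimic the strategy of Lemma~\ref{lem:xkr-smooth}: I would express $\hX(\uk,\ur)$ as the top of a tower of locally trivial fiber bundles whose fibers are Grassmannians or isotropic Grassmannians, each themselves smooth, connected, simply-connected, and with even and free cohomology. Once such a tower is in place, the four conclusions follow by induction along it: smoothness passes through smooth surjections; for a fibration $F \to E \to B$ with $F$ and $B$ both connected and simply-connected, the long exact sequence of homotopy groups forces $E$ to be so as well; and the Serre spectral sequence degenerates at $E_2$ when $B$ is simply-connected and both $\coh^\bullet(B)$ and $\coh^\bullet(F)$ are even and free, giving $\coh^\bullet(E)$ even and free.

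I would build the tower in three phases. Phase~1 reuses $Z_1 = \prod_i Y_{1i}$ from the proof of Lemma~\ref{lem:xkr-smooth}, encoding the data $(\tilde P_1,\ldots,\tilde P_m)$. Phase~2 adds $\tilde Q_i$ for each relevant~$i$ in a single layer: given $(\tilde P_i)$, the space of valid $\tilde Q_i$ is the isotropic Grassmannian $\Gr_{r_i/2}(\tilde P_i^\perp/\tilde P_i)^\iso$ when $r_i$ is even, and $\Gr_{(r_i+1)/2}((\tilde P_i)_+^\perp/\tilde P_i)^\iso$ when $r_i$ is odd. In both cases the strict inequality $r_i > \max\{0,2k_i-n_i\}$ is exactly what forces the ambient quadratic space to have dimension strictly greater than twice the isotropic dimension, so Lemma~\ref{lem:griso-conn} and Corollary~\ref{cor:gr-oddvan} deliver the required properties of the fiber. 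Phase~3 processes $i = 1,\ldots,m$ in turn, adding at each stage first $P_i$, then $Q_i$ if $i$ is relevant, then $H_i$. Each such addition is an ordinary Grassmannian bundle whose ambient space is of the form $(B_{<i} + \tilde P_i)/H_{i-1}$, $(B_{<i} + \tilde Q_i)/P_i$, or $(B_{<i} + \tilde P_i^\perp)/Q_i$ (respectively $/\bar Q_i$ in the odd case, or $/P_i$ when $i$ is not relevant); the orthogonality $B_{<i}\cap B_i = 0$ together with the containments built into the definition of $\hX(\uk,\ur)$ guarantees that each of these ambients has constant dimension, so these really are honest Grassmannian bundles.

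The main technical obstacle appears in Phase~3 for odd $r_i$, where $Q_i$ lives in $B_{\le i}\oplus \C$ rather than in $B_{\le i}$. I would handle this by replacing $Q_i$ with its image $\bar Q_i$ in $B_{\le i}$ under the canonical projection. The key observation is that $\tilde Q_i$ is isotropic in $B_i\oplus \C$ while the line $\{0\}\oplus \C$ is anisotropic, hence $\tilde Q_i\cap (\{0\}\oplus \C)=0$, and consequently $Q_i\cap (\{0\}\oplus \C)=0$ too; this forces $Q_i \to \bar Q_i$ to be an isomorphism, so $\dim\bar Q_i = \dim Q_i$ is constant, and the chain $\bar Q_i \subset B_{<i}+\bar{\tilde Q}_i \subset B_{<i}+\tilde P_i^\perp$ shows that the Grassmannian of valid $H_i$ has constant ambient dimension. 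With this identification in hand, the constraint $Q_i \subset H_i\oplus \C$ rephrases as $\bar Q_i \subset H_i$, and the inductive argument concludes exactly as in Lemma~\ref{lem:xkr-smooth}.
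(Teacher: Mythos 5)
Your proposal is correct and follows essentially the same strategy as the paper: build a tower of locally trivial (isotropic) Grassmannian bundles, use Lemma~\ref{lem:griso-conn} and Corollary~\ref{cor:gr-oddvan} for the fibers, and finish by induction via the long exact sequence of homotopy groups and the Serre spectral sequence, handling odd $r_i$ by replacing $Q_i$ with its projection $\bar Q_i$. The only organizational difference is that you add the $\tilde Q_i$'s as a separate bundle layer over the space of $\tilde P_i$'s, whereas the paper packages the pair $(\tilde P_i,\tilde Q_i)$ into a single factor $\cB_{k_i,r_i}(B_i)$ and cites Proposition~\ref{prop:fund-ses} for its topology --- but the content of that proposition is exactly the two-step fibration you describe, so the arguments coincide.
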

\begin{proof}
In the definition of $\hX(\uk,\ur)$, the subspaces $Q_j$ and $\tilde Q_j$ are defined only for certain $j$.  In this proof, it will be convenient to adopt the convention that
\[
Q_j = P_j,\ \tilde Q_j = \tilde P_j
\qquad\text{if }
\begin{cases}
\text{$\lag{-},{-}\rag_{B_j}$ is skew-symmetric, or} \\
\text{$\lag{-},{-}\rag_{B_j}$ is symmetric and $r_j = \max \{0, 2k_j - n_j\}$.}
\end{cases}
\]

The structure of the proof is very similar to that of Lemma~\ref{lem:xkr-smooth}.  Let $j$ be an integer with $0 \le j \le m$.  We define three varieties $Y_j$, $Y'_j$, and $Y''_j$ as follows: a point is a collection of vector spaces as indicated below, subject to the same conditions as those in the definition of $\hX(\uk,\ur)$.
\[
\begin{array}{llcllcll}
\multicolumn{2}{c}{\text{\textit{For $Y_j$}}} &&
\multicolumn{2}{c}{\text{\textit{For $Y'_j$}}} &&
\multicolumn{2}{c}{\text{\textit{For $Y''_j$}}}\\
\cline{1-2}\cline{4-5}\cline{7-8}
\tilde P_i^{\strut}, \tilde Q_i & \text{for $1 \le i \le m$} && 
  \tilde P_i, \tilde Q_i & \text{for $1 \le i \le m$} &&
  \tilde P_i, \tilde Q_i & \text{for $1 \le i \le m$} \\
P_i & \text{for $1 \le i \le j$} &&
  P_i, Q_i & \text{for $1 \le i \le j$} &&
  P_i, Q_i, H_i & \text{for $1 \le i \le j$} \\
Q_i, H_i & \text{for $1 \le i < j$} &&
  H_i & \text{for $1 \le i < j$}
\end{array}
\]

In place of~\eqref{eqn:xsm2}, we now have a sequence of maps
\begin{equation}\label{eqn:hxsm2}
\hX(\uk,\ur) = Y''_m \to Y'_m \to Y_m \to Y''_{m-1} \to \cdots \to Y'_1 \to Y_1.
\end{equation}
Let $1 \le j \le m$.  The same reasoning as in Lemma~\ref{lem:xkr-smooth} shows that $Y'_j \to Y_j$ and $Y_j \to Y''_{j-1}$ are locally trivial Grassmannian bundles.  This reasoning also applies verbatim to $Y''_j \to Y'_j$ except in the case where $\lag{-},{-}\rag_{B_j}$ is symmetric, $r_j > \max \{0, 2k_j - n_j\}$, and $r_j$ is odd.  In this case, an additional comment is required because the definition involves $H_j \oplus \C$ rather than $H_j$.  Specifically, the fiber of this  map over a given point of $Y'_j$ is the set of subspaces $H_j$ such that
\[
Q_j \subset H_j \oplus \C \subset B_{<j}\oplus\tilde P_j^\perp \oplus \C
\qquad\text{and}\qquad
\dim H_j = k_1 + \cdots + k_j.
\]
Let $\bar Q_j$ be the image of $Q_j$ under the projection map $B \oplus \C \to B$.  The condition above is equivalent to
\[
\bar Q_j \subset H_j \subset B_{<j} \oplus \tilde P_j^\perp
\qquad\text{and}\qquad
\dim H_j = k_1 + \cdots + k_j,
\]
so the fiber over the given point of $Y'_j$ is identified with
\[
\Gr_{\frac{r_i-1}{2}}((B_{<j}  \oplus \tilde P_j^\perp)/\bar Q_j).
\]
The subtlety is that in order for these fibers to assemble into a locally trivial bundle, we must check that the dimension of $\bar Q_j$ is constant on $Y'_j$.  This is true: indeed, the argument in the proof of Proposition~\ref{prop:fund-ses} shows that $\dim \bar Q_j = \dim Q_j$.  We conclude that $Y''_j \to Y'_j$ is also a locally trivial Grassmannian bundle.

Next, from the definition, we have
\[
Y_1 = Y_{11} \times \cdots \times Y_{1m}
\]
where
\[
Y_{1i} =
\begin{cases}
\cB_{k_i,r_i}(B) & \text{if $\lag{-},{-}\rag_{B_i}$ is symmetric and $\max\{0,2k_i-n_i\} < r_i$,} \\
\cC_{r_i}(B_i) & \text{if $r_i = 0'$ or $0''$,} \\
\Gr_{k_i-r_i}(B_i)^\iso & \text{in all other cases.}
\end{cases}
\]
Combining Corollary~\ref{cor:gr-oddvan}, Lemma~\ref{lem:griso-conn}, and Proposition~\ref{prop:fund-ses}, we conclude that $Y_1$ is smooth, connected, and simply connected, and that is cohomology is even and free.  Since each map in~\eqref{eqn:hxsm2} is a locally trivial Grassmannian bundle, the same conclusions hold for $\hX(\uk,\ur)$.
\end{proof}

\begin{prop}\label{prop:hxkr-image}
The image of $\hmu: \hX(\uk,\ur) \to \Gr_k(B)$ is $\overline{\cO_{\uk,\ur}}$, and the restricted map
\[
\hmu^\circ = \hmu|_{\hmu^{-1}(\cO_{\uk,\ur})}: \hmu^{-1}(\cO_{\uk,\ur}) \to \cO_{\uk,\ur}
\]
is a $\pi_1$-injective fibration.  If $\bk$ is a ring satisfying~\eqref{eqn:jmw}, the sheaf
\[
\cH^0(\hmu^\circ_*\bk)
\]
is a $Q_B$-equivariant local system in which every indecomposable locally free $Q_B$-equivariant local system on $\cO_{\uk,\ur}$ appears as a direct summand.
\end{prop}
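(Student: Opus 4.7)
My plan follows the pattern of the proofs of Proposition~\ref{prop:xkr-image} and Proposition~\ref{prop:fund-ses}, bootstrapping those results into the present, more elaborate setting. First, for the image assertion: since $\hmu$ factors as $\mu$ composed with the forgetful map $\hX(\uk,\ur) \to X(\uk,\ur)$, Proposition~\ref{prop:xkr-image} gives that the image lies in $\overline{\cO_{\uk,\ur}}$. For surjectivity onto $\cO_{\uk,\ur}$ (which suffices by properness), take $H \in \cO_{\uk,\ur}$, use Remark~\ref{rmk:xkr-inverse} to identify the unique preimage $(\tilde P_i, P_i, H_i)_{1 \le i \le m}$ in $X(\uk,\ur)$, and then for each index $i$ with $\lag{-},{-}\rag_{B_i}$ symmetric and $\max\{0, 2k_i - n_i\} < r_i$ (an ``interesting'' index), choose any isotropic subspace $\tilde Q_i$ of the required dimension satisfying $\tilde P_i \subset \tilde Q_i \subset \tilde P_i^\perp$ (or the analog in $B_i \oplus \C$ in the odd case), which exists by Lemma~\ref{lem:griso-conn}; define $Q_i$ as the preimage of $\tilde Q_i$ inside the appropriate $H_i$ or $H_i \oplus \C$.

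Next, I would show that $\hmu^\circ$ is a locally trivial fibration and analyze its fiber $F$. The tower of Grassmannian bundles from the proof of Lemma~\ref{lem:hxkr-smooth}, when restricted over $\cO_{\uk,\ur}$, exhibits $\hmu^{-1}(\cO_{\uk,\ur}) \to \cO_{\uk,\ur}$ as a locally trivial fibration. Using Remark~\ref{rmk:xkr-inverse} one has $F \cong \prod_{i \in \mathcal{I}} F_i$, where $\mathcal{I}$ is the set of interesting indices and each factor $F_i$ is the variety of choices of $(\tilde Q_i, Q_i)$ with all other data fixed. The argument of Proposition~\ref{prop:fund-ses}\eqref{it:hfib} identifies $F_i$ with (an affine space bundle over) an isotropic Grassmannian at the middle dimension of a nondegenerate even-dimensional orthogonal space, so by Lemma~\ref{lem:griso-conn} each $F_i$ has exactly two simply connected components. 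Hence $\pi_0(F) \cong (\Z/2\Z)^d$ and $\pi_1(F) = 1$, and the long exact sequence of homotopy groups gives $\pi_1$-injectivity.

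For the push-forward, as in the proof of Proposition~\ref{prop:fund-ses}\eqref{it:push-forward}, the sheaf $\cH^0(\hmu^\circ_*\bk)$ is a local system of rank $2^d$, and in the $Q_B$-equivariant setting — where locally free local systems on $\cO_{\uk,\ur}$ are classified by $\bk[(\Z/2\Z)^d]$-modules by Proposition~\ref{prop:filt-orbits}\eqref{it:filt-p1} — it is the right adjoint of the pullback functor applied to $\bk$. By general principles this right adjoint is coinduction along the inclusion of the subgroup fixing a chosen component of $F$, so the resulting representation of $(\Z/2\Z)^d$ is the regular representation. Since $2$ is invertible in $\bk$ and $(\Z/2\Z)^d$ is abelian, the regular representation decomposes as the direct sum of all its characters; these characters are precisely the indecomposable locally free $Q_B$-equivariant local systems on $\cO_{\uk,\ur}$, so each appears as a direct summand of $\cH^0(\hmu^\circ_*\bk)$.

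The main obstacle will be the representation-theoretic step at the end, namely verifying that the equivariant monodromy realizes the full regular action of $(\Z/2\Z)^d$ on $\pi_0(F)$ — equivalently, that the two-component swap associated to each interesting index $i$ is effected independently by monodromy around a suitable loop. I expect this to be reducible to the $m = 1$ case already handled by Proposition~\ref{prop:fund-ses}, via the affine space bundle $\cO_{\uk,\ur} \to \cC_{\uk,\ur}$ of Proposition~\ref{prop:filt-orbits}\eqref{it:filt-bdle}, which transports loops in the individual $\cC_{k_i,r_i}(B_i)$ to loops in $\cO_{\uk,\ur}$ that act on only one factor $\pi_0(F_i)$ at a time.
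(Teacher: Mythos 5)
Your proposal follows essentially the same route as the paper's proof: identify the fiber $F$ over $H\in\cO_{\uk,\ur}$ as a product of isotropic Grassmannians of the middle dimension, deduce $\pi_0(F)\cong(\Z/2\Z)^d$ and $\pi_1(F)=1$ from Lemma~\ref{lem:griso-conn}, feed this into the long exact sequence of the fibration, and then translate to $\bk[(\Z/2\Z)^d]$-modules as in Proposition~\ref{prop:fund-ses}\eqref{it:push-forward}. Two small remarks. First, the parenthetical ``(an affine space bundle over)'' is unnecessary: once $\tilde Q_i$ is chosen, $Q_i$ is forced to be the preimage of $\tilde Q_i$ under $H\cap B_{\le i}\to\pr_i H$ (or its analogue with a $\C$-summand adjoined), because $Q_i\cap B_{<i}$ is pinned between $P_i\cap B_{<i}$ and $H_i\cap B_{<i}$, which Remark~\ref{rmk:xkr-inverse} shows are equal; so $F_i$ \emph{is} an isotropic Grassmannian. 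Second, the ``main obstacle'' you flag is resolved without any reduction to the $m=1$ case: connectivity of $\hmu^{-1}(\cO_{\uk,\ur})$ (from Lemma~\ref{lem:hxkr-smooth}, as this is an open dense subset of the connected $\hX(\uk,\ur)$) makes the $\pi_1(\cO_{\uk,\ur})$-action on $\pi_0(F)$ transitive by exactness, and since the equivariant monodromy factors through $\pi_0(\Stab_{Q_B}(H))\cong(\Z/2\Z)^d$ by Proposition~\ref{prop:filt-orbits}\eqref{it:filt-p1}, a transitive action of a group of order $2^d$ on a set of $2^d$ elements is automatically free, hence regular; this counting argument is what the paper's invocation of Proposition~\ref{prop:filt-orbits} together with the short exact sequence is doing.
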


\begin{proof}
Let $H \in \cO_{\uk,\ur}$, and let $F = \hmu^{-1}(H)$.  A point in $F$ consists of data $(\tilde P_i, P_i, H_i, \tilde Q_i, Q_i)$ satisfying varying conditions.  By Proposition~\ref{prop:xkr-image}, the terms $\tilde P_i$, $P_i$, and $H_i$ are determined by $H$.  To describe $\hmu^{-1}(H)$, we must describe the space of choices for $Q_i$ and $\tilde Q_i$.  

The formulas in Remark~\ref{rmk:xkr-inverse} imply that $P_i \cap B_{<i} = H_i \cap B_{<i}$ for all $i$, and that these spaces have dimension $k_1 + \cdots + k_{i-1}$.  From the definition of $\hX(\uk,\ur)$, we see that $Q_i \cap B_{<i}$ must also equal $H_i \cap B_{<i}$.  This implies that $\dim \pr_i Q_i = \dim \tilde Q_i$, and hence that $\tilde Q_i = \pr_i Q_i$.  Phrased another way, $Q_i$ must be the preimage of $\tilde Q_i$ under the map $H \cap B_i \to \pr_i H$ (or if $r_i$ is odd, under $(H \cap B_i) \oplus C \to \pr_i H \oplus \C$).  Thus, it remains to determine the space of choices for $\tilde Q_i$.  The quotient $\tilde Q_i/\tilde P_i$ is an isotropic subspace of $\pr_i H/\tilde P_i$ (or if $r_i$ is odd, of $(\pr_i H \oplus \C)/\tilde P_i$), which carries a nondegenerate bilinear form.  We conclude that the fiber $F = \hmu^{-1}(H)$ is described by
\begin{multline*}
F \cong \prod_{\substack{1 \le i \le m \\ \text{$\lag{-},{-}\rag_{B_i}$ symmetric} \\ \max\{0,2k_i - n_i\} < r_i \\ \text{$r_i$ even}}}
\Gr_{r_i/2}(\pr_i H/ \rad(\pr_i H))^\iso
\quad
\times \\
\prod_{\substack{1 \le i \le m \\ \text{$\lag{-},{-}\rag_{B_i}$ symmetric} \\ \max\{0,2k_i - n_i\} < r_i \\ \text{$r_i$ odd}}}
\Gr_{(r_i+1)/2}((\pr_i H \oplus \C)/ \rad(\pr_i H))^\iso
\end{multline*}
More generally, $\hmu^{-1}(\cO_{\uk,\ur}) \to \cO_{\uk,\ur}$ is a locally trivial fibration with fibers isomorphic to $F$.  

 Using Lemma~\ref{lem:griso-conn}, we see that $\pi_0(F) \cong (\Z/2\Z)^d$ where
\[
d = |\{ i \mid \text{$\lag{-},{-}\rag_{B_i}$ is symmetric and $\max\{0,2k_i - n_i\} < r_i$} \},
\]
and $\pi_1(F) = 1$.  So we get the following  short exact sequence of homotopy groups:
\[
 1 \to \pi_1(\hmu^{-1}(\cO_{\uk,\ur})) \to \pi_1(\cO_{k,r}(V)) \to  (\mb{Z}/2\mb{Z})^d\to 1.
\]
Proposition~\ref{prop:filt-orbits} also identifies the middle term with $(\Z/2\Z)^d$, so $\hmu^{-1}(\cO_{\uk,\ur})$ must be simply connected.  Then the same reasoning as in Proposition~\ref{prop:fund-ses}\eqref{it:push-forward} shows that $\cH^0(\hmu_*^\circ \bk)$ is the local system corresponding to the regular repreentation $\bk[(\Z/2\Z)^d]$.  Since $2$ is invertible in $\bk$, the regular representation decomposes as a direct sum of representations that are each free of rank~$1$ over $\bk$.  Under the additional assumptions from~\eqref{eqn:jmw}, every indecomposable $\bk[(\Z/2\Z)^d]$-module that is free over $\bk$ occurs as a direct summand of the regular representation.  This proves the second assertion in the proposition.
\end{proof}

Let $\hF(\uk,\ur)$ be the variety defined as follows: a point of $\hF(\uk,\ur)$ consists of a point of $F(\uk,\ur)$ and $\tilde P_i, 1\leq i\leq m$ satisfying the relations in $X(\uk,\ur)$ together with the following additional data: for each $i$ such that $\lag{-},{-}\rag_{B_i}$ is symmetric and $r_i > \max\{0,2k_i-n_i\}$, we require a space $Q_i$ where
\begin{align*}
Q_i & \subset B, & P_i &\subset Q_i \subset H_i, & \pr_i Q_i &\subset (\pr_i Q_i)^\perp &&\text{if $r_i$ is even,} \\
Q_i & \subset B \oplus \C, & P_i &\subset Q_i \subset H_i \oplus \C & \pr_{i,+}Q_i &\subset (\pr_{i,+}Q_i)^\perp_+ &&\text{if $r_i$ is odd,}
\end{align*}
and where the dimension of $Q_i$ is as in the definition of $\hX(\uk,\ur)$, and where for $r_i$ odd, we set
\[
\pr_{i,+}Q_i = Q_i/(Q_i \cap B_{<i}) \subset B_i \oplus \C.
\]
There are obvious maps
\[
\hX(\uk,\ur) \to \hF(\uk,\ur) \to F(\uk,\ur) 
\qquad\text{and}\qquad
\hmu_F: \hF(\uk,\ur) \to \Gr_k(B)
\]
given by forgetting appropriate parts of the data.  

\begin{lem}\label{lem:hfpkr-affpave}
Let $M \in \Gr_k(B)$.  The variety $(\hmu_F)^{-1}(M)$ admits an affine paving
\[
\hmu_F^{-1}(M) = \bigsqcup_{\alpha \in I} S_\alpha
\]
such that the following functions are constant on each piece $S_\alpha$ of the paving:
\[
\dim P_i \cap B_{<i},\quad  \dim H_i \cap B_{<i},\quad \dim P_i \cap (\rad \pr_i M  + B_{<i}).
\]
\end{lem}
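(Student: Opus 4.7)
The plan is to extend the inductive tower construction from the proof of Lemma~\ref{lem:fkr-pave} to accommodate the additional subspaces $\tilde P_i$ and $Q_i$ present in $\hF(\uk,\ur)$. Given $M \in \Gr_k(B)$, I would introduce a finite sequence of intermediate varieties that parametrize progressively more of the data $(P_i, H_i, \tilde P_i, Q_i)$: for each $j = 1, \ldots, m$, we add in succession the subspaces $P_j$, $H_j$, $\tilde P_j$, and then $Q_j$ (whenever it is part of the data), always imposing those defining conditions of $\hF(\uk,\ur)$ that involve the already-introduced data, together with the constraints $P_i, H_i \subset M \cap B_{\le i}$. The bottom of the tower is a point and the top is $\hmu_F^{-1}(M)$.

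At each level one asserts that the variety admits an affine paving on which the three dimension-intersection functions listed in the statement are constant. The transitions adding $P_j$ and $H_j$ are controlled exactly as in Lemma~\ref{lem:fkr-pave}: the fiber over a stratum of the previous level is identified with an isotropic Grassmannian (for $P_j$) or an ordinary Grassmannian (for $H_j$) inside a quotient of $M \cap B_{\le j}$, and Lemma~\ref{lem:gr-pave} applied with marked isotropic subspaces coming from $M \cap B_{<j}$ and from $(\rad \pr_j M + B_{<j}) \cap M \cap B_{\le j}$ delivers the required paving. These two subspaces really are isotropic in the relevant quotient form because $B_{<j}$ lies in the radical of the pullback form on $B_{\le j}$ while $\rad \pr_j M$ is by definition isotropic in $B_j$. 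The transitions introducing $\tilde P_j$ and $Q_j$ do not involve any of the three tracked functions, so they only require the observation that the fiber is an isotropic Grassmannian in the appropriate ambient space---$(\pr_j H_j)^\perp / \pr_j P_j$ for $\tilde P_j$ (or a connected component thereof when $r_j$ is $0'$ or $0''$), and an isotropic Grassmannian in $H_j/P_j$ or in $(H_j \oplus \C)/P_j$ equipped with the form induced from $\lag{-},{-}\rag_+$ for $Q_j$---so that Lemma~\ref{lem:gr-affpave} applies.

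The main obstacle is the bookkeeping. To guarantee that each transition genuinely forms a locally trivial fibration over the previous level, one must track auxiliary dimension-intersection functions (for instance $\dim \pr_j P_j \cap \rad \pr_j M$ or $\dim \pr_j H_j \cap \tilde P_j^\perp$) throughout the induction, so that the ambient quotients appearing in the fibers---such as $M \cap B_{\le j} \cap (B_{<j} + (\pr_j P_j)^\perp)/P_j$ for the $H_j$ step---have constant dimension along each stratum. Incorporating these auxiliary constraints as additional marked isotropic subspaces in the applications of Lemma~\ref{lem:gr-pave} and Lemma~\ref{lem:gr-affpave} at each stage is the real technical heart of the argument. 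Once this bookkeeping is handled, the tower produces the desired affine paving of $\hmu_F^{-1}(M)$ on which the three functions named in the statement are constant.
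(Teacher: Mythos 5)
Your proposal takes a genuinely different route from the paper's. The paper does \emph{not} rebuild a tower from scratch: it starts from the affine paving of $\mu^{-1}(M) \subset X(\uk,\ur)$ already supplied by Lemma~\ref{lem:xkr-affpave} (which encodes all the data $P_i, H_i, \tilde P_i$), and then observes that the forgetful map $\hF(\uk,\ur) \to X(\uk,\ur)$ makes $\hF(\uk,\ur) \times_{X(\uk,\ur)} S$, for each stratum $S$ of that paving, a fibration over $S$ whose fiber over $s$ is the product over the relevant indices $i$ of the isotropic Grassmannians $\Gr\bigl(H_i(s)/P_i(s)\bigr)^\iso$ (resp.\ $\Gr\bigl((H_i(s)\oplus\C)/P_i(s)\bigr)^\iso$ when $r_i$ is odd). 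Since each such fiber has an affine paving by Corollary~\ref{cor:gr-oddvan}, and the three tracked dimension functions are functions of $s$ alone, the result follows immediately. By contrast, you propose to interleave $P_j, H_j, \tilde P_j, Q_j$ into a single long tower, which means re-proving in parallel what Lemma~\ref{lem:fkr-pave} and Lemma~\ref{lem:xkr-affpave} already establish. This can be made to work, but it imposes exactly the bookkeeping burden you identify at the end (tracking enough auxiliary intersection dimensions so that each rung of the tower is a genuine fibration by a Grassmannian of fixed type), and none of that bookkeeping is present in the paper's argument precisely because the $\tilde P_i$'s are absorbed wholesale through the cited lemma rather than added rung by rung. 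If you do pursue your version, be careful with the claimed auxiliary function $\dim \pr_j H_j \cap \tilde P_j^\perp$: the constraint $\pr_j H_j \subset \tilde P_j^\perp$ already forces this to equal $\dim\pr_j H_j$, which is constant on strata because $\dim H_j$ and $\dim H_j \cap B_{<j}$ are; so this particular quantity needs no separate tracking. The quantity that actually controls the isomorphism type of the fiber at the $\tilde P_j$ step is $\dim\rad(\pr_j H_j)$, and arranging for that to be constant on strata is the real delicate point that the paper sidesteps by citing Lemma~\ref{lem:xkr-affpave}.
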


\begin{proof}
	Let $S$ be a piece of the affine paving of $\mu^{-1}(M) \subset X(\uk,\ur)$ from Lemma~\ref{lem:xkr-affpave}.  It is enough to show that
\[
\hF(\uk,\ur) \times_{X(\uk,\ur)} S
\]
admits an affine paving.  For $s \in S$, we have subspaces $P_i(s), H_i(s), \tilde P_i(s) \subset B$ satisfying the condition in $X(\uk,\ur)$, and with $H_m(s) = M$.   We have an identification
\[
\hF(\uk,\ur) \times_{X(\uk,\ur)} S
=
\left\{ (s, (Q_i)_{\text{certain }1\leq i\leq m}) \,\Big|\, 
\begin{array}{@{}c@{}}
\text{$s \in S$,  and} \\
\text{$ P_i(s) \subset Q_i \subset H_i$ or  $H_i\oplus \C$}\end{array}\right\}.
\] In more detail, suppose $\lag{-},{-}\rag_{B_i}$ is symmetric and $r_i > \max\{0,2k_i - n_i\}$.  If $r_i$ is even, then the space of choices for $ Q_i$ over $s \in S$ is identified with
\[
\Gr( H_i(s)/( P_i(s))^\iso.
\]
If $r_i$ is odd, then we instead have
\[
\Gr((H_i(s)\oplus \C )/P_i(s))^\iso.
\]
Combining these cases, we conclude that $\hF(\uk,\ur) \times_{X(\uk,\ur)} S \to S$ is a locally trivial fibration whose fibers are products of isotropic Grassmannians.  The lemma follows.

\end{proof}

\begin{lem}\label{lem:hxkr-affpave}
Let $M \in \Gr_k(B)$.  The variety $\hmu^{-1}(M)$ admits an affine paving
\[
\hmu^{-1}(M) = \bigsqcup_{\alpha \in I} S_\alpha
\]
such that the following functions are constant on each piece $S_\alpha$ of the paving:
\begin{gather*}
\dim P_i \cap B_{<i},\quad  \dim H_i \cap B_{<i},\quad \dim P_i \cap (\rad \pr_i M  + B_{<i}),
\end{gather*}
\end{lem}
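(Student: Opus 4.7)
The plan is to mirror the proof of Lemma~\ref{lem:xkr-affpave}, but starting from Lemma~\ref{lem:hfpkr-affpave} in place of Lemma~\ref{lem:fkr-pave}. First I would apply Lemma~\ref{lem:hfpkr-affpave} to produce an affine paving $\hmu_F^{-1}(M) = \bigsqcup_\alpha S_\alpha$ on which the three listed dimension functions are constant. These functions involve only the $P_i$'s and $H_i$'s, which are untouched by the forgetful map $\hX(\uk,\ur) \to \hF(\uk,\ur)$, so once we pave its fibers suitably the listed functions will automatically remain constant on the pieces of the resulting paving of $\hmu^{-1}(M)$. Thus it suffices to produce an affine paving of the fibered product $\hX(\uk,\ur) \times_{\hF(\uk,\ur)} S$ for each stratum $S$.

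Fix such an $S$. For $s \in S$, the point determines the data $(P_i(s), H_i(s), \tilde P_i(s), Q_i(s))$, and the fiber of $\hX \to \hF$ over $s$ is the product over the (finitely many) indices $i$ for which $Q_i$ is defined of the spaces of isotropic subspaces $\tilde Q_i$ of $B_i$ (respectively $B_i \oplus \C$, when $r_i$ is odd) of prescribed dimension that contain $W_i(s) := \tilde P_i(s) + \pr_i Q_i(s)$ (respectively the $\pr_{i,+}$ variant). Each $W_i(s)$ is itself isotropic, so the corresponding factor is identified with an isotropic Grassmannian $\Gr_{a_i}(W_i(s)^\perp / W_i(s))^\iso$ for an appropriate $a_i$, and hence admits an affine paving by Lemma~\ref{lem:gr-affpave}.

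For these pointwise pavings to assemble into an affine paving of $\hX(\uk,\ur) \times_{\hF(\uk,\ur)} S$, the fibration over $S$ must be locally trivial, which reduces to showing that $\dim W_i(s)$---equivalently, $\dim Q_i(s) \cap B_{<i}$ and $\dim(\tilde P_i(s) \cap \pr_i Q_i(s))$---is constant on $S$. This is the main obstacle, since Lemma~\ref{lem:hfpkr-affpave} as stated only records constancy of the three functions named there. One remedies this by revisiting the construction of that lemma: its affine paving was assembled by applying Lemma~\ref{lem:gr-affpave} fiberwise, and the hypothesis of Lemma~\ref{lem:gr-affpave} permits the inclusion of any finite chain of isotropic test subspaces. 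Enlarging the chains used in the proof of Lemma~\ref{lem:hfpkr-affpave} to include subspaces sensitive to $\dim Q_i \cap B_{<i}$ and $\dim \tilde P_i \cap \pr_i Q_i$ refines each $S_\alpha$ into finer affine strata on which all of the needed dimensions are constant, without spoiling constancy of the three original functions. Once this refinement is in place, combining the affine paving of $S$ with the affine pavings of the fibers in the standard way gives the desired affine paving of $\hmu^{-1}(M)$.
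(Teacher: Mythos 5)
Your proof has the same skeleton as the paper's: starting from a piece $S$ of the affine paving of $\hmu_F^{-1}(M)$ from Lemma~\ref{lem:hfpkr-affpave}, the fiber of $\hX(\uk,\ur) \to \hF(\uk,\ur)$ over $s \in S$ is identified with a product of isotropic Grassmannians of subspaces $\tilde Q_i$ containing $W_i(s) = \tilde P_i(s) + \pr_i Q_i(s)$. Where you genuinely add something is in flagging the local-triviality issue: the paper's proof simply asserts that $\hX(\uk,\ur) \times_{\hF(\uk,\ur)} S \to S$ is a locally trivial fibration, but this assertion requires $\dim W_i(s)$ to be constant on $S$, and Lemma~\ref{lem:hfpkr-affpave} as stated only guarantees constancy of $\dim P_i \cap B_{<i}$, $\dim H_i \cap B_{<i}$, and $\dim P_i \cap (\rad \pr_i M + B_{<i})$. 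The quantities $\dim Q_i \cap B_{<i}$ and $\dim \tilde P_i \cap \pr_i Q_i$ are \emph{not} obviously controlled, so you are right to worry: on a piece of the paver from Lemma~\ref{lem:hfpkr-affpave}, the space $W_i(s)^\perp/W_i(s)$ could in principle jump in dimension, so the isotropic Grassmannian fibers could change isomorphism type, which would obstruct assembling the pointwise pavings.

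Your proposed remedy does work. The paving in Lemma~\ref{lem:hfpkr-affpave} is assembled by paving the isotropic Grassmannian fibers $\Gr(H_i(s)/P_i(s))^\iso$ (and the odd variants) of $\hF(\uk,\ur) \to X(\uk,\ur)$, and Lemma~\ref{lem:gr-affpave} allows any chain of isotropic test subspaces. Taking, as the chain inside $H_i(s)/P_i(s)$, the nested subspaces $(B_{<i} \cap H_i(s) + P_i(s))/P_i(s) \subset \widehat P_i(s)/P_i(s)$, where $\widehat P_i(s)$ denotes the preimage of $\tilde P_i(s)$ under $H_i(s) \to \pr_i H_i(s)$, one checks easily that both are isotropic for the induced form and that intersecting $Q_i/P_i(s)$ with them recovers exactly $\dim Q_i \cap B_{<i}$ and $\dim Q_i \cap \widehat P_i(s)$, hence $\dim \tilde P_i(s) \cap \pr_i Q_i$. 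With this refinement in place the rest of your argument goes through, and the three functions in the statement remain constant because they were already constant on the coarser pieces. So your proof follows the paper's route but correctly closes a small gap that the paper's terse ``locally trivial fibration'' assertion glosses over.
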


\begin{proof}
Let $S$ be a piece of the affine paving of $(\hmu_F)^{-1}(M)$ from Lemma~\ref{lem:hfpkr-affpave}.  We then have
\begin{multline*}
\hX(\uk,\ur) \times_{\hF(\uk,\ur)} S
= \\
\left\{ (s, (\tilde Q_i)_{\text{certain $i \in \{1,\ldots,m\}$}}) \,\Big|\, 
\begin{array}{@{}c@{}}
\text{$s \in S$ and }\\
\text{$\tilde P_i(s) + \pr_i Q_i(s) \subset \tilde Q_i \subset (\tilde Q_i)^\perp$ or $(\tilde Q_i)^\perp_+$}
\end{array} \right\}.
\end{multline*}
In more detail, suppose $\lag{-},{-}\rag_{B_i}$ is symmetric and $r_i > \max\{0,2k_i - n_i\}$.  If $r_i$ is even, then the space of choices for $\tilde Q_i$ over $s \in S$ is identified with
\[
\Gr(\tilde P_i(s)^\perp/(\tilde P_i(s) + \pr_iQ_i(s)))^\iso.
\]
If $r_i$ is odd, then we instead have
\[
\Gr((\tilde P_i(s)^\perp \oplus \C)/(\tilde P_i(s) + \pr_iQ_i(s)))^\iso.
\]
Combining these cases, we conclude that $\hX(\uk,\ur) \times_{\hF(\uk,\ur)} S \to S$ is a locally trivial fibration whose fibers are products of isotropic Grassmannians.  The lemma follows.
\end{proof}

\section{Main results on parity sheaves}
\label{sec:parity}

Let $B = B_1 \oplus \cdots \oplus B_m$ be as in Section~\ref{sec:dsums}.  Let $\bk$ be a ring satisfying assumption~\eqref{eqn:jmw}.  In this section, we study the $Q_B$-equivariant derived category of constructible sheaves on $\Gr_k(B)$, denoted by
\[
\Db_{Q_B}(\Gr_k(B),\bk).
\]

%

Consider the collection of all indecomposable $Q_B$-equivariant local systems on $Q_B$-orbits in $\Gr_k(B)$.  The ``even and free'' property from Proposition~\ref{prop:filt-orbits}\eqref{it:filt-oddvan} means that this collection of local systems satisfies the basic assumptions in~\cite[Eq.~(2.1) and~(2.2)]{jmw:ps} needed for the theory of parity sheaves.  In particular, it makes sense to speak of ``$Q_B$-equivariant parity sheaves'' on $\Gr_k(B)$.  By~\cite[Theorem~2.12]{jmw:ps}, we have the following classification result:

\begin{thm}[\cite{jmw:ps}]\label{jmw}
\phantomsection
\begin{enumerate}
\item For each orbit $\cO_{\uk,\ur} \subset \Gr_k(B)$ and each indecomposable locally free $\bk$-local system $\cL$ on $\cO_{\uk,\ur}$, there is (up to isomorphism) at most one indecomposable parity sheaf
\[
\cE(\cO_{\uk,\ur}, \cL)
\]
that is supported on $\overline{\cO_{\uk,\ur}}$ and that satisfies
\[
\cE(\cO_{\uk,\ur},\cL)|_{\cO_{\uk,\ur}} \cong \cL[\dim \cO_{\uk,\ur}].
\]
\item Every parity complex in $\Db_{Q_B}(\Gr_k(B),\bk)$ is isomorphic to a direct sum of objects of the form $\cE(\cO_{\uk,\ur},\cL)[m]$.
\end{enumerate}
\end{thm}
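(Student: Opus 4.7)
The plan is to reduce this theorem directly to the general classification of parity sheaves in~\cite[Theorem~2.12]{jmw:ps}, which yields both the uniqueness in part~(1) and the indecomposable-decomposition in part~(2) as soon as the appropriate parity-vanishing hypotheses hold for the stratification-plus-local-systems data. No genuinely new argument is needed; the real work is to confirm that the results collected in Section~\ref{sec:dsums} supply exactly the input that the JMW formalism demands.

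The first step is to identify the indexing data: the stratification of $\Gr_k(B)$ by the $Q_B$-orbits $\cO_{\uk,\ur}$ of Proposition~\ref{prop:filt-orbits}, together with the collection of indecomposable locally free $Q_B$-equivariant $\bk$-local systems on each orbit. By Proposition~\ref{prop:filt-orbits}\eqref{it:filt-p1} the equivariant component group of any such orbit is $(\Z/2\Z)^d$, and under hypothesis~\eqref{eqn:jmw} the group algebra $\bk[(\Z/2\Z)^d]$ is semisimple with every indecomposable module free of rank one; so the admissible $\cL$ are precisely the characters of this $2$-group. The second step is the parity-vanishing input of~\cite[(2.1)--(2.2)]{jmw:ps}, namely that $\coh^\bullet_{Q_B}(\cO_{\uk,\ur},\cL)$ is concentrated in even degrees and free for each such pair $(\cO_{\uk,\ur},\cL)$. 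This is exactly the content of Proposition~\ref{prop:filt-orbits}\eqref{it:filt-oddvan}.

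With these two ingredients, part~(1) follows from the standard uniqueness argument in~\cite{jmw:ps}: two candidate parity extensions with the prescribed support and restriction automatically agree on the open orbit, and the obstructions to extending an isomorphism across deeper strata lie in equivariant $\Ext$ groups that vanish by parity. Part~(2) is then proved by induction on the support: for any indecomposable parity complex, the restriction to the smallest orbit in its support is, up to a shift, an indecomposable local system $\cL[\dim \cO]$, and the parity hypothesis lets one split off the candidate extension $\cE(\cO,\cL)$ as a direct summand, so the remainder has strictly smaller support and the induction closes. I do not foresee a genuine obstacle; the only bookkeeping point is that, because $Q_B / I_B$ is a connected unipotent group, passing from $I_B$-equivariance to $Q_B$-equivariance introduces no additional component-group data, and the JMW framework applies to $\Db_{Q_B}(\Gr_k(B),\bk)$ verbatim. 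Note that this theorem asserts only uniqueness, not existence, of the $\cE(\cO_{\uk,\ur},\cL)$; existence is the separate and much deeper task carried out later using the resolutions of Sections~\ref{sec:resoln2}--\ref{sec:locsys2}.
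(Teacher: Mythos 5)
Your proposal is correct and takes essentially the same route as the paper: one verifies the parity-vanishing hypotheses~\cite[Eq.~(2.1) and~(2.2)]{jmw:ps} for the stratification by $Q_B$-orbits via Proposition~\ref{prop:filt-orbits}\eqref{it:filt-oddvan}, and then quotes~\cite[Theorem~2.12]{jmw:ps} directly. You also correctly flag that the theorem gives only uniqueness, with existence deferred to the later construction via $\hmu$, exactly as the paper does.
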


Note that the first part of this result says ``at most one'': the general theory from~\cite{jmw:ps} does not guarantee the existence of $\cE(\cO_{\uk,\ur},\cL)$.

\begin{thm}
For each orbit $\cO_{\uk,\ur} \subset \Gr_k(B)$ and each $Q_B$-equivariant local system $\cL$ on $\cO_{\uk,\ur}$, the parity sheaf $\cE(\cO_{\uk,\ur},\cL)$ exists.  Moreover,
\[
\coh^\bullet(\Gr_k(\cB), \cE(\cO_{\uk,\ur},\cL)[-\dim \cO_{\uk,\ur}])
\]
is even and free.
\end{thm}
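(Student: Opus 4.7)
The plan is to verify that
\[
\cF = \hmu_* \bk_{\hX(\uk,\ur)}[\dim \hX(\uk,\ur)]
\]
is a parity complex on $\Gr_k(B)$ (supported on $\overline{\cO_{\uk,\ur}}$), and then to extract the desired parity sheaves as direct summands using Theorem~\ref{jmw} and Proposition~\ref{prop:hxkr-image}.

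To establish $*$-parity, fix a $Q_B$-orbit $\cO'$ in $\overline{\cO_{\uk,\ur}}$. Since $\hmu$ is proper and $Q_B$-equivariant, $\cF|_{\cO'}$ is the shifted pushforward of $\bk$ along a $Q_B$-equivariant locally trivial fibration $\hmu^{-1}(\cO') \to \cO'$. By Lemma~\ref{lem:hxkr-affpave} the fibers admit affine pavings, so their cohomology is even and free; hence the cohomology sheaves of $\cF|_{\cO'}$ are $Q_B$-equivariant, locally free, and concentrated in degrees of a single parity. Assumption~\eqref{eqn:jmw} together with Proposition~\ref{prop:filt-orbits}\eqref{it:filt-p1} makes $\bk[(\Z/2\Z)^d]$ semisimple with indecomposable summands of rank one, so each cohomology sheaf decomposes as a direct sum of indecomposable locally free rank-one $Q_B$-equivariant local systems. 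To upgrade this stalkwise picture to a genuine splitting of $\cF|_{\cO'}$ in the equivariant derived category, use that for any two such local systems $\cL_1, \cL_2$ on $\cO'$,
\[
\Ext^n_{Q_B}(\cL_1,\cL_2) = \coh^n_{Q_B}(\cO',\cL_1^\vee\otimes\cL_2),
\]
which vanishes for $n$ odd by Proposition~\ref{prop:filt-orbits}\eqref{it:filt-oddvan}. Since all obstructions to formality of a complex whose cohomology sheaves live in a single parity lie in odd $\Ext$-groups, $\cF|_{\cO'}$ splits as a direct sum of shifted local systems. Verdier self-duality of $\cF$, a consequence of the smoothness of $\hX(\uk,\ur)$ and the properness of $\hmu$, then promotes $*$-parity to $!$-parity, so $\cF$ is a parity complex.

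By Theorem~\ref{jmw} one obtains a decomposition
\[
\cF \cong \bigoplus_j \cE(\cO_j,\cL_j)[n_j].
\]
Restricting to $\cO_{\uk,\ur}$ discards summands supported on strictly smaller orbits and yields
\[
\hmu^\circ_*\bk[\dim\hX(\uk,\ur)] \cong \bigoplus_{j:\cO_j = \cO_{\uk,\ur}} \cL_j[\dim\cO_{\uk,\ur} + n_j].
\]
Summands with $n_j = \dim\hX(\uk,\ur) - \dim\cO_{\uk,\ur}$ contribute directly to the local system $\cH^0(\hmu^\circ_*\bk)$; by Proposition~\ref{prop:hxkr-image}, every indecomposable locally free $Q_B$-equivariant local system $\cL$ on $\cO_{\uk,\ur}$ appears as a summand there. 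Hence each $\cE(\cO_{\uk,\ur},\cL)$ exists, as a direct summand of $\cF$ shifted by $\dim\hX(\uk,\ur) - \dim\cO_{\uk,\ur}$.

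For the hypercohomology statement, this summand inclusion displays $\coh^\bullet(\Gr_k(B),\cE(\cO_{\uk,\ur},\cL)[-\dim\cO_{\uk,\ur}])$ as a direct summand, after an appropriate reindexing by $\dim\hX(\uk,\ur) - \dim\cO_{\uk,\ur}$, of $\coh^\bullet(\hX(\uk,\ur),\bk)$, which is even and free by Lemma~\ref{lem:hxkr-smooth}; both properties pass to direct summands. The main obstacle is the formality argument: lifting the stalkwise single-parity behavior to an honest direct-sum splitting on each orbit requires Proposition~\ref{prop:filt-orbits}\eqref{it:filt-oddvan} in its full strength, and not merely the computation of component groups provided by part~\eqref{it:filt-p1}.
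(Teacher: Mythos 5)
Your proof follows the same overall strategy as the paper: form $\cF = \hmu_*\bk[\dim\hX(\uk,\ur)]$, verify it is a parity complex (using Lemma~\ref{lem:hxkr-affpave} for the stalks and Verdier self-duality for the costalks), decompose via Theorem~\ref{jmw}, and extract $\cE(\cO_{\uk,\ur},\cL)$ from the summands on the open stratum via Proposition~\ref{prop:hxkr-image}. The final hypercohomology step, identifying $\coh^\bullet(\Gr_k(B),\hmu_*\bk)$ with $\coh^\bullet(\hX(\uk,\ur),\bk)$ and applying Lemma~\ref{lem:hxkr-smooth}, is also the paper's argument.

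The one genuine difference is your formality step. In the JMW framework, a complex is $*$-even on $\cO'$ once the cohomology sheaves $\cH^n(\cF|_{\cO'})$ vanish for $n$ odd (in the appropriate parity class) and lie in the allowed category of equivariant local systems; one does \emph{not} need $\cF|_{\cO'}$ to split as a direct sum of shifted local systems in the derived category. Since $\hmu$ is proper and $Q_B$-equivariant, $\hmu^{-1}(\cO')\to\cO'$ is a locally trivial fibration, so proper base change gives that the cohomology sheaves of $\cF|_{\cO'}$ are automatically $Q_B$-equivariant local systems with free stalks once the affine-paving input is in place; the $\Ext$-vanishing/formality argument using Proposition~\ref{prop:filt-orbits}\eqref{it:filt-oddvan} is logically correct but not required here. (In the paper, the role of that proposition is only to verify the ambient JMW axioms, stated just before Theorem~\ref{jmw}, so that parity theory applies at all.) Your closing remark that part~\eqref{it:filt-oddvan} is needed ``in its full strength'' for the parity check is therefore an overstatement: it is needed to set up the theory, but the parity verification itself goes through with just the affine-paving and self-duality inputs. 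Everything else in your proof is sound.
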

\begin{proof}
Consider the map $\hmu: \hX(\uk,\ur) \to \Gr_k(\cB)$.  Let $d = \dim \hX(\uk,\ur)$, and let $\cF = \hmu_*\bk[d]$.  Because $\hX(\uk,\ur)$ is smooth and $\hmu$ is proper, we see that $\cF$ is isomorphic to its own Verdier dual.  The stalk of $\cF$ at a point $M \in \Gr_k(B)$ is given by
\[
\cF_M = \coh^{\bullet + d}(\hmu^{-1}(M),\bk).
\]
Since $\hmu^{-}(M)$ has an affine paving, we see that the stalks of $\cF$ vanish in degrees whose parity does not match that of $d$.  Since the same condition holds for $\D\cF$, we conclude that $\cF$ is a parity complex.

By Theorem~\ref{jmw}, $\cF$ is a direct sum of objects of the form $\cE(\cO_{\uk',\ur'},\cL')[m]$.  Since $\cF$ is supported on $\overline{\cO_{\uk,\ur}}$, any summand $\cE(\cO_{\uk',\ur'},\cL')[m]$ that occurs in $\cF$ must satisfy $\cO_{\uk',\ur'} \subset \overline{\cO_{\uk,\ur}}$.  As a consequence, if $\cF|_{\cO_{\uk,\ur}}$ is nonzero, and if $\cL$ is an irreducible summand of some cohomology sheaf $\cH^i( \cF|_{\cO_{\uk,\ur}} )$, then $\cE(\cO_{\uk,\ur},\cL)$ exists and occurs (up to shift) as a direct summand of $\cF$.

By Proposition~\ref{prop:hxkr-image}, \emph{every} indecomposable locally free $Q_B$-equivariant local system $\cL$ on $\cO_{\uk,\ur}$ occurs as a direct summand of $\cH^{-d}(\cF)$.  We conclude that every $\cE(\cO_{\uk,\ur},\cL)$ exists, and that
\[
\cE(\cO_{\uk,\ur},\cL)[d - \dim \cO_{\uk,\ur}]
\]
occurs as a direct summand of $\cF$.

Finally, we see that $\coh^i(\Gr_k(B), \cE(\cO_{\uk,\ur},\cL)[-\dim \cO_{\uk,\ur})$ is a direct summand of
\[
\coh^\bullet(\Gr_k(B),\cF[-d])
\cong \coh^\bullet(\hX(\uk,\ur), \bk),
\]
and this is even and free by Lemma~\ref{lem:hxkr-smooth}.
\end{proof}

\begin{cor}
If $\cF \in \Db_{Q_B}(\Gr_k(B),\bk)$ is an even complex, then $\coh^\bullet(\Gr_k(B),\cF)$ is even and free.
\end{cor}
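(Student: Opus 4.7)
The plan is to deduce this directly from the preceding theorem together with the classification in Theorem~\ref{jmw}. First I would invoke Theorem~\ref{jmw}(2) to write
\[
\cF \cong \bigoplus_j \cE(\cO_j,\cL_j)[m_j]
\]
for some finite collection of orbits $\cO_j$, indecomposable locally free $Q_B$-equivariant local systems $\cL_j$ on $\cO_j$, and integers $m_j$. Since $\coh^\bullet(\Gr_k(B),{-})$ commutes with finite direct sums, it suffices to treat each summand separately.

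Next, I would observe that because each $\cE(\cO_j,\cL_j)$ satisfies $\cE(\cO_j,\cL_j)|_{\cO_j}\cong\cL_j[\dim \cO_j]$, the parity sheaf $\cE(\cO_j,\cL_j)$ is itself even iff $\dim\cO_j$ is even, and more generally $\cE(\cO_j,\cL_j)[m_j]$ is even iff $m_j+\dim\cO_j$ is even. Since $\cF$ is even by hypothesis, so is every indecomposable direct summand (evenness is preserved under direct summands); hence every $m_j+\dim\cO_j$ is even.

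By the theorem just proved, $\coh^\bullet(\Gr_k(B),\cE(\cO_j,\cL_j)[-\dim\cO_j])$ is even and free, that is, $\coh^i(\Gr_k(B),\cE(\cO_j,\cL_j))$ is a finitely generated free $\bk$-module when $i\equiv\dim\cO_j\pmod 2$ and vanishes otherwise. Shifting by $m_j$ gives that $\coh^i(\Gr_k(B),\cE(\cO_j,\cL_j)[m_j])$ is free and supported in degrees $i\equiv\dim\cO_j+m_j\equiv 0\pmod 2$. Summing over $j$ yields the desired statement, with no obstacle beyond bookkeeping of parities.
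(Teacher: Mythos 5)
Your argument is correct and is precisely the intended deduction from Theorem~\ref{jmw}(2) together with the theorem just proved; the paper states the corollary without proof because the argument is exactly this bookkeeping of parities. One small point you gloss over (harmlessly): the fact that each indecomposable summand $\cE(\cO_j,\cL_j)$ is itself either purely even or purely odd, with parity matching $\dim\cO_j$, is part of the JMW formalism for parity complexes and is what allows you to conclude from evenness of $\cF$ that each $m_j+\dim\cO_j$ is even.
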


\end{document}